\newcommand{\N}{{\mathbb N}}
\newcommand{\Z}{{\mathbb Z}}
\newcommand{\R}{{\mathbb R}}
\newcommand{\I}{{\mathcal{O}}}
\newcommand{\dem}{{\em Proof: \;}}
\newcommand{\fdem}{\hfill $\square$}
\newtheorem{teo}{Theorem}[section]
\newtheorem{lema}[teo]{Lemma}
\newtheorem{cor}[teo]{Corollary}
\newtheorem{prop}[teo]{Proposition}
\newtheorem*{teosn}{Theorem}
\newtheorem{defi}{Definition}[section]
\begin{document}

\title[On the Lagrange\,-\,Dirichlet converse in dimension three]{On the Lagrange\,-\,Dirichlet converse in dimension three}

\author{J. M. Burgos}
\address{Departamento de Matem\'aticas, CINVESTAV-\,CONACYT, Av. Instituto Polit\'ecnico Nacional 2508, Col. San Pedro Zacatenco, 07360 Ciudad de M\'exico, M\'exico.}
\email{burgos@math.cinvestav.mx}
\thanks{The first author is a research fellow at \textit{Consejo Nacional de Ciencia y Tecnolog\'ia}.}

\author{M. Paternain}
\address{Centro de Matem\'atica, Facultad de Ciencias, Universidad de la Rep\'ublica, Igu\'a 4225, 11400, Montevideo, Uruguay.}
\email{miguel@cmat.edu.uy}
\thanks{The second author was supported by the FCE\,-\,ANII\,-\,135352 grant.}

\begin{abstract}
Consider a mechanical system with a real analytic potential. We prove that in dimension three, there is an open and dense subset of the set of non strict local minimums of the potential such that every one of its points is a Lyapunov unstable equilibrium point.
\end{abstract}



\subjclass[2020]{37J25, 70H14, 70K20, 70K50}

\keywords{Lagrangian-Dirichlet converse, Lyapunov instability, Lagrangian dynamics, Logarithmic vector field}
\maketitle


\tableofcontents

\newpage
\section{Introduction}

This paper is about instability in mechanical systems. We start by recalling the standard concepts and notation. Let $M$ be a smooth manifold of dimension $n$ and let $L:TM\rightarrow\R$ be a Lagrangian defined on the tangent bundle $TM$ of the form
\begin{equation}\label{Lagrangian0}
L(x,v)= Q(x,v)-U(x),\qquad (x,v)\in TM.
\end{equation}

Here, $\left(x\mapsto Q_x\right)$ is a smooth section of positive definite quadratic forms, the \textit{kinetic term}, where we have defined $Q_x(v)=Q(x,v)$ and the potential $U$ is a smooth function on $M$. The resulting motion equations from a variational argument are the \textit{Euler-Lagrange equations}
$$\dfrac{d}{dt}\dfrac{\partial L}{\partial v_i}\left(x(t),\,\dot{x}(t)\right)\,-\, \dfrac{\partial L}{\partial x_i}\left(x(t),\,\dot{x}(t)\right)\,=\,0,\qquad
i=1,\ldots, n.$$

The Euler-Lagrange equations govern the dynamics of a \textit{mechanical system} in the \textit{phase space} $TM$. These equations show that critical points of the potential in the \textit{configuration space} $M$ correspond to equilibrium points of the system in the phase space. Specifically, if $p$ in $M$ is a critical point of the potential, then $(p,{\bf 0})$ in $TM$ is an equilibrium point of the dynamics.

An equilibrium point $(p,{\bf 0})$ in the phase space is \textit{Lyapunov stable} if for every neighbourhood $W$ of the point in the phase space, there is a smaller neighbourhood $W'\subset W$ of the point in the phase space such that every orbit starting at $W'$ do not leave $W$ in future time. An equilibrium is \textit{Lyapunov unstable} if it is not Lyapunov stable.

We will abuse of notation by saying that a point $p$ in the configuration space is an equilibrium point whenever the point $(p,{\bf 0})$ in the phase space is so and $p$ will inherit the properties of $(p,{\bf 0})$ as well. For example, we will say that $p$ is a Lyapunov stable equilibrium point if $(p,{\bf 0})$ is so.

The following theorem stated by J. L. Lagrange in \cite{Lagrange} and proved by L. G. Dirichlet in \cite{Dirichlet}, gives a sufficient condition on the critical point of the potential for the corresponding equilibrium point to be Lyapunov stable.
\begin{teosn}[J. L. Lagrange, L. G. Dirichlet]
Every strict local minimum of the potential is a Lyapunov stable equilibrium point.
\end{teosn}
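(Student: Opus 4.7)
The plan is to exhibit the total mechanical energy as a Lyapunov function for the equilibrium $(p,\mathbf{0})$. First I would normalize by setting $U(p)=0$ (adding a constant does not affect the Euler\,-\,Lagrange equations) and define the total energy $E:TM\to\R$ by $E(x,v)=Q_x(v)+U(x)$. A direct computation using the Euler\,-\,Lagrange equations, or invariance of $L$ under time translation, shows that $E$ is constant along every solution curve. Hence $E$ is a first integral of the dynamics.

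Next I would extract the positive\,-\,definite character of $E$ near the equilibrium from the strictness of the minimum. Working in a chart around $p$, choose a small closed ball $\overline{B}$ centered at $p$ on which $U>0$ away from $p$. By compactness of the sphere $\partial B$ together with strictness, there is a constant $c>0$ with $U\geq c$ on $\partial B$. Since $x\mapsto Q_x$ is a smooth section of positive definite forms, the function $E$ is continuous, non\,-\,negative on $\overline{B}\times TM|_{\overline{B}}$, vanishes only at $(p,\mathbf{0})$, and satisfies $E\geq c$ on the ``wall'' $\partial B\times TM|_{\partial B}$.

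Finally, given any phase\,-\,space neighborhood $W$ of $(p,\mathbf{0})$, I would shrink $B$ and further restrict the velocity by $Q_x(v)<\epsilon$ with $\epsilon<c$, producing a product neighborhood $W_0\subset W$. Then I would take
$$W'\;=\;\bigl\{(x,v)\in W_0\,:\,E(x,v)<\epsilon\bigr\},$$
which is open and contains $(p,\mathbf{0})$ by continuity of $E$ and the fact that $E(p,\mathbf{0})=0$. Any orbit starting in $W'$ preserves the bound $E<\epsilon<c$ for all future time, so it can never reach $\partial B$ (that would force $U\geq c$, contradicting $E<c$) and its velocity remains controlled by $Q_x(v)\leq E<\epsilon$. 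Hence the orbit stays in $W_0\subset W$, establishing Lyapunov stability.

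The whole argument is just the standard Lyapunov function method, with $E$ playing the role of the Lyapunov function. There is no real obstacle; the only point that needs a little care is verifying that the sublevel set $\{E<\epsilon\}$ genuinely forms a small neighborhood of $(p,\mathbf{0})$, which reduces to the observation that $E$ attains its minimum value $0$ exactly at the equilibrium — a consequence of strictness of the minimum of $U$ together with positive definiteness of $Q$.
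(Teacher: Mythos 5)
Your proof is correct, and it is the classical energy\,-\,conservation argument, which is essentially Dirichlet's original approach. The paper does not actually give a proof of this theorem; it states it with attributions to Lagrange \cite{Lagrange} and Dirichlet \cite{Dirichlet} as background for the converse problem, so there is nothing in the paper to compare against. Your reasoning is sound: conservation of $E=Q_x(v)+U(x)$, positive definiteness of $Q$, and strictness of the minimum of $U$ together force sublevel sets of $E$ near $(p,\mathbf{0})$ to be trapping; the only notational slip is writing ``$\overline{B}\times TM|_{\overline{B}}$'' and ``$\partial B\times TM|_{\partial B}$'' where you mean simply the restriction $TM|_{\overline{B}}$ and $TM|_{\partial B}$ of the tangent bundle, but the intended meaning is clear and the argument goes through.
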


In (\cite{Lyapunov}, Section 25), A. M. Lyapunov asks for the first time whether the converse of the previous theorem holds\footnote{The \textit{force-function} is the potential function, the opposite of the potential energy $U$.}:
\begin{quote}
\textit{``But, in establishing that this condition is sufficient, the theorem in question does
not allow any conclusion about the necessity of the same condition.
That is why the question arises: will the position of equilibrium be unstable if
the force\,-\,function is not maximum?"}
\end{quote}

The Lagrange\,-\,Dirichlet converse is false in the class of smooth potentials. The first example of this phenomenon was given by P. Painlev\'e in 1904 and the following is the A. Wintner's version \cite{Wi} of it: Consider the one degree of freedom Newtonian system with the smooth potential
$$U(x)= \exp(-x^{-2})\cos(x^{-1}),\,\ x\neq 0,\quad U(0)=0.$$
The origin is a critical point and it is not a local minimum. For every neighborhood of the origin, there is an interval centered at it and contained in the neighborhood such that the potential is maximum and strictly greater than zero on the interval's boundary. Therefore, for every neighborhood of the origin, every motion with small enough energy is trapped in an interval contained in the neighborhood hence the origin is Lyapunov stable and a counterexample of the Lagrange\,-\,Dirichlet converse.

A more striking example was proposed by M. Laloy in \cite{Laloy1}. Consider the two degrees of freedom Newtonian system with the smooth potential
$$U(x,y)= \exp(-x^{-2})\cos(x^{-1})- \exp(-y^{-2})\left(\cos(y^{-1}) + y^{2}\right),\,\ x,\,y\neq 0$$
and defined by its unique smooth extension on the coordinate axes. The origin is a critical point and it is not a local minimum. In contrast with the previous example, now there are no trapping zones since the set $U^{-1}\left((-\infty, 0]\right)$ contains the two diagonals $x=\pm y$ and these are escape routes to infinity where a priori any motion starting near the origin could take. However, the projections of the motion on the respective coordinates are decoupled trapped motions as in the first example hence the origin is again Lyapunov stable. This constitutes another counterexample of the Lagrange\,-\,Dirichlet converse, now without trapping zones.

The stability of equilibrium points in the class of smooth mechanical systems is subtler than one naively may think as the \textit{Bertotti-Bolotin phenomenon} shows (\cite{BB}, Theorem 1):

\begin{teosn}[M. L. Bertotti, S. V. Bolotin]
For two degrees of freedom, there exist a smooth potential $U$ and two smooth kinetic terms $Q$ and $\tilde{Q}$ such that the equilibrium ${\bf 0}$ is stable for the system $L=Q-U$ and unstable for the system $\tilde{L}=\tilde{Q}-U$.
\end{teosn}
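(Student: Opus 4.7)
The plan is to exhibit a single potential $U$ on $\R^{2}$ whose origin is a critical point that fails to be a strict local minimum, together with two Riemannian metrics $Q,\tilde Q$ yielding opposite stability verdicts at the origin. The conceptual point is that in one degree of freedom any positive-definite kinetic term is equivalent to the standard one near the equilibrium by a smooth coordinate change, so stability is determined by $U$ alone; in two degrees of freedom, the metric interacts nontrivially with whatever structure of $U$ is responsible for stability, and one can arrange for that structure to be visible to $Q$ but invisible to $\tilde Q$.

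A natural candidate for $U$ is the Laloy potential $U(x,y)=f(x)+g(y)$ recalled in the excerpt, with $f,g$ of Painlev\'e--Wintner type so that each one-dimensional sub-system admits nested trapping intervals accumulating at~$0$. For the first kinetic term one picks the flat metric $Q(v)=\tfrac{1}{2}(\dot x^{2}+\dot y^{2})$: since $Q$ respects the decomposition $U=f+g$, the Euler--Lagrange equations of $L=Q-U$ split into two decoupled one-dimensional mechanical systems, each Lyapunov stable at the origin by Painlev\'e--Wintner, and Lyapunov stability of the origin for the full system follows as in~\cite{Laloy1}.

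For the second kinetic term, one breaks the separation by a coupling
\[
\tilde Q(v) \;=\; \tfrac{1}{2}\bigl(\dot x^{2}+\dot y^{2}\bigr) \;+\; \alpha(x,y)\,\dot x\,\dot y,
\]
with $\alpha$ smooth and $|\alpha|<1$ so that $\tilde Q$ is positive definite. One arranges $\alpha$ so that $\{\alpha\neq 0\}$ meets the diagonal tongues of $\{U<0\}$ (which exist since $U(x,\pm x)=-\exp(-x^{-2})\,x^{2}<0$ for $x\neq 0$); the idea is that along a diagonal tongue the energy identity $\tilde Q = E-U$ forces the speed to grow as $-U$ grows, and the coupling term routes this surplus kinetic energy into motion transverse to the tongue. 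One then invokes Chetaev's theorem: exhibit an open wedge $\Omega$ adherent to the origin along a diagonal tongue, together with an auxiliary function $V$ vanishing on $\partial\Omega$ and at $(0,0)$, strictly positive on $\Omega$, and satisfying $\dot V>0$ along the $\tilde L$-flow in $\Omega$.

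The main obstacle is this instability half. The coupling $\alpha$ must be sharp enough that the decoupling mechanism breaks in \emph{every} neighbourhood of the origin, yet concrete enough that one can verify $\dot V>0$ explicitly. Because $\{U<0\}$ is a pair of very thin tongues flat-tangent to the diagonals, one needs sharp asymptotics for the $\tilde L$-trajectories entering them, which will be obtained by treating the coupled system as a singular perturbation of the separable $Q$-flow and applying Gronwall-type bounds on the coupling term. Once that control is in place, the construction of $V$ and the verification of its Chetaev properties reduce to routine bookkeeping.
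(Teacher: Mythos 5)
The paper does not prove this statement; it merely records it as the Bertotti--Bolotin phenomenon, cited from \cite{BB} (Theorem~1). So there is no in-paper argument to compare your proposal against, and I can only assess your proposal on its own terms.

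The stability half of your argument is sound: for a separable potential $U(x,y)=f(x)+g(y)$ of Laloy type and the flat kinetic term $Q=\tfrac{1}{2}(\dot x^{2}+\dot y^{2})$, the Euler--Lagrange flow decouples into two one-dimensional Painlev\'e--Wintner trapped systems, and Lyapunov stability of the origin follows exactly as in \cite{Laloy1}. The instability half, however, is a genuine gap, and you flag it yourself. The entire content of the theorem is that some position-dependent (or even constant, as in \cite{GT2}) coupling $\alpha\,\dot x\,\dot y$ destroys stability for \emph{the same} potential, and that is precisely the step your proposal does not carry out. The step is not ``routine bookkeeping'': the set $\{U<0\}$ consists of tongues that are flat-tangent to the diagonals (their width vanishes faster than any power near the origin), so a Chetaev function $V$ supported in a wedge along a diagonal has to be constructed and its derivative estimated with asymptotics that match this flatness, and a naive Gronwall comparison with the uncoupled flow does not by itself show that trajectories actually leave every neighbourhood. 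Indeed, for the flat metric the diagonal tongues are already present and the system is nevertheless stable; so the tongues alone are not an escape mechanism, and one must show quantitatively that the chosen $\alpha$ feeds kinetic energy transversally fast enough to overcome the Painlev\'e--Wintner trapping. Without that estimate, or an alternative route such as the variational construction of asymptotic motions used in \cite{BB}, the proposal remains a plausible plan rather than a proof.

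One smaller caveat: your observation that in one degree of freedom the kinetic term can be normalized away is correct, but the substitution $\xi=\int_{0}^{x}\sqrt{a}$ also transforms the potential; it preserves the qualitative trapping structure near the equilibrium (which is what you need), so the remark stands, but it should be phrased as a reparametrization of the configuration coordinate rather than as the metric being literally standard with $U$ unchanged.
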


In \cite{GT2}, M. V. P. Garcia and F. A. Tal gave an explicit example of the Bertotti-Bolotin phenomenon. In their example, the kinetic terms are independent of the configuration space coordinate. They also study the influence of the kinetic term in the real analytic class of mechanical systems and this influence is further studied by them and R. B. Bortolatto in \cite{BGT}. In the real analytic class they show the very interesting fact that although the instability nature of the equilibrium point is preserved for distinct real analytic kinetic terms, the dimension of the stable manifold containing the asymptotic orbits to the point may change.

Another result that clearly shows the subtlety of the stability problem in the class of smooth mechanical systems is \textit{Ureña's example} (\cite{Urena}, Theorem 1.1). Before stating this result, let us explain its importance.

It is well known since the work of P. Hagedorn in \cite{Ha} that a local strict maximum of a smooth potential is Lyapunov unstable for any smooth kinetic term. This result was later improved by S. D. Taliaferro in \cite{Taliaferro} for local maximums and M. Sofer in \cite{Sofer} for local maximums and generalized systems whose natural framework is Finsler geometry instead of Riemannian geometry. See also the variational approach by P. Hagedorn and J. Mawhin in \cite{Hagedorn2}.

However, it was expected that a strict maximum of a smooth potential were unstable also with respect to a stronger concept than Lyapunov instability and this is the concept of \textit{Siegel\,-\,Moser instability}: an equilibrium point in phase space is \textit{unstable in the sense of Siegel and Moser} if there is a neighbourhood of the point such that every other globally defined orbit has a point outside the neighbourhood or equivalently, if the equilibrium point is the maximal invariant subset in the neighbourhood. Note that a non isolated equilibrium point is never unstable in the sense of Siegel and Moser.

The previous naive thought about the Siegel\,-\,Moser instability of a strict maximum of a smooth potential turned out to be false as the following theorem shows:
\begin{teosn}[A. J. Ureña]
There exists a $C^\infty$ function $U:\R^2\rightarrow \R$ satisfying:
\begin{enumerate}
\item $U({\bf 0})=\max_{\R^2}\,U$.
\item $\nabla U(q)\neq {\bf 0}$ for any $q\neq{\bf 0}$.
\item There exists a sequence $T_n>0$ of positive numbers and a sequence
$$q_n:\R/\,T_n\,\Z\rightarrow\R^2$$
of nontrivial periodic solutions of $\ddot{q}=-\nabla U(q)$ such that
$$\max_{t\in\R}\,\left(\,\Vert q_n(t)\Vert+\Vert\dot{q}_n(t)\Vert\,\right)\to 0\quad\mbox{as}\quad n\to +\infty.$$
\end{enumerate}
\end{teosn}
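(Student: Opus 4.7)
The approach I would take is an explicit multiscale construction inspired by the one\,-\,dimensional Painlev\'e\,-\,Wintner potential $\exp(-x^{-2})\cos(x^{-1})$, extended to two dimensions by a small angular perturbation. Specifically, I would look for $U$ of the form $U(r,\theta)=f(r)+\varepsilon(r)g(\theta)$ in polar coordinates, where $f$ is a smooth radial function with a strict maximum at $r=0$ and a carefully oscillating profile as $r\to 0^+$, $g$ is a fixed smooth function on $S^1$, and $\varepsilon$ is smooth, nonnegative, flat at $0$, and very small compared to $|f'|$ away from the zeros of $f'$. The desired periodic orbits would live in a sequence of shrinking trap regions $\Omega_n\subset\{U\leq E_n\}$ with $E_n\nearrow U(\mathbf{0})$.

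The existence of small periodic orbits should then follow from a variational principle. On each trap region $\Omega_n$ I would minimize the Maupertuis\,-\,Jacobi action $\int\sqrt{2(E_n-U)}\,|dq|$ in a suitable nontrivial free homotopy class of loops in $\Omega_n$. Provided the infimum is attained by a nontrivial loop with $E_n-U>0$ in its interior (not a brake orbit degenerating on $\partial\Omega_n$), the minimizer reparameterizes to a nontrivial periodic solution $q_n$ of $\ddot q=-\nabla U(q)$ of energy $E_n$ and some period $T_n>0$. Item (3) then follows from $\Omega_n\to\{\mathbf{0}\}$ together with the pointwise energy bound $\|\dot q_n\|^2=2(E_n-U(q_n))\to 0$.

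The hardest step, which is the main obstacle of the argument, is simultaneously enforcing items (1), (2), and (3). Critical points of the ansatz $U$ satisfy $\varepsilon(r)g'(\theta)=0$ and $f'(r)+\varepsilon'(r)g(\theta)=0$; arranging $\varepsilon(r)>0$ for $r>0$ reduces the first to the finitely many angles where $g'(\theta)=0$, and choosing $|\varepsilon'|\ll|f'|$ away from the radial zeros of $f'$ rules out the second, provided $f$ itself has no radial critical points for $r>0$. The fundamental tension is that a strictly monotone radial $f$ produces no trapping whatsoever, so one must use the angular perturbation $\varepsilon g$ (or, if necessary, allow very mild radial critical behaviour of $f$ that is then destroyed by the angular perturbation) to manufacture the compact trap regions $\Omega_n$ supporting the periodic orbits. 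Striking this balance — creating enough trapping for variational existence without ever restoring a critical point away from the origin — is the heart of the construction and is where I would expect the most delicate real\,-\,analytic bookkeeping.
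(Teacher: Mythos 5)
The paper does not prove this theorem: it is quoted verbatim from Ureña \cite{Urena} (his Theorem 1.1), so there is no internal proof for your proposal to match.

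More to the point, your proposal contains a fatal obstruction, and it is exactly the one you half-sensed but did not resolve. You want to find periodic orbits by minimizing the Maupertuis--Jacobi length over loops in compact trap regions $\Omega_n\subset\{U\leq E_n\}$ with $E_n\nearrow U(\mathbf{0})$, and you flag ``creating enough trapping without restoring a critical point away from the origin'' as the delicate step. In fact it is impossible, and condition (2) is precisely what forbids it. Suppose $\Omega$ is a compact connected component of $\{U\leq E\}$ with nonempty interior; since $E<U(\mathbf{0})$, necessarily $\mathbf{0}\notin\Omega$. Then $U$ attains its minimum over $\Omega$; that minimum is strictly below $E$ (otherwise $U\equiv E$ on $\Omega$ and every interior point is critical), so it is attained at an interior point $p\neq\mathbf{0}$ with $\nabla U(p)=\mathbf{0}$, contradicting (2). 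Hence no compact sublevel-set component away from the origin exists, no matter how you tune $f$, $\varepsilon$, $g$. If instead you take $\Omega_n$ to be merely a compact piece of an unbounded component of $\{U\leq E_n\}$, the Jacobi minimizer in a free homotopy class has no reason to stay in $\Omega_n$: it can slide out through the artificial part of $\partial\Omega_n$ where $U<E_n$, so the minimizer is not a closed geodesic of the Jacobi metric and does not reparameterize to a solution of $\ddot q=-\nabla U(q)$. This is why condition (2) is the whole point of Ureña's theorem: it rules out the classical trapping mechanism (and, as the paper itself notes, Ureña's orbits do not encircle the origin and have $T_n\to\infty$), so his construction necessarily proceeds by a mechanism other than Jacobi-metric minimization on energetically confined regions, and any proof along the lines you sketch must break at exactly this step.
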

In Ureña's example, the periods $T_n$ diverge and the closed orbits $q_n$ do not encircle the origin. In particular, this theorem states the existence of a smooth potential with a strict maximum which is not Siegel\,-\,Moser unstable.


Although the previous example is not directly related to the Lagrange\,-\,Dirichlet converse, it shows the unexpected behaviour that a mechanical system can have in the smooth class.


In view of the previous counterexamples and the Bertotti-Bolotin phenomenon, it is natural to ask for the Lagrange\,-\,Dirichlet converse in some restricted class of potentials, for example the class of real analytic potentials. Implicitly, this is the class wherein the original Lyapunov's question was formulated since he worked exclusively with real analytic functions all over his treatise. This problem will be called the \textit{analytic Lagrange\,-\,Dirichlet converse}.

For two degrees of freedom, this problem was completely solved. Indeed, V. P. Palamodov in \cite{Pa3} and S. D. Taliaferro in \cite{Ta} showed independently the instability of a non local minimum critical point of a real analytic potential in dimension two for a Newtonian kinetic term. Based on the previous Palamodov's work, this result was generalized by V. V. Kozlov in \cite{Ko} for an arbitrary kinetic term. 
The remaining case of a non strict local minimum was treated in \cite{Laloy} by M. Laloy and K. Peiffer who showed the instability of such a point for a real analytic potential. This completes the proof of the analytic Lagrange\,-\,Dirichlet converse in dimension two.

We find the following related weaker problem posed by V. I. Arnold in the \textit{Arnold's Problems book} \cite{Arnold}:
\begin{quote}
\textit{``}\textbf{1971-4.} \textit{Prove the instability of the equilibrium $\textbf{0}$ of the analytic system $\ddot{x}=-\partial U/\partial x$ in the case where the isolated critical point $\textbf{0}$ of the potential $U$ is not a minimum."}
\end{quote}

In \cite{Br}, M. Brunella gave an alternative and very elegant proof of the Arnold's problem for two degrees of freedom. We will get back to comment about this work in a moment.

Starting from A. M. Lyapunov \cite{Lyapunov} and continuing with \cite{Cetaev}, \cite{Ha}, \cite{Hagedorn2}, \cite{Ref2}, \cite{Ref3}, \cite{Ref4}, \cite{Ref5}, \cite{Ref6}, \cite{Sofer}, \cite{Taliaferro}, \cite{Ref7}, many partial results have been given towards the solution of the Arnold's problem and their common feature is that the Lyapunov instability criteria involves the lack of a local minimum at the origin of the first nonzero jet of the potential. In particular, these results are not sufficient to solve the Arnold's problem in the general case. As an example, consider the following potential in $\R^3$,
$$U(x,y,z)\,=\,x^4+y^4-z^6.$$
Here, the origin is an isolated critical point of the potential that is not a local minimum of it. However, the origin is a minimum of the first non zero jet,
$$(j^{(4)}\,U)\,(x,y,z)\,=\,x^4+y^4$$
hence none of the mentioned results apply for the potential $U$.

In \cite{Palamodov}, V. P. Palamodov stated a \textit{total instability} result for non minimum critical points of the potential whose proof is incomplete. This total instability conjecture would imply the Arnold's problem and the impossibility of Ureña's example in the class of real analytic potentials as well. All of these remain as open problems.


The only general result on the analytic Lagrange\,-\,Dirichlet converse, in the sense that there is no other hypothesis than the real analyticity of the potential and the dimension of the configuration space, is the one previously described in dimension two. We prove for the first time the following general result on the analytic Lagrange\,-\,Dirichlet converse in dimension three.

\begin{teo}\label{main}
Consider a mechanical system on a three dimensional real analytic manifold with a real analytic potential. Then, there is an open and dense subset of the set of non strict local minimums of the potential such that every one of its points is a Lyapunov unstable equilibrium point.
\end{teo}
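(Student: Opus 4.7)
The strategy is to describe the local structure of $M_0$ near a non-strict local minimum $p$, identify an open and dense subset $G \subset M_0$ on which this structure is as tame as possible, and at each point of $G$ exhibit an explicit escape orbit.

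Fix $p \in M_0$ with $c = U(p)$. Locally $U \geq c$, so every point of the analytic level set $Z := \{U = c\}$ near $p$ is itself a local minimum and in particular a critical point of $U$. A short argument shows that $M_0$ coincides with $Z$ in a neighborhood of $p$, so $M_0$ inherits the structure of a real analytic set. Non-strictness of $p$ forces $\dim_p Z \geq 1$; excluding the degenerate case in which $U$ is locally constant on a connected component of $M$, real analyticity of $U$ forces $\dim_p Z \in \{1, 2\}$.

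Using the real analytic stratification of $Z$, define $G \subset M_0$ as the set of points $q$ where (i) $Z$ is locally a smooth submanifold through $q$ of dimension $d \in \{1, 2\}$, and (ii) the restriction of the Hessian of $U$ at $q$ to the $Q$-orthogonal complement of $T_q Z$ is positive definite. Openness of $G$ in $M_0$ is clear from the definitions; its complement is a proper analytic subset of $M_0$ (the union of the singular locus of $Z$ with the zero locus of a suitable minor of the Hessian), hence nowhere dense by standard structure theorems for real analytic sets. Therefore $G$ is open and dense in $M_0$.

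At $p \in G$, write $N := Z$ locally, a smooth submanifold of equilibria of dimension $d$. Pick $e \in T_p N$ with $Q_p(e) = 1$ and consider the orbit $x(t)$ with $x(0) = p$ and $\dot{x}(0) = \varepsilon e$. Energy conservation yields $U(x(t)) \leq \varepsilon^2/2$, and the positive definite normal Hessian confines $x(t)$ to a tube of radius $O(\varepsilon)$ about $N$. In adapted local coordinates $(y, z)$ with $N = \{z = 0\}$ and $\nabla U|_{N} = 0$, one has $|\ddot{y}(t)| = O(|z(t)|) = O(\varepsilon)$, and integrating twice gives $y(t) = \varepsilon\, e\, t + O(\varepsilon t^2)$. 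For a sufficiently small radius $r > 0$ of an ambient neighborhood $W$ of $(p, {\bf 0})$ in phase space, and for any $\varepsilon \in (0, r)$, the orbit escapes $W$ at time $t \sim r/\varepsilon$. Since $\varepsilon$ can be taken arbitrarily small, this yields Lyapunov instability of $p$.

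The principal obstacle is making the drift estimate above rigorous when the kinetic metric $Q$ couples tangential and normal coordinates and $N$ fails to be totally geodesic in $(M, Q)$, so that an initial velocity tangent to $N$ does not keep the orbit exactly on $N$. Brunella's logarithmic vector field framework from the two-dimensional case is the natural tool: vector fields tangent to the divisor $\{U = c\}$ are by construction logarithmic along it, and their Hamiltonian lift furnishes the leading-order drift direction which a Gronwall-type comparison shows is tracked by the true orbit until the escape time. The density of $G$ likewise requires a careful treatment of higher-order vanishing strata of $U - c$ along $Z$; the dimension-three hypothesis keeps the relevant stratification of dimension at most two, which is what makes this analysis tractable.
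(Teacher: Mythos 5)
Your approach differs substantially from the paper's and contains at least one fatal gap. The paper works by translating by each critical value $c$, applying Theorem~\ref{smooth_points} to stratify the zero locus of $U-c$ into regular components of dimensions $1,2,3$, constructing locally Lipschitz \emph{weakly logarithmic} vector fields in codimensions one and two via Theorems~\ref{Theorem_codim1} and~\ref{Theorem_codim2} (whose proofs require the full machinery of W\l{}odarczyk's principalization), and then invoking the instability criterion of Theorem~\ref{Instability}, whose proof rests on a delicate limit-curve argument rather than a direct drift estimate. You instead define the good set $G$ by requiring, at each smooth point $q$ of $Z$, that the restriction of the Hessian of $U$ to the $Q$-orthogonal complement of $T_qZ$ be positive definite, and you then attempt an explicit escape-time argument.

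The Hessian condition is the fatal point: it does \emph{not} cut out a dense subset of the set of non-strict minimums, so your $G$ can fail to be dense, or even be empty, on entire regular components. Take $U(x,y,z)=(x^2+y^2)^2$ on $\R^3$: the zero locus is the $z$-axis, every point on it is a non-strict local minimum, and the full Hessian of $U$ vanishes identically along the axis, so the normal Hessian is never positive definite and your $G$ is empty there. Analyticity gives you nowhere-density of the vanishing set of a \emph{fixed} Hessian minor, but the normal Hessian can vanish identically along $Z$ whenever $U$ vanishes to order~$\geq 3$ transversally, which is the generic situation for non-strict minima you must address. This is exactly why the paper cannot use a Morse--Bott--type condition and instead needs the weakly logarithmic field construction, which handles arbitrary order of vanishing after principalization.

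A second, independent problem is the drift estimate. Even granting a positive-definite normal Hessian, the bound $|\ddot y(t)|=O(\varepsilon)$ only gives $\dot y(t)=\dot y(0)+O(\varepsilon t)$, and over the putative escape time $t\sim r/\varepsilon$ this error term is $O(r)$, comparable to the displacement you are trying to produce; the estimate does not close without a more careful cancellation argument. The paper sidesteps this entirely: it rescales time, extracts a uniform limit $x$ of the rescaled orbits (Corollary~\ref{Cor3}), forms the holonomic functional $y_n(t)=\int_0^t\langle V(x_{\varepsilon_n}),\dot x_{\varepsilon_n}\rangle$, and uses the weakly logarithmic property to show the $\dot y_n$ are uniformly Lipschitz (Lemma~\ref{Lema3}), forcing a $C^1$ limit with $\dot y(0)=\|V(p)\|^2\neq 0$ and hence a non-constant limit curve (Proposition~\ref{prop_instability}). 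Your proposal acknowledges the logarithmic-field machinery as ``the natural tool'' but treats it as a side remark, when in the paper the construction of such a field in codimension two --- Lemma~\ref{limpiando_la_cancha} through Lemma~\ref{Induction_downstairs} --- is the technical core of the whole argument.
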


For the proof, we develop a new instability criterion for non strict local minimums of a smooth potential in arbitrary dimension. This is proved in section \ref{instability_criterion} and generalizes previous work by the authors in \cite{BMP} and \cite{BP}. Besides the work by M. Laloy and K. Peiffer in \cite{Laloy} in dimension two, as far as our knowledge goes there is no other instability criterion for non strict local minimums of the potential. 
For this reason, we consider that the mentioned new instability criterion has interest on its own, specially in mechanics.

Because of the fact that the instability criterion does not involve the kinetic term, none of the unstable equilibrium points described in Theorem \ref{main} exhibit the Bertotti\,-\,Bolotin phenomenon, that is to say they are Lyapunov unstable in the Lagrangian dynamics of \eqref{Lagrangian0} with an arbitrary kinetic term.

The instability criterion requires a new concept in its hypothesis. In section \ref{Weakly_log_section} we introduce the notion \textit{weakly logarithmic vector fields} and motivate the necessity of the definition through examples. We show that the direct analog of the known logarithmic vector fields in complex analytic geometry is too rigid and restrictive in real analytic geometry. We justify the new concept by proving a local extension theorem for codimension two real analytic submanifolds that is unimaginable in complex analytic geometry or even in real algebraic geometry using the direct analog of logarithmic fields.
Again, for this reasons we consider that the new concept of weakly logarithmic vector fields introduced here as well as the results concerning it, have interest on its own, specially in real algebraic and analytic geometry.


As it was mentioned before, using desingularization results for singular vector fields in three dimensional space developed by F. Cano in \cite{Cano1} and \cite{Cano2}, in \cite{Br} M. Brunella gives sufficient conditions for a real analytic vector field with an isolated singularity in three dimensional space to be unstable at the singular point. Applying his result to an appropriate energy level of a hamiltonian vector field in four dimensional space, he solves the Arnold's conjecture for two degrees of freedom. This way, Brunella's result applied to hamiltonian vector fields is about instability in two dimensional configuration space while ours concerns instability in three dimensional configuration space.

We strongly use desingularization techniques in our proof. Specifically, we use the locally finite functorial principalization theorem for real and complex analytic spaces proved by J. W\l{}odarczyk in \cite{Wlodarczyk} building upon previous work of H. Hironaka \cite{Hironaka}, O. Villamayor  \cite{Villamayor1}, \cite{Villamayor2}, E. Bierstone and P. D. Milman \cite{Bierstone_Milman}. Along the well known blowing up technique of vector fields, we develop a blowing down result specially suited for our purposes for vector fields in arbitrary dimensions that we also believe that it has interest in its own, specially in the desingularization theory of vector fields.

Although we do not use any of his results, our algorithm for the construction of a weakly logarithm field is inspired in the idea of V. P. Palamodov in \cite{Palamodov}. Essentially, his idea consists in adding corrections to a given vector field along each step of a principalization of the potential and then coming back to the zeroth step with the corrected vector field satisfying the desired logarithmic properties.

We do not know whether there is a stable non strict local minimum point of a real analytic potential in the Lagrangian dynamics of \eqref{Lagrangian0}. However, as an immediate corollary of the mentioned instability criterion, such a point cannot admit a weakly logarithmic vector field non null at the point. On the other hand, there are unstable non strict local minimum points that do not admit non null weakly vector fields, that is to say the criterion is sufficient but not necessary. The moral is that non strict local minimum points not admitting non null weakly logarithmic fields are natural candidates for counterexamples of the analytic Lagrange\,-\,Dirichlet problem.

\section{Weakly logarithmic vector fields}\label{Weakly_log_section}

\subsection{Preliminaries on real analytic geometry}\label{preliminaries_real_analytic}

In this preliminary section, we will follow closely the notes of D. B. Massey and L. D. Tr\'ang \cite{Trang_Massey}, specially sections $5$ and $6$. For details we refer to these notes and references therein.
A real analytic manifold is a manifold with an atlas whose transition functions are real analytic. A function between real analytic manifolds is real analytic if its expression with respect to any pair of coordinate neighbourhoods of the respective atlases is so. The set of critical points of a function $f$ will be denoted by $\Sigma f$. Unless explicitly specified, in this subsection $M$ will denote a real analytic manifold \footnote{In \cite{Trang_Massey} it is assumed that the manifold $M$ is also connected. However, a careful inspection shows that we only need that hypothesis in the analytic continuation principle, Theorem \ref{analytic_continuation}.}.

Two real analytic functions $f$ and $g$ define the same \textit{germ} at a point if there is a neighbourhood $A$ of the point whereat their restrcictions are equal, $f|_A=g|_A$. Two subsets $S$ and $T$ of $M$ define the same germ at a point if there is a neighbourhood $A$ of the point such that $S\cap A=T\cap A$. We will denote the germ of the subset $S$ at $p$ simply by $S_p$.

The ring of real valued real analytic functions on $M$ will be denoted by $\I_M$ and the ring of germs at the point $p$ in $M$ will be denoted by $\I_{M,\,p}$.

\begin{teo}[Theorem 5.2, \cite{Trang_Massey}]
The ring $\I_{M,p}$ is a Noetherian unique factorization domain (UFD). The ring $\I_M$ is an integral domain, but need not be Noetherian or a UFD.
\end{teo}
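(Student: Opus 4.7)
The plan splits into three parts matching the three assertions of the theorem.

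\textbf{Noetherian and UFD for $\I_{M,p}$.} I would first localize: any analytic chart centered at $p$ induces a canonical isomorphism $\I_{M,p} \cong \R\{x_1,\ldots,x_n\}$, the ring of convergent power series in $n = \dim M$ variables over $\R$. The statement thus becomes intrinsic to $\R\{x_1,\ldots,x_n\}$, and I would proceed by induction on $n$; the base cases $n=0$ (the field $\R$) and $n=1$ (the discrete valuation ring $\R\{x_1\}$) are trivially Noetherian UFDs. The inductive step rests on the Weierstrass preparation and division theorems: after a generic linear change of coordinates, any nonzero $f \in \R\{x_1,\ldots,x_n\}$ is regular of some order $d$ in $x_n$, and preparation writes $f = u \cdot P$ with $u$ a unit and $P \in \R\{x_1,\ldots,x_{n-1}\}[x_n]$ a distinguished (Weierstrass) polynomial of degree $d$. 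For Noetherianity, the division theorem reduces any ideal in $\R\{x_1,\ldots,x_n\}$ to an ideal in the polynomial ring over the inductively Noetherian $\R\{x_1,\ldots,x_{n-1}\}$, where finite generation follows from the Hilbert basis theorem. For the UFD property, Gauss' lemma together with the fact that polynomial extensions preserve the UFD property lifts the inductive hypothesis on $\R\{x_1,\ldots,x_{n-1}\}$ to factorization of Weierstrass polynomials, and then through units to all of $\R\{x_1,\ldots,x_n\}$.

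\textbf{Integral domain property of $\I_M$.} For this step I would invoke the identity principle on the connected manifold $M$. If $f, g \in \I_M$ satisfy $fg \equiv 0$ and $f \not\equiv 0$, then the open set $U = \{x \in M : f(x) \neq 0\}$ is nonempty and $g$ vanishes identically on $U$; by analytic continuation (Theorem~\ref{analytic_continuation}), $g \equiv 0$ on all of $M$. This is the only place where the connectedness hypothesis singled out in the footnote is used.

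\textbf{Counterexamples on $M = \R$.} For the failure of Noetherianity I would consider the ideals $I_n = \{f \in \I_\R : f(k) = 0 \text{ for all integers } k \geq n\}$, which form an ascending chain $I_1 \subseteq I_2 \subseteq \cdots$. The Weierstrass product theorem for entire real analytic functions (which allows construction of a function vanishing exactly on any prescribed discrete subset of $\R$) yields an element of $I_{n+1} \setminus I_n$, so the chain is strictly ascending and does not stabilize. For the failure of the UFD property, I would first show that every irreducible $p \in \I_\R$ has only finitely many zeros: if $a \in \R$ is a zero of $p$ of order $m \geq 1$, then $p = (x-a)^m \cdot g$ in $\I_\R$ with $g(a) \neq 0$; irreducibility forces $g$ to be a unit, so $g$ is nowhere zero and $Z(p) = \{a\}$. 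Therefore any finite product of irreducibles in $\I_\R$ has only finitely many zeros, while $\sin(\pi x) \in \I_\R$ vanishes on all of $\Z$; hence $\sin(\pi x)$ admits no factorization into finitely many irreducibles, and $\I_\R$ is not a UFD.

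\textbf{Main obstacle.} The real technical weight sits in the first part: the Weierstrass preparation and division theorems require nontrivial convergence estimates for the division algorithm in $\R\{x_1,\ldots,x_n\}$. I would cite these as classical facts of local analytic geometry rather than reproduce them; once they are available, the inductive arguments for Noetherianity and the UFD property are essentially formal and parallel the familiar complex-analytic proofs. The global assertions and the counterexamples are comparatively short.
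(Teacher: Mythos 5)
Your proposal is correct, but note that the paper does not prove this statement at all: it is imported verbatim as Theorem 5.2 of the cited notes of Massey and Tr\'ang, so there is no internal argument to compare against. What you have written is a sound reconstruction of the classical proof — reduction of $\I_{M,p}$ to $\R\{x_1,\ldots,x_n\}$ and induction via Weierstrass preparation/division (the R\"uckert basis theorem plus Gauss' lemma), the identity principle for the integral domain claim (correctly flagging that this is where connectedness of $M$ enters), and the standard counterexamples on $M=\R$, where your observation that an irreducible element of $\I_\R$ has a single simple zero cleanly rules out a finite irreducible factorization of $\sin(\pi x)$. All steps check out; the only content you defer, the convergence estimates behind Weierstrass division, is appropriately cited rather than reproved.
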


The following theorem is known as the \textit{analytic continuation principle}.

\begin{teo}[Theorem 5.1, \cite{Trang_Massey}]\label{analytic_continuation}
Suppose that $M$ is connected. Consider a point $p$ in $M$ and a real analytic function $f$ in $\I_M$. If the germ of $f$ at $p$ is zero, then the function $f$ is identically zero.
\end{teo}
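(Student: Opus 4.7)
The plan is to use the standard clopen argument that underlies all identity theorems for analytic objects. Define
\[
Z \;=\; \{\,q \in M \,:\, f \text{ has zero germ at } q\,\}.
\]
Since $M$ is connected and nonempty and $p \in Z$, it suffices to prove that $Z$ is both open and closed in $M$; then $Z = M$ and $f$ vanishes identically.

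Openness of $Z$ is essentially tautological and does not use real analyticity. If $q \in Z$, there exists an open neighbourhood $A$ of $q$ with $f|_A = 0$; then for every $q' \in A$ the same $A$ witnesses $f|_A = 0$, so $q' \in Z$ and $A \subset Z$.

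The heart of the argument is closedness of $Z$, and here real analyticity is essential. I would take $q$ in the closure of $Z$, work in a coordinate chart $(V,\varphi)$ around $q$ on which $f$ is given by a convergent power series expansion around $\varphi(q)$ on some polydisc $D$. Since $q$ is a limit of points of $Z$, the open set $Z \cap V$ is nonempty; pick $q_0 \in Z \cap V$. All partial derivatives of $f$ of all orders vanish on a neighbourhood of $q_0$, hence in particular vanish at $q_0$. Now I would use that a real analytic function on a connected open subset of $\mathbb{R}^n$ is determined by its infinite jet at any point: the jet of $f$ at $q_0$ is zero, so $f \equiv 0$ on the connected component of the convergence domain containing $q_0$. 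By shrinking the coordinate neighbourhood if necessary so that $D$ is a connected polydisc containing both $\varphi(q)$ and $\varphi(q_0)$, we conclude $f|_V = 0$, hence $q \in Z$.

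The main (and really only) subtlety is making sure one has a legitimate local version of the identity theorem in $\mathbb{R}^n$: namely, that a convergent real power series on a connected polydisc whose full Taylor series at one interior point vanishes must vanish on the whole polydisc. I would handle this by reducing to the one-variable case via segments: along any line segment inside $D$ starting at $\varphi(q_0)$, the restriction of $f$ is a real analytic function of one variable with zero Taylor expansion at the starting point, hence identically zero, and since $D$ is star-shaped with respect to $\varphi(q_0)$ (after shrinking), this forces $f \equiv 0$ on $D$. This is where connectedness of $M$ enters, since without it $q$ and $q_0$ might lie in different components and no such polydisc $D$ would exist.
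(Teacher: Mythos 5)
The paper does not prove this statement; it is cited verbatim as Theorem~5.1 of the Massey--Tr\'ang notes \cite{Trang_Massey}, so there is no in-paper argument to compare against. Taken on its own merits, your proof is correct and is the standard clopen argument for the real analytic identity theorem. Two small remarks on the writing rather than the mathematics. First, you do not need to ``shrink so that $D$ is a connected polydisc containing both $\varphi(q)$ and $\varphi(q_0)$'': a polydisc is a product of intervals, hence convex, hence automatically connected and star-shaped with respect to every one of its points; you only need to pick $q_0 \in Z$ with $\varphi(q_0) \in D$, which is possible because $q$ is a closure point of $Z$. Second, the one-variable step deserves one more sentence: the restriction of $f\circ\varphi^{-1}$ to a segment $t\mapsto \varphi(q_0)+t u$ is real analytic on an interval, has all derivatives zero at $t=0$ (chain rule from the vanishing jet of $f$ at $q_0$), and a one-variable real analytic function with vanishing jet at an interior point of a connected interval is identically zero --- itself a clopen argument, but a genuinely lower-dimensional one, so there is no circularity. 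With those clarifications the argument is airtight, and connectedness of $M$ is used exactly where you say it is, to conclude $Z=M$.
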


\begin{defi}
For any set $S$ of real analytic functions, define its zero locus by
$$\mathcal{V}(S)=\{\,x\in M\ |\ f(x)=0,\,\forall\,f\in S\,\}.$$
\end{defi}

\begin{defi}
A subset $X\subset M$ is analytic if it is closed and locally it is the zero locus of some finite set of real analytic functions, that is to say, for every point $x$ in $X$ there is a neighbourhood $W$ of $x$ in $M$ and there are real analytic functions $f_1,\,f_2,\ldots,\, f_j$ in $\I_W$ such that
$$W\cap X=\mathcal{V}(f_1,\,f_2,\ldots,\,f_j).$$

A subset $E\subset M$ is locally analytic if, for all $p$ in $E$, there exists an open neighborhood $W$
of $p$ in $M$ such that $W\cap E$ is an analytic subset of $W$.
\end{defi}

There are variants of the previous definition in the literature, see for instance \cite{Trang_Massey}, page $12$. There are analytic subsets not defined as the zero locus of globally defined analytic functions in $M$, see the striking example given by H. Cartan in (\cite{Cartan}, section 11). Note that locally analytic subsets need not to be closed.

\begin{teo}[Theorem 5.7, \cite{Trang_Massey}]
The intersection of analytic subsets is analytic.
\end{teo}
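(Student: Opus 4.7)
The plan is to check the two conditions defining an analytic subset, namely closedness and local definability by finitely many analytic functions, for the intersection $X=\bigcap_\alpha X_\alpha$. Closedness is automatic since each $X_\alpha$ is closed. For the local condition, fix $p\in M$. If $p\notin X$, then some $X_{\alpha_0}$ misses $p$, and its closedness provides a neighbourhood $W$ of $p$ disjoint from $X$, in which $X\cap W=\emptyset=\mathcal{V}(1)$. The substantive case is $p\in X$.

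For each $\alpha$, I would choose a neighbourhood $W_\alpha$ of $p$ and analytic functions $f_{\alpha,1},\ldots,f_{\alpha,j_\alpha}\in\I_{W_\alpha}$ realizing $W_\alpha\cap X_\alpha=\mathcal{V}(f_{\alpha,1},\ldots,f_{\alpha,j_\alpha})$, and let $I_{\alpha,p}\subset\I_{M,p}$ denote the ideal at $p$ generated by their germs. The first key step is to reduce the possibly infinite collection to a finite one at the level of germs. Set
$$J_p\;=\;\sum_\alpha I_{\alpha,p}\;\subset\;\I_{M,p}.$$
By Noetherianity of $\I_{M,p}$ (Theorem 5.2), the ascending chain of partial sums $I_{\alpha_1,p}\subset I_{\alpha_1,p}+I_{\alpha_2,p}\subset\cdots$ must stabilize, yielding finitely many indices $\alpha_1,\ldots,\alpha_m$ with $J_p=I_{\alpha_1,p}+\cdots+I_{\alpha_m,p}$. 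Set $W=\bigcap_{i=1}^m W_{\alpha_i}$ and let $g_1,\ldots,g_N$ be the concatenation of the defining functions of $X_{\alpha_1},\ldots,X_{\alpha_m}$ on $W$. These generate $J_p$, and the inclusion $X\cap W\subset\mathcal{V}(g_1,\ldots,g_N)\cap W=X_{\alpha_1}\cap\cdots\cap X_{\alpha_m}\cap W$ is immediate.

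The main obstacle is the reverse inclusion on some smaller neighbourhood: every common zero of $g_1,\ldots,g_N$ sufficiently close to $p$ must lie in $X_\beta$ for \emph{every} index $\beta$, not only for $\beta\in\{\alpha_1,\ldots,\alpha_m\}$. Pointwise at $p$, the containment $I_{\beta,p}\subset J_p$ provides a representation $f_{\beta,k}=\sum_l h_{\beta,k,l}\,g_l$ of each defining function of $X_\beta$, valid on a neighbourhood of $p$ which a priori depends on $\beta$. My approach to producing a single, uniform neighbourhood is to invoke coherence of the analytic ideal sheaves attached to the $X_\alpha$: the ideal sheaf $\mathcal{G}=(g_1,\ldots,g_N)$ on $M$ is coherent and contains every germ $I_{\beta,p}$, so a standard semi-continuity argument for coherent subsheaves yields a single neighbourhood $W'\subset W$ on which the inclusions $I_{\beta,q}\subset\mathcal{G}_q$ hold simultaneously for every $q\in W'$ and every $\beta$; from this, $\mathcal{V}(g_1,\ldots,g_N)\cap W'\subset X_\beta$ for every $\beta$ and the proof concludes. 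I expect this upgrade from the pointwise Noetherian reduction at $p$ to a genuine neighbourhood of uniform validity to be the hard step of the argument; Noetherianity alone handles the finite reduction, while the sheaf-theoretic coherence does the real work in transporting the conclusion off the single point.
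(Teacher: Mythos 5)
Your Noetherian reduction at $p$ is correct and is indeed the right first step: by Noetherianity of $\I_{M,p}$ the sum $J_p=\sum_\alpha I_{\alpha,p}$ is achieved by finitely many indices, and the resulting $g_1,\ldots,g_N$ give $X\cap W\subset\mathcal{V}(g_1,\ldots,g_N)\cap W$. The gap is exactly where you flag the difficulty but then dismiss it: the appeal to ``a standard semi-continuity argument for coherent subsheaves'' to obtain a \emph{single} neighbourhood $W'$ on which $I_{\beta,q}\subset\mathcal{G}_q$ for \emph{every} $\beta$. What coherence gives you is that, for each \emph{fixed} $\beta$, the set $U_\beta=\{q:\,I_{\beta,q}\subset\mathcal{G}_q\}$ is open and contains $p$. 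It does not give any lower bound on the size of $U_\beta$ as $\beta$ varies; with infinitely many $\beta$'s, $\bigcap_\beta U_\beta$ may well fail to be a neighbourhood of $p$. There is no standard coherence principle asserting otherwise, and in fact the ideal-level statement you want at nearby $q$ is stronger than what the theorem requires and I do not see a reason it should hold uniformly.

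What actually rescues the argument is not semi-continuity of stalk inclusions but analytic continuation on the pieces of $Y=\mathcal{V}(g_1,\ldots,g_N)$ near $p$ — essentially the content of Theorem 5.24 in the same notes (quoted in the paper as Theorem \ref{fully_contained}). Shrink $W$ so that $Y\cap W$ is the union of finitely many components $C_1,\ldots,C_r$ accumulating at $p$, each a connected, pure-dimensional, locally analytic set with $C_{i,x}$ irreducible for all $x\in C_i$. For each $\beta$ one has $(Y)_p\subset(X_\beta)_p$, hence $(C_i)_p\subset(X_\beta)_p$ for each $i$; by Theorem \ref{fully_contained}, $C_i\subset X_\beta$ \emph{globally} on that component, so $Y\cap W\subset X_\beta$ on the single, $\beta$-independent $W$. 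The uniformity is supplied by the unbounded conclusion of the irreducibility/analytic-continuation step, not by openness of a stalk condition. Your proof is salvageable by replacing the coherence claim with this decomposition-and-propagation step, but as written that step is a genuine gap.
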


\begin{cor}
The analytic subsets in $M$ are the closed sets of a topology in $M$. This is the analytic Zariski topology.
\end{cor}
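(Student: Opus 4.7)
The plan is to verify that the family of analytic subsets of $M$ satisfies the three axioms for the closed sets of a topology: (i) $\emptyset$ and $M$ are analytic, (ii) finite unions of analytic subsets are analytic, and (iii) arbitrary intersections of analytic subsets are analytic. Axiom (iii) is exactly the content of the intersection theorem stated immediately before the corollary (Theorem 5.7 of \cite{Trang_Massey}), so nothing new is required for it.

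For axiom (i), observe that $\emptyset=\mathcal{V}(\{1\})$ and $M=\mathcal{V}(\emptyset)$ (or $\mathcal{V}(\{0\})$) are both closed and are the zero loci of finite families of globally defined real analytic functions, hence trivially analytic in the sense of the definition.

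For axiom (ii), it suffices by induction to show that $X_1\cup X_2$ is analytic whenever $X_1$ and $X_2$ are. Since analytic sets are closed by definition, the finite union $X_1\cup X_2$ is closed. For the local zero-locus condition, fix $p\in M$ and choose neighbourhoods $W_1,W_2$ of $p$ and finite families $f_1,\ldots,f_j\in\I_{W_1}$ and $g_1,\ldots,g_k\in\I_{W_2}$ so that $W_i\cap X_i$ is the zero locus of the corresponding family; if $p\notin X_i$, simply shrink $W_i$ to be disjoint from $X_i$ and replace the family by the single function $1$. On $W=W_1\cap W_2$ consider the product family $\{f_a\cdot g_b\}_{a,b}\subset\I_W$. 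A pointwise check gives
$$\mathcal{V}(\{f_ag_b\})\,=\,\mathcal{V}(\{f_a\})\cup\mathcal{V}(\{g_b\})\,=\,(W\cap X_1)\cup(W\cap X_2)\,=\,W\cap(X_1\cup X_2),$$
so $X_1\cup X_2$ is locally cut out by finitely many real analytic functions, as required.

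There is no substantive obstacle in this proof: everything reduces either to the previously invoked intersection theorem or to the elementary identity $\mathcal{V}(S)\cup\mathcal{V}(T)=\mathcal{V}(S\cdot T)$ applied in a common neighbourhood, together with the observation that the defining condition is purely local. The only mild subtlety is bookkeeping at points $p$ that lie in only one (or neither) of the $X_i$, which is handled by the trick of locally replacing the missing family by the constant function $1$ on a suitably small neighbourhood.
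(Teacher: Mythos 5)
Your proof is correct and fills in the routine verification that the paper leaves implicit; it uses the standard decomposition into the three closed-set axioms, handling arbitrary intersections via the theorem cited immediately before the corollary and finite unions via the local identity $\mathcal{V}(S)\cup\mathcal{V}(T)=\mathcal{V}(S\cdot T)$ on a common neighbourhood. The bookkeeping at points lying in only one $X_i$ (shrinking and substituting the constant $1$) is exactly the right care to take, since the definition only imposes the local zero-locus condition at points of the set, and closedness of the $X_i$ makes the shrinking possible.
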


Without further qualification, the terms \textit{open} and \textit{closed} will refer to the classical topology. In particular, in the next section \textbf{the principalization will be over classical open sets}.

\begin{defi}
Consider an analytic subset $X\subset M$. We will say that $X$ is irreducible if it cannot be written as the union of proper analytic subsets.
We will say that the germ $X_p$ is irreducible if it cannot be written as the union of two proper germs of analytic spaces at $p$. We will say that $X$ is irreducible at $p$ if the germ $X_p$ is irreducible.
\end{defi}

Note that every irreducible analytic subset is connected but the converse is false in general.

As an example consider $X=\mathcal{V}(y^2-x^3-x^2)\subset \R^2$. This is an irreducible analytic subset which is not irreducible at the origin. In effect, in the open unit ball $B$ we have the factorization
$$y^2-x^3-x^2\,=\,(y\,+\,x\,\sqrt{x+1})\,(y\,-\,x\,\sqrt{x+1})$$
where both factors on right hand side are analytic hence $X\cap B$ is the union of proper analytic spaces,
$$X\cap B\,=\,\mathcal{V}(y\,+\,x\,\sqrt{x+1})\,\cup\,\mathcal{V}(y\,-\,x\,\sqrt{x+1}).$$

\begin{defi}
Consider an analytic subset $X\subset M$ and a point $p$ in $X$. We will say that $p$ is a $d$-dimensional smooth point of $X$ if there is a neighbourhood $W$ of $p$ in $M$ such that $W\cap X$ is a $d$-dimensional analytic submanifold of $W$.
\end{defi}

We will denote by $\mathring{X}$ the set of smooth points of $X$ and by $\mathring{X}^{(d)}$ the set of $d$-dimensional smooth points of $X$. A point in $X$ is \textit{singular} if it is not smooth and we will denote the set of singular points of $X$ by $\Sigma X$. It is clear that we have a decomposition
$$\mathring{X}\,=\,\mathring{X}^{(0)}\sqcup\mathring{X}^{(1)}\sqcup\ldots\sqcup \mathring{X}^{(n)}.$$

\begin{teo}[Theorem 5.14, \cite{Trang_Massey}]\label{smooth_points}
Consider an analytic subset $X\subset M$. Then,
\begin{enumerate}
\item\label{open_submanifold} For every $d$, the set $\mathring{X}^{(d)}$ is an analytic $d$-dimensional submanifold in $M$ and an open set in $X$.
\item\label{open_dense} $\mathring{X}$ is an open and dense subset in $X$.
\end{enumerate}
\end{teo}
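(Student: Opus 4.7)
The plan is to prove the two assertions in turn, with (1) being essentially a matter of unpacking the definition and (2) being the substantive content.

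For (1), fix $p\in \mathring{X}^{(d)}$ and let $W$ be a neighbourhood of $p$ in $M$ such that $W\cap X$ is a $d$-dimensional analytic submanifold of $W$. Then for every $q\in W\cap X$, the same $W$ witnesses $q\in \mathring{X}^{(d)}$, so $W\cap X\subset \mathring{X}^{(d)}$; this simultaneously gives openness of $\mathring{X}^{(d)}$ in $X$ and its structure as a $d$-dimensional analytic submanifold of $M$, since the local charts are automatically compatible where they overlap.

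For (2), openness is immediate from (1) because $\mathring{X}=\bigcup_{d}\mathring{X}^{(d)}$. The real task is density. I would argue locally: fix $p\in X$ and study the germ $X_p$. Using Noetherianity of $\I_{M,p}$, decompose $X_p$ as a finite union of irreducible germs $(X_1)_p\cup \cdots \cup (X_k)_p$. By the analytic continuation principle, within any chosen representatives the set $X_i\setminus \bigcup_{j\ne i}X_j$ is open and dense in $X_i$ near $p$, and any smooth point of $X_i$ lying in that set is automatically a smooth point of $X$ of the same dimension. This reduces density to exhibiting smooth points of each irreducible germ $(X_i)_p$ arbitrarily close to $p$.

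For an irreducible germ, the standard approach is real analytic Noether normalization: after a generic linear change of coordinates centred at $p$, one obtains an integer $d$ and a projection $\pi:(x_1,\ldots,x_n)\mapsto (x_1,\ldots,x_d)$ whose restriction to $(X_i)_p$ is proper and finite-to-one, with discriminant a proper analytic subset $D\subset \R^d$. Above $\R^d\setminus D$ the cover is unbranched and its sheets are graphs of real analytic functions, hence $d$-dimensional analytic submanifolds of $M$, yielding smooth points of $X_i$ in every neighbourhood of $p$.

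The main obstacle is this last step: the real analytic structure theorem that produces the finite branched cover. The tool is the Weierstrass preparation theorem applied to suitable generators of the defining ideal of $(X_i)_p$ after a general-position change of coordinates; the cleanest route is often to pass to the complexification $(X_i)_p^{\C}$, invoke the classical complex analytic structure theorem for irreducible germs, and descend to the real points using conjugation invariance. Once the branched cover description is in hand the conclusion is automatic, since $\pi^{-1}(D)\cap X_i$ is a proper analytic subset of the irreducible germ $X_i$ and therefore has empty interior in $X_i$ by the analytic continuation principle.
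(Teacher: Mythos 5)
The paper cites this result from \cite{Trang_Massey} (Theorem 5.14) without giving a proof, so there is no in-paper argument to compare against; I will assess your sketch on its own merits. Item (1) is correct. For item (2), your overall strategy --- decompose the germ at $p$ into irreducible components using Noetherianity of $\I_{M,p}$, reduce density to the irreducible case, then use Weierstrass preparation / Noether normalization through the complexification --- is the standard route and is the one the cited notes follow.

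The genuine gap is in the sentence ``Above $\R^d\setminus D$ the cover is unbranched and its sheets are graphs of real analytic functions, hence $d$-dimensional analytic submanifolds, yielding smooth points of $X_i$ in every neighbourhood of $p$.'' That is precisely the point where the real-analytic case diverges from the complex one, and your phrasing glides over it. In the real setting, $\pi$ restricted to $X_i$ is \emph{not} a finite covering of a neighbourhood of $\pi(p)$ in $\R^d$: it need not be surjective, and over a nonempty open subset of $\R^d\setminus D$ the fibre in $X_i$ may be empty. (Take the irreducible germ $x^2 = z y^2$ at $0\in\R^3$ and project to $(y,z)$: the discriminant is $\{zy^2=0\}$, yet over $\{y\neq 0,\ z<0\}$ the real fibre is empty.) To close the argument you need two further facts. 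First, $X_i$ is not locally contained in $\pi^{-1}(D)$ near $p$; this does not follow from the analogous complex statement unless you know that $(X_i)^{\C}$ is the zero set of the complexification of the full ideal $I(X_i)_p$ of real analytic functions vanishing on $X_i$ (and not merely of some chosen defining equations), since only then does $X_i\subset\pi^{-1}(D)$ propagate to $(X_i)^{\C}\subset\pi^{-1}(D^{\C})$ and contradict the choice of $D$. Second, at a real point $q$ of $X_i$ with $\pi(q)\notin D$, where $(X_i)^{\C}$ is a conjugation-stable complex $d$-manifold, one must show that the conjugation fixed locus is a real $d$-manifold; this uses that a conjugate-linear involution of $\C^d$ has a real $d$-dimensional fixed subspace. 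Both steps are nontrivial and are where the substance of the theorem lies.

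A smaller but related misattribution: the claim that $X_i\setminus\bigcup_{j\neq i}X_j$ is open and dense in $X_i$ is not a consequence of the analytic continuation principle (Theorem~\ref{analytic_continuation}), which concerns real analytic functions on the connected ambient $M$, not containments of germs. It is itself an instance of ``a proper analytic subgerm of an irreducible germ is nowhere dense,'' i.e.\ of the very statement under proof, and must be extracted from the structure theorem or an induction on dimension rather than assumed at the outset.
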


As an immediate corollary, we have that the set of singular points $\Sigma X$ is a closed nowhere dense subset in $X$. In general, $\Sigma X$ will not be analytic even if $X$ is so.

\begin{defi}
A regular component of an analytic subset $X\subset M$ is a connected component of $\mathring{X}$.
\end{defi}

\begin{defi}
The dimension of an analytic set $X\subset M$ is the largest $d$ such that $\mathring{X}^{(d)}$ is non empty. The dimension of $X$ at a point $p$ in $X$, is the largest $d$ such that $p$ is in the closure of $\mathring{X}^{(d)}$. We say that $X$ is pure dimensional if and only if the dimension of $X$ at each one of its points is independent of the point. We make the analogous definitions for local analytic sets.
\end{defi}

It follows immediately from the definitions that a connected component of an analytic subset $X\subset M$ belongs to some $\mathring{X}^{(d)}$ hence it is a relatively open pure dimensional locally analytic subset in $X$. In particular, every regular component of an analytic subset has a well defined dimension.

\begin{teo}[Theorem 5.24, \cite{Trang_Massey}]\label{fully_contained}
Suppose that $X$ is an analytic subset of $M$, and $E$ is a connected, pure dimensional, locally analytic subset of M such that, for all $x$ in $E$, $E_x$ is irreducible. Finally, suppose that $p$ in $E$ is such that $E_p\subset X_p$. Then, $E \subset X$.
\end{teo}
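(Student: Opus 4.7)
The plan is to apply a connectedness argument to the set
\[
A \;=\; \{\,x \in E \;:\; E_x \subset X_x\,\}.
\]
Since $p \in A$ by hypothesis and $E$ is connected, it suffices to show that $A$ is both open and closed in $E$, for then $A = E$ and, in particular, every point of $E$ lies in $X$.

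Openness is routine. If $x \in A$, let $W$ be an open neighborhood of $x$ in $M$ witnessing $E \cap W \subset X \cap W$; then the same $W$ witnesses $E_y \subset X_y$ for every $y \in E \cap W$, so $E \cap W \subset A$.

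The substantive step is closedness. Suppose $x_n \in A$ and $x_n \to x \in E$, and set $d = \dim E$. Choose an open neighborhood $W$ of $x$ in $M$ small enough that $E \cap W$ is an analytic subset of $W$; by pure dimensionality of $E$, this is a $d$-dimensional analytic set. Then $Y := E \cap X \cap W$ is analytic in $W$ as an intersection of analytic subsets. For each sufficiently large $n$, the condition $x_n \in A$ provides a neighborhood $W_n \subset W$ of $x_n$ with $E \cap W_n \subset X$, whence $Y \cap W_n = E \cap W_n$, a $d$-dimensional analytic set. The $d$-dimensional smooth locus of $E \cap W_n$, which is dense in $E \cap W_n$ by Theorem \ref{smooth_points}, consists of $d$-dimensional smooth points of $Y$, so $\mathring{Y}^{(d)}$ accumulates at every $x_n$. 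Passing to the limit gives $x \in \overline{\mathring{Y}^{(d)}}$, i.e., $\dim_x Y \geq d$. But $Y_x$ is an analytic subgerm of $E_x$, which is irreducible of dimension $d$; since any proper analytic subgerm of an irreducible germ has strictly smaller dimension, we must have $Y_x = E_x$, and therefore $E_x \subset X_x$, i.e., $x \in A$.

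The main point to be careful about is the final germ-theoretic step equating $Y_x$ with $E_x$, which rests on the standard fact that a proper analytic subgerm of an irreducible analytic germ has dimension strictly smaller than the germ itself. Everything else, namely openness, analyticity of intersections, and the lower bound on $\dim_x Y$ by accumulation of $d$-dimensional smooth points, is a direct application of the material recalled just before the statement. Granting the germ-theoretic fact, connectedness of $E$ closes the argument.
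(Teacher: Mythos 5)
The paper does not prove this theorem; it is imported verbatim as Theorem~5.24 of Tr\'ang--Massey, so there is no in-paper argument to compare against. That said, your proof is correct and is in the spirit of what the cited source does: reduce to a connectedness argument via the set $A=\{x\in E: E_x\subset X_x\}$, get openness of $A$ for free from the definition of germ containment, and obtain closedness from a dimension count together with local irreducibility. Two small remarks on the write-up. First, in the closedness step you also need $x\in Y$ in order to speak of $\dim_x Y$; this does in fact follow, since $\overline{\mathring{Y}^{(d)}}\subset Y$ ($Y$ being closed in $W$), so you might make it explicit. Second, the step you flag --- that an analytic subgerm of an irreducible analytic germ with the same dimension must equal the germ --- is exactly the content one has to supply, and it is a standard fact about real analytic germs (it follows, for instance, from the local structure theory of irreducible real analytic germs via complexification, or directly from the Krull dimension of $\I_{M,x}/I(E_x)$ being attained only on the full germ when $I(E_x)$ is prime). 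With that fact granted, the argument is complete; you have also implicitly used the monotonicity $\dim Y_x \le \dim E_x$ for $Y_x\subset E_x$, which is of the same standard nature and worth naming alongside it.
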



The following result is due to J. Souček and V. Souček in \cite{Soucek}. See \cite{Trang_Massey} for a proof using the curve selection Lemma.

\begin{teo}[Theorem 6.7, \cite{Trang_Massey}]\label{isolated_critical_values_2}
Consider a real analytic function $f$ on $M$. Then, for every point $p$ in $M$ there is a neighbourhood $W$ of $p$ in $M$ such that
$$W\cap \Sigma f\,\subset\,f^{-1}(f(p)).$$
In particular, for every critical point $p$ of $f$ there is a neighbourhood $W$ of $p$ in $M$ such that $f(p)$ is the only critical value of $f|_W$, that is to say, locally the critical values are isolated.
\end{teo}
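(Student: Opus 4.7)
The plan is to argue by contradiction using the curve selection lemma for semianalytic sets, which is the standard tool for precisely this kind of local statement about analytic subsets. First reduce to the nontrivial case: if $p\notin \Sigma f$, then $df(p)\neq 0$ and, by continuity of the partial derivatives, $df\neq 0$ on an entire neighbourhood $W$ of $p$, so $W\cap \Sigma f=\varnothing$ and the conclusion holds trivially. Assume therefore $p\in\Sigma f$.

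Work in a coordinate chart $(x_1,\ldots,x_n)$ around $p$; on this chart $\Sigma f$ is the common zero locus of the real analytic functions $\partial f/\partial x_i$, hence an analytic subset. Suppose, for contradiction, that no neighbourhood $W$ of $p$ satisfies $W\cap\Sigma f\subset f^{-1}(f(p))$. Then $p$ lies in the closure of
\[
A\,=\,\{\,q\in \Sigma f\ :\ f(q)\neq f(p)\,\}\,=\,A_+\sqcup A_-,
\]
where $A_\pm$ are obtained by imposing $f>f(p)$ or $f<f(p)$ respectively. Since $\Sigma f$ is analytic and these strict inequalities are semianalytic conditions, both $A_+$ and $A_-$ are semianalytic. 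At least one of them, say $A_+$, has $p$ in its closure.

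Now apply the curve selection lemma: there exists a real analytic arc $\gamma:[0,\varepsilon)\to M$ with $\gamma(0)=p$ and $\gamma(t)\in A_+$ for all $t\in(0,\varepsilon)$. Consider the real analytic function $h(t)=f(\gamma(t))$ on $[0,\varepsilon)$. For each $t>0$ we have $\gamma(t)\in\Sigma f$, so $df(\gamma(t))=0$, and hence
\[
h'(t)\,=\,df(\gamma(t))\bigl(\gamma'(t)\bigr)\,=\,0\qquad\text{for all } t\in(0,\varepsilon).
\]
By the analytic continuation principle applied to the one-variable real analytic function $h'$ on the connected set $[0,\varepsilon)$, we deduce $h'\equiv 0$, so $h$ is constant. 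Thus $f(\gamma(t))=h(0)=f(p)$ for every $t\in[0,\varepsilon)$, contradicting $\gamma(t)\in A_+$, i.e.\ $f(\gamma(t))>f(p)$, for $t>0$.

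The last sentence of the theorem follows at once: if $p\in\Sigma f$ and $W$ is a neighbourhood witnessing the first assertion, then every critical value of $f|_W$ equals $f(p)$. The main delicate step is verifying the semianalyticity of $A_\pm$ so that the curve selection lemma applies; once that is in place, the derivation of the contradiction via the chain rule and the analytic continuation principle is routine.
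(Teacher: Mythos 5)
Your proof is correct, and it follows exactly the route the paper indicates: the paper does not prove this result itself but cites \cite{Trang_Massey} and remarks that the proof there uses the curve selection lemma, which is precisely the tool you deploy (decompose the bad set into semianalytic pieces $A_\pm$, select an analytic arc, and use the chain rule plus the one-variable identity theorem to force $f\circ\gamma$ to be constant). The argument is complete and matches the intended approach, so there is nothing to flag.
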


\begin{cor}
Consider a real analytic function $f$ on $M$. Then, for every precompact neighbourhood $A$ in $M$, the set of critical values of $f|_A$ is finite.
\end{cor}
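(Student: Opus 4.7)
The plan is to reduce this to a direct compactness argument based on the preceding theorem. Since $A$ is a precompact neighbourhood, its closure $\overline{A}$ is compact in $M$. The preceding theorem (Theorem \ref{isolated_critical_values_2}) gives, for each point $p$ in $\overline{A}$, an open neighbourhood $W_p$ of $p$ in $M$ satisfying
$$W_p\cap \Sigma f\,\subset\,f^{-1}(f(p)).$$
In other words, the only possible critical value of $f|_{W_p}$ is $f(p)$ itself.

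Next, I would observe that the family $\{W_p\}_{p\in\overline{A}}$ is an open cover of the compact set $\overline{A}$. By compactness, extract a finite subcover $W_{p_1},\ldots,W_{p_k}$ with $\overline{A}\subset W_{p_1}\cup\cdots\cup W_{p_k}$. Then every critical point of $f|_A$ lies in some $W_{p_i}$, and hence its critical value equals $f(p_i)$. Therefore
$$\{\,f(x)\ |\ x\in A\cap \Sigma f\,\}\,\subset\,\{f(p_1),\ldots, f(p_k)\},$$
which is a finite set, proving the corollary.

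There is no real obstacle here; the only thing one must be careful about is covering $\overline{A}$ rather than $A$, since we need compactness to get finiteness of the subcover. One could equivalently phrase the argument as applying Theorem \ref{isolated_critical_values_2} only at points of $\Sigma f\cap \overline{A}$, but covering all of $\overline{A}$ is cleaner and equally efficient.
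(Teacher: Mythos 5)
Your argument is correct and is precisely the standard compactness argument one would expect here; the paper itself states this corollary without proof, so there is nothing to compare against, but your route (cover the compact closure $\overline{A}$ by the neighbourhoods furnished by Theorem \ref{isolated_critical_values_2}, pass to a finite subcover, and observe that every critical value of $f|_A$ is among the finitely many $f(p_i)$) is the intended one. One small point of hygiene: a critical point of $f|_A$ is a point of $A\cap\Sigma f$ because $A$ is open, so the identification you implicitly make between $\Sigma(f|_A)$ and $A\cap\Sigma f$ is justified.
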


We end this section with some basic differential geometric results. The following is the Brown-Sard theorem.

\begin{teo}[Sections 2 and 3, \cite{Milnor}]
Consider smooth manifolds $M$ and $N$ and a smooth function $f:M\rightarrow N$. Then, the set of critical values $f(\Sigma f)\subset N$ has zero Lebesgue measure. In particular, it is nowhere dense in $N$.
\end{teo}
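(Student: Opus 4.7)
The plan is to reduce the statement to the Euclidean setting and then induct on the source dimension $m = \dim M$. First I would observe that "measure zero in $N$" is well defined independently of any chart, since smooth transition maps are locally Lipschitz and therefore preserve Lebesgue null sets. Since $M$ is second countable, covering it by coordinate neighbourhoods reduces the problem to the following local statement: if $U \subset \R^m$ is open and $f : U \to \R^n$ is smooth, then $f(\Sigma f)$ has $n$-dimensional Lebesgue measure zero. The case $m = 0$ is trivial (the image is a single point), so I would proceed by induction on $m$, treating $n$ as fixed.

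The core of the argument is to filter the critical set by the vanishing order of higher derivatives. Set $\Sigma_i = \{\,x \in U : D^j f(x) = 0 \text{ for all } 1 \le j \le i\,\}$, a decreasing chain of closed sets inside $\Sigma f$. The claim then splits into three pieces: (i) the image of $\Sigma f \setminus \Sigma_1$, consisting of critical points of positive differential rank; (ii) the images of $\Sigma_i \setminus \Sigma_{i+1}$ for each $i \ge 1$; and (iii) the image of $\Sigma_k$ for $k$ large enough.

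Piece (iii) is handled by a direct covering argument: Taylor's theorem gives $|f(y) - f(x)| \le C |y - x|^{k+1}$ for $x \in \Sigma_k$ and $y$ in a small cube around $x$, so covering a bounded piece of $\Sigma_k$ by $O(\delta^{-m})$ cubes of side $\delta$ produces an image covered by sets of total volume $O(\delta^{n(k+1)-m})$, which tends to $0$ whenever $k+1 > m/n$. Piece (ii) follows from the inductive hypothesis: at a point where some $i$th-order partial derivative of a component of $f$ is nonzero, $\Sigma_i$ is locally contained in a smooth $(m-1)$-dimensional hypersurface defined by that derivative, and the restriction of $f$ to the hypersurface has the same critical values on that piece, so induction on $m$ applies. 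Piece (i) is the delicate one: around a point where $Df$ has positive rank, the implicit function theorem lets me straighten coordinates so that $f$ takes the form $(x_1, \ldots, x_m) \mapsto (x_1, g(x_1, \ldots, x_m))$ with $g : U \to \R^{n-1}$ smooth; on each slice $\{x_1 = c\}$ the restriction of $g$ is a smooth map from an $(m-1)$-dimensional domain, whose critical set contains the slice of $\Sigma f$, so by induction on $m$ each slice image has $(n-1)$-dimensional measure zero, and Fubini closes the case.

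The main obstacle is piece (i): it is the step that genuinely requires both the implicit function theorem and Fubini, and it is the reason the induction must be on the source dimension rather than purely on the degeneracy index $i$. Once the measure-zero conclusion is established, the nowhere-dense assertion follows from $\sigma$-compactness of $M$: writing $\Sigma f$ as a countable union of compact sets, $f(\Sigma f)$ becomes a countable union of compact sets of measure zero; each such compact piece is closed and has empty interior, hence is nowhere dense, so $f(\Sigma f)$ is meager and its complement is dense in $N$.
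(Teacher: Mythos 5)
Your sketch reproduces Milnor's proof of the Brown--Sard theorem (Chapter 3 of \emph{Topology from the Differentiable Viewpoint}), which is precisely the reference the paper cites; the paper offers no independent argument, so the approaches coincide. One index in your piece (ii) is off: for $x\in\Sigma_i\setminus\Sigma_{i+1}$ it is some $(i+1)$-st order partial derivative (not an $i$-th order one) that is nonzero at $x$, and the hypersurface is cut out by the $i$-th order partial $w$ whose gradient realizes that nonzero derivative, so that $\Sigma_i\subset\{w=0\}$ locally; this is clearly a slip and the structure of the argument is otherwise sound.
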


The last sentence of the previous Theorem is the original statement due to A. B. Brown while the the first is the improvement due to A. Sard following earlier work by A. P. Morse.

\begin{cor}\label{Sard}
Consider smooth manifolds $M$ and $N$ and a proper smooth function $f:M\rightarrow N$. Then, the set of critical values $f(\Sigma f)\subset N$ is a closed  zero Lebesgue measure subset.
\end{cor}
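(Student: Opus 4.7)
The plan is to combine the Brown--Sard theorem quoted immediately before the corollary with the topological fact that proper maps between reasonable spaces are closed. Brown--Sard already delivers the measure-theoretic half of the conclusion, namely that $f(\Sigma f)$ has zero Lebesgue measure in $N$, so the only new work is to verify that $f(\Sigma f)$ is closed. The extra hypothesis compared with Brown--Sard is precisely that $f$ is proper, and this is exactly the ingredient that promotes ``nowhere dense of measure zero'' to ``closed of measure zero''.

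First I would show that $\Sigma f \subset M$ is closed. This is standard: in local coordinates the set of critical points is cut out by the vanishing of all $(\dim N)\times(\dim N)$ minors of the Jacobian matrix of $f$, which are continuous functions, so $\Sigma f$ is the intersection of closed sets and hence closed in $M$. (Equivalently, the rank function $x\mapsto \mathrm{rk}\,df_x$ is lower semicontinuous, so $\{x:\mathrm{rk}\,df_x<\dim N\}$ is closed.) Thus $\Sigma f$ is a closed subset of the manifold $M$.

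Next I would invoke that a proper continuous map $f:M\to N$ between smooth (in particular Hausdorff and locally compact) manifolds is a closed map: if $C\subset M$ is closed and $y\in N\setminus f(C)$, then choosing a precompact neighbourhood $V$ of $y$ and using properness, $f^{-1}(\overline{V})$ is compact, so $f^{-1}(\overline{V})\cap C$ is compact, its image under $f$ is compact hence closed in $N$, and a small enough neighbourhood of $y$ inside $V$ avoids that image. Applying this to the closed set $C=\Sigma f$ yields that $f(\Sigma f)$ is closed in $N$. Combined with Brown--Sard, $f(\Sigma f)$ is a closed subset of $N$ of zero Lebesgue measure, which is the claim.

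No step here is truly an obstacle; the entire content of the corollary is the observation that properness upgrades the ``nowhere dense'' conclusion of Brown--Sard to genuine closedness, via the closedness of $\Sigma f$ and the closed-map property of proper maps into locally compact Hausdorff spaces. I would therefore present the proof as two short lines: closedness of $\Sigma f$, plus properness of $f$, giving closedness of $f(\Sigma f)$, then cite Brown--Sard for measure zero.
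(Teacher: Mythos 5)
Your proof is correct and follows the same route as the paper: closedness of $\Sigma f$, properness implies $f$ is a closed map onto the locally compact $N$, hence $f(\Sigma f)$ is closed, and Brown--Sard gives measure zero. You merely spell out the standard facts (lower semicontinuity of rank, the closed-map property of proper maps) that the paper takes as known.
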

\begin{proof}
Because $N$ is locally compact and $f$ is proper, we have that $f$ is a closed map hence $f(\Sigma f)\subset N$ is a closed subset since $\Sigma f\subset M$ is so. By the previous Brown-Sard's Theorem, the result follows.
\end{proof}

We introduce the following definition.

\begin{defi}\label{strong_reg_seq}
A sequence $y_1,\,y_2,\,\ldots,\,y_m$ of real analytic functions on some neighbourhood $W$ in $M$ is a strong regular sequence if the map
$$(y_1,\,y_2,\,\ldots,\,y_m)\,:W\rightarrow \R^m$$
is, in differential geometric terms, regular, that is to say the differential of the map at every point in $W$ is surjective.
\end{defi}

The previous definition alludes to the fact that every strong regular sequence is a regular sequence in the the sense of commutative algebra. The zero locus of a strong regular sequence on $W$ is a real analytic submanifold in $W$. Locally, the converse also holds.

Explicitly, $y_1,\,y_2,\,\ldots,\,y_m$ is a strong regular sequence if at every point $p$ in $W$, the differentials
$$d_p y_1,\ d_p y_2,\,\ldots,\ d_p y_m$$
constitute a linearly independent set in $T_p W^*$. In particular, by the implicit function Theorem, every strong regular sequence $y_1,\,y_2,\,\ldots,\,y_m$ on $W$ is part of a strong regular sequence $y_1,\,y_2,\,\ldots,\,y_n$ on a possibly smaller neighbourhood $W'\subset W$ such that the resulting map
$$(y_1,\,y_2,\,\ldots,\,y_n)\,:W'\rightarrow \R^n$$
is a coordinate neighbourhood.

\subsection{Definition and statement of the theorems}\label{weakly_log_subsection}

\begin{defi}[Logarithmic field]
Consider a real analytic function $U$ in a real analytic manifold $M$. A vector field $V$ defined on some open subset $A\subset M$ is logarithmic for $U$ in $A$ if there is a real analytic function $P$ defined on $A$ such that $V(U)= P\,U$ on $A$.
\end{defi}

The previous definition has the following direct analog in the class of smooth functions.

\begin{defi}[Smoothly logarithmic field]
Consider a smooth function $U$ in a smooth manifold $M$. A vector field $V$ defined on some open subset $A\subset M$ is smoothly logarithmic for $U$ in $A$ if there is a smooth function $P$ defined on $A$ such that $V(U)= P\,U$ on $A$.
\end{defi}

A classical example of a (smoothly) logarithmic vector field is the one provided by Euler's Theorem on quasi homogeneous polynomials (smooth functions) which we shall describe. Consider a quasi homogeneous polynomial (smooth function) $f$ on $\R^n$ such that
$$f(t^{\lambda_1}\,x_1,\ldots,\,t^{\lambda_n}\,x_n)\,=\,t^\lambda\,f(x_1,\ldots,\,x_n),\qquad t>0$$
where the $\lambda$'s are constants. By Euler's Theorem we have
$$V(f)\,=\,\lambda\,f,\qquad\quad V\,=\,\sum_{i=1}^n\,\lambda_i\,x_i\,\partial_i$$
hence $V$ is a (smoothly) logarithmic vector field for $f$.

In the rest of this subsection, to motivate the necessity of a new notion of logarithmic fields, specifically the notion of a \textit{weakly logarithmic vector field}, we will assume that $U$ is a real analytic function on a real analytic manifold $M$.

Note that a vector field logarithmic for $U$ in some open subset intersecting some regular component of the zero locus of $U$ is necessarily tangent to this component. Here and in the rest of the paper, \textbf{the tangency property of a vector includes the possibility of the vector being zero.}

Note also that every real analytic vector field defined on some open subset $A\subset M$ is always logarithmic for $U$ in $A-\mathcal{V}(U)$. The troublesome set for a vector field to be logarithmic for $U$ is always the zero locus of $U$.

Now, in contrast to the complex case, a real analytic vector field tangent to some regular component of the zero locus of $U$ may not be logarithmic on the component even if the component has codimension one. As an example, consider the potential in $\R^2$.
\begin{equation}\label{example1}
U(x,y)=x^2(x^2+y^2).
\end{equation}
The zero locus is the $y$-axis and it is clearly a smooth irreducible real analytic submanifold. However, there is no real analytic logarithmic for $U$ vector field non null at the origin. In effect, without loss of generality and normalizing the field on some neighbourhood of the origin if necessary, we may suppose that the vector field is
$$V=\partial_y+ x\,f\,\partial_x$$
on some neighbourhood of the origin. Here $f$ is a real analytic function. Then,
$$P(x,y)=(V(U)/U)(x,y)=2\,\frac{y}{x^2+y^2}+2\,f\,\left(1+\frac{x^2}{x^2+y^2}\right),\quad x\neq 0.$$
The second term is bounded over compact sets while the first term diverges to infinity as the point goes to the origin along the diagonal:
$$\lim_{x\to 0}\,P(x,x)=\infty.$$
In particular, $V$ is not logarithmic at the origin.

A more striking example is the following adaptation of the Whitney example. Consider the potential
\begin{equation}\label{example2}
U(x,y,z)=x^2\,y^2\,(x-y)^2\,(x-z\,y)^2
\end{equation}
and suppose that there is a real analytic logarithmic vector field on some neighbourhood of the point $(0,0,z)$ with $0<z<1$ and non null at the point. The vector field must be a non zero multiple of $\partial_z$ at the point. Without loss of generality, normalizing and taking a smaller neighbourhood if necessary, by continuity we may suppose that the $z$-component of the vector field equals to $\partial_z$ on the neighbourhood of the point.
Then, the vector field generates a flow whose differential restricts to the orthonormal planes with the $z$-axis. In particular, it is a linear isomorphism of the plane fixing the axes and the diagonal but mapping the straight lines $x=z\,y$ with different slope and this is a contradiction. Therefore, none of the points $(0,0,z)$ with $0<z<1$ admit a real analytic logarithmic vector field on some neighbourhood of the point and non null at it.

In section \ref{Codim1} we prove the following theorem:

\begin{teo}\label{Theorem_codim1}
Consider a real analytic function $U$ in a real analytic manifold $M$ and a codimension one regular component $X$ of its zero locus $\mathcal{V}(U)$. Then, there is an open subset $A\subset M$ and a closed nowhere dense zero Lebesgue measure subset $S\subset X$ such that
\begin{enumerate}
\item $A\cap \mathcal{V}(U)=X$.
\item Every real analytic (smooth) vector field defined on some open subset $B\subset A$ and tangent to $X$ at $B\cap X$, possibly empty, is (smoothly) logarithmic for $U$ in $B-S$.
\end{enumerate}
\end{teo}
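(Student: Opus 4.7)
The plan is to reduce the problem to a local computation in coordinates adapted to $X$ and to isolate the obstructions to the logarithmic property on a proper analytic subset of $X$. First, I define the open set $A := M \setminus \overline{\mathcal{V}(U) \setminus X}$, where the closure is taken in $M$. Since $X$ is a connected component of $\mathring{\mathcal{V}(U)}$, it is relatively open in $\mathcal{V}(U)$, so every point of $X$ has a neighbourhood in $\mathcal{V}(U)$ contained in $X$ and therefore does not lie in $\overline{\mathcal{V}(U) \setminus X}$. Hence $X \subseteq A$, while the reverse inclusion $A \cap \mathcal{V}(U) \subseteq X$ is immediate, giving $A \cap \mathcal{V}(U) = X$.

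Around each $p \in X$ I pick real analytic coordinates $(x_1,\dots,x_n)$ on a neighbourhood $W \subseteq A$ of $p$ such that $X \cap W = \{x_1 = 0\}$ is connected. Using that $\I_{M,p}$ is a UFD, I factor $U|_W = x_1^{k_p}\, g$ with $g$ real analytic on $W$ and $x_1 \nmid g$ in $\I_{M,p}$; the latter is equivalent to $g|_{X \cap W} \not\equiv 0$. The exponent $k_p$ is locally constant on $X$: if at some $p' \in W \cap X$ the order of vanishing of $U$ along $X$ strictly exceeded $k_p$, then $x_1 \mid g$ in $\I_{M,p'}$, so $g$ would vanish on an open subset of $X \cap W$; by the analytic continuation principle applied to $g|_{X \cap W}$ on the connected submanifold $X \cap W$, this would force $g|_{X \cap W} \equiv 0$, contradicting $x_1 \nmid g$. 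Since $X$ is connected, $k_p$ is a global constant $k$.

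I now define the exceptional subset locally by $S \cap W := \{q \in X \cap W : g(q) = 0\}$. Any two local defining functions of $X$ differ by a unit, so the corresponding factors $g$ also differ by a unit and have the same zero locus on $X$, which makes $S$ intrinsically defined. Since $g|_{X \cap W}$ is a nonzero real analytic function, $S \cap W$ is a proper real analytic subset of $X \cap W$, hence closed, nowhere dense, and of Lebesgue measure zero. Paracompactness of $X$ glues these local descriptions into a closed, nowhere dense, measure-zero subset $S \subseteq X$.

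Finally, let $V$ be a smooth (respectively real analytic) vector field on an open $B \subseteq A$, tangent to $X$ at $B \cap X$. Off $X$ the potential does not vanish, so $V(U)/U$ is automatically smooth (analytic) there. Near a point $q \in B \cap X \setminus S$, write $V = v_1\partial_{x_1} + \dots + v_n\partial_{x_n}$ in adapted coordinates; tangency to $\{x_1 = 0\}$ together with Hadamard's lemma (respectively analytic factorization) gives $v_1 = x_1\tilde{v}_1$ with $\tilde{v}_1$ of the corresponding regularity. The direct computation
\[
V(U) \,=\, V(x_1^k\, g) \,=\, x_1^k\,\bigl(k\,\tilde{v}_1\, g + V(g)\bigr)
\]
yields $V(U) = P\,U$ with $P = k\,\tilde{v}_1 + V(g)/g$, which is smooth (analytic) on $W \setminus S$ since $g(q) \neq 0$. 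The local $P$'s agree on overlaps, being both the ratio $V(U)/U$ off $X$, so they define the required global function on $B \setminus S$. The main technical points that require care are the global consistency of $k_p$ along $X$ and the intrinsic character of $S$; the rest reduces to an elementary computation in local coordinates.
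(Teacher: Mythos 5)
Your proof is correct and follows a genuinely different, more elementary route than the paper's. The paper derives Theorem~\ref{Theorem_codim1} from the principalization machinery: Lemma~\ref{conjunto_S} produces $S$ as the union of critical-value sets of the restricted principalization maps $\pi^i|_{C_i^j}$, and after removing $S$ the pulled-back potential is monomialized as $\pm w_1^{d_1}$ by Lemma~\ref{rmrk-1}, from which the logarithmic property is read off directly. You instead work purely with the local UFD structure of $\I_{M,p}$: you factor $U = x_1^k\,g$ with $x_1 \nmid g$ in adapted coordinates, define $S$ as the zero locus of $g$ on $X$, and use the analytic continuation principle (Theorem~\ref{analytic_continuation}) to establish both the constancy of the exponent $k$ along the connected component $X$ and the intrinsic, proper, closed, nowhere-dense, measure-zero character of $S$. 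Both proofs are sound, but they buy different things: your argument is self-contained, avoids the resolution-of-singularities apparatus entirely, and exhibits the obstruction to the logarithmic property in a transparent form (vanishing of the coprime factor $g$ on $X$); the paper's approach is heavier but more uniform, since exactly the same principalization machinery is the engine of the much harder codimension two case (Theorem~\ref{Theorem_codim2}), where a direct factorization of $U$ along $X$ is no longer available and the elementary argument does not extend.
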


The previous Theorem is no longer valid in codimension two. As an example, consider the potential in $\R^3$
\begin{equation}\label{example3}
U(x,y,z)=x^2+y^2.
\end{equation}
Its zero locus is the $z$-axis. However the real analytic vector field
\begin{equation}\label{example3_vectorfield}
V=x\,\partial_x+2\,y\,\partial_y+\partial_z
\end{equation}
is tangent to the zero locus and it not logarithmic at any point of it. In effect,
$$P(x,y)=(V(U)/U)(x,y)=2\,\left(1+\frac{y^2}{x^2+y^2}\right)=2\,(1+\sin^2(\theta)),\quad r\neq 0$$
where we have expressed the result in the cylindrical coordinates along the $z$-axis. It is clear that $P$ does not have a continuous extension at $r=0$ since the limit depends on the $\theta$ angle. In particular, $V$ is not logarithmic at any point of the zero locus.

The previous example motivates the following definition:

\begin{defi}[Weakly logarithmic field]
Consider a smooth function $U$ in a smooth manifold $M$. A vector field $V$ defined on some open subset $A\subset M$ is weakly logarithmic for $U$ in $A$ if there is a function $P$ defined on $A$ and bounded on compact subsets such that $V(U)= P\,U$ on $A$.
\end{defi}

Every (smoothly) logarithmic vector field is weakly logarithmic and the vector field \eqref{example3_vectorfield} is an example of a weakly logarithmic field that is not (smoothly) logarithmic for the potential \eqref{example3}.

Looking forward for its dynamical applications, concretely the application of our instability criterion proved in section \ref{instability_criterion}, we are mainly interested in weakly logarithmic vector fields with certain regularity and we ask for this regularity to be locally Lipschitz continuous at least.

The existence of locally Lipschitz continuous weakly logarithmic vector fields is a subtle matter. Indeed, the same conclusions of the previous examples \eqref{example1} and \eqref{example2} hold for these fields as well with an almost verbatim argument. Moreover, the example \eqref{example1} has the following codimension two version
$$U(x,y,z)=(x^2+y^2+z^2)(x^2+y^2).$$
An almost verbatim argument as in the example \eqref{example1} shows the non existence of locally Lipschitz continuous weakly logarithmic vector fields for $U$ non null at the origin.


In section \ref{Codim2} we prove the following local extension theorem:

\begin{teo}\label{Theorem_codim2}
Consider a real analytic function $U$ in a real analytic manifold $M$ and a codimension two regular component $X$ of its zero locus $\mathcal{V}(U)$. Consider also a smooth vector field $F$ defined on $X$ and tangent to it. Then, there is a closed nowhere dense zero Lebesgue measure subset $S\subset X$ such that for every point $p$ in $X-S$ there is a neighbourhood $W_p$ of $p$ in $M$ whereat $F$ admits a locally Lipschitz continuous weakly logarithmic for $U$ extension.
\end{teo}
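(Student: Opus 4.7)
The plan is to reduce the codimension-two extension problem to the codimension-one situation (Theorem \ref{Theorem_codim1}) by locally principalizing $U$ along $X$, constructing on the blow-up an extension tangent to every exceptional divisor component, and then pushing it down to $M$ as a locally Lipschitz weakly logarithmic vector field. Crucially I will add Palamodov-style corrections \cite{Palamodov} at each step of the principalization to ensure the push-down has the required Lipschitz regularity.

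First I identify the bad set $S \subset X$. Let $m$ denote the generic transversal multiplicity of $U$ along $X$; the subset of $X$ where this multiplicity is strictly exceeded is a proper closed analytic subset, hence closed nowhere dense of zero Lebesgue measure. Choose a local Wlodarczyk principalization \cite{Wlodarczyk} of the ideal $(U)$ and further remove from $X$ the closed nowhere dense zero-measure loci over which the finitely many principalization centres fail to meet $X$ in standard position (the Sard-type exclusions being controlled via Corollary \ref{Sard}). The union forms $S$.

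At a good point $p \in X \setminus S$, take a precompact neighbourhood $W_p$ of $p$ with real analytic coordinates $(y_1, y_2, z)$ such that $X \cap W_p = \{y = 0\}$, and let $\pi: \tilde W_p \to W_p$ be the principalization restricted to $W_p$. By construction $\pi^* U = \prod_j e_j^{a_j} \cdot U'$ with $U'$ a non-vanishing real analytic unit and $e_j$ local equations of the simple normal crossings exceptional divisor $E = \bigcup_j E_j$. I lift $F$ to a smooth vector field $\tilde F$ on $\pi^{-1}(X) \subset E$ that is tangent to every $E_j$, using the explicit coordinate descriptions in the successive blow-up charts, and extend $\tilde F$ smoothly to a neighbourhood of $E$ in $\tilde W_p$ while maintaining tangency to each $E_j$ (a standard partition-of-unity argument). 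The extension $\tilde V$ is then automatically logarithmic for $\pi^* U$: applying Theorem \ref{Theorem_codim1} to each codimension-one component $E_j$, or equivalently by the direct computation $\tilde V(\pi^* U)/\pi^* U = \sum_j a_j\, \tilde V(e_j)/e_j + \tilde V(U')/U'$ (each summand being smooth by tangency to $E_j$ and by $U'$ being a unit), we obtain $\tilde V(\pi^* U) = \tilde P\, \pi^* U$ with $\tilde P$ smooth.

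Finally, push down: the vector field $V := \pi_* \tilde V$ on $W_p \setminus X$ admits a continuous extension to $W_p$ with $V|_X = F$ by tangency of $\tilde V$ to $E$, and the logarithmic identity descends to $V(U) = P \cdot U$ with $P$ bounded on compacts (since $\tilde P$ is continuous and the $\pi$-fibres are compact). The main obstacle is local Lipschitz continuity of $V$: the pushforward of a smooth tangent vector field on a blow-up is only continuous in general, and chain-rule factors like $y_2/y_1^2$ across the two blow-up charts must be tamed. This is where the Palamodov-style corrections enter: at each step of the principalization the freedom in the tangent smooth extension allows me to arrange that $\tilde V$ varies to first order in $e_j$ along the projective fibre direction contracted by that blow-up. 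The resulting $\tilde V$ has a bounded pushforward-Jacobian, giving the required locally Lipschitz estimate on $V$, and finiteness of the principalization closes the inductive argument.
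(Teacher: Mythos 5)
Your high-level strategy matches the paper's: principalize $U$, lift $F$ to a smooth tangent field on the blow-up, verify logarithmicity upstairs via the simple-normal-crossings monomial form, and push down. The final computation $\tilde V(\pi^*U)/\pi^*U = \sum_j a_j\,\tilde V(e_j)/e_j + \tilde V(U')/U'$ is exactly what the paper does to show $\tilde P$ extends smoothly, and the bound on $P$ over compacts via properness of $\pi$ is also the paper's argument.

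The genuine gap is in the pushdown step. You write that the main obstacle is Lipschitz continuity and propose to handle it by ``Palamodov-style corrections'' that arrange a bounded pushforward-Jacobian. This is not the right condition, and it is also not sufficient. The real issue is two-fold. First, well-definedness: if $\tilde V$ is merely smooth and tangent to $E$, the pushforward $\pi_*\tilde V$ need not exist as a vector field on $M$ at all, because $d_q\pi(\tilde V_q)$ can depend on $q\in\pi^{-1}(x)$; the necessary and sufficient condition at each exceptional component is that $\tilde V$ restricted to that component split as a \emph{horizontal lift of a fixed field on the center plus a vertical field} (with respect to the flat Ehresmann connection determined by the strong regular sequence). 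Second, regularity: the paper's Pushout Lemma \ref{Pushout} shows that this horizontal-plus-vertical structure, together with local Lipschitz continuity of $\tilde V$, is what yields a locally Lipschitz pushout; the estimate comes from the explicit cone geometry of $\pi(R_1)$, not from any Jacobian bound. A bounded Jacobian does not give the cancellation of the $\xi_i = y_i/y_1$-type singularities. Moreover, the corrections in the paper are not aimed at the Jacobian: in Case 2 of Lemma \ref{Induction_Up}, the center $C_{l+1}$ is a section of a $\mathsf{P}^1$-bundle over the previous center, and the pullback $V_l^*$ may fail to be tangent to it; one then adds a \emph{vertical} correction supported near $C_{l+1}$ to restore tangency to the next center while preserving the horizontal-plus-vertical decomposition on all exceptional components. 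You would need to formulate and prove this horizontal/vertical structure as an inductive invariant (the paper's property $P(l)$) and carry it through the entire blow-up tower; without it your pushdown may not even be a vector field.

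A secondary gap: your set $S$ via ``generic transversal multiplicity'' and centers meeting $X$ in ``standard position'' is too coarse. The paper's Lemma \ref{limpiando_la_cancha} needs more: on $X-S$ every center must project \emph{isomorphically} onto $X$, must be irreducible, must be fully contained in any exceptional component it meets, and must be cut out by a strong regular sequence; these are what make the Ehresmann-connection bookkeeping and the Case~1/Case~2 dichotomy in the induction work. The Sard-type exclusions you invoke give nowhere-density of critical loci, but you also need the intersection maps $\pi^i|_{C_i^j\cap H_{ik}^m}$ to be regular proper maps onto $X$ (so that, off $S$, either the intersection is empty, or $C_i^j\subset H_{ik}^m$, or the intersection has too low dimension to hit $X-S$), and you need the third-claim argument forcing $C_0=W_p\cap X$. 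Without these, you cannot guarantee the clean structure on which the lifting induction rests.
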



\subsection{Blowup and Ehresmann connection}\label{connection}
For this section we recommend the recent paper of M. Spivakovsky \cite{Mark} and references therein, specially the introduction of section 1.3 and subsections 1.3.1 and 1.3.2. For the concept of Ehresmann connection we recommend the classic book of S. Kobayashi and K. Nomizu \cite{Kobayashi_Nomizu}, chapter 2, section 1.

Consider a real analytic manifold $M$ and a closed real analytic submanifold $C$ of codimension $m$ at least one. Without loss of generality, we may suppose that $C$ is affine and is given by the zero locus of a strong regular sequence $y_1,\ldots, y_m$ of real analytic functions on $M$:
$$C= \mathcal{V}(y_1, \ldots, y_m).$$

Consider the real projective space $\mathsf{P}^{m-1}(\R)$ and denote by $\theta_1,\ldots, \theta_m$ its homogeneous coordinates. Define the blowup $Bl_C(M)\subset M\times \mathsf{P}^{m-1}(\R)$ of $M$ along $C$ as the variety of zeroes of the ideal generated by the relations $y_i\,\theta_j- y_j\,\theta_i$ for every $i,j=1,\ldots, m$. Define the canonical projection $\pi$ as the projection on the first factor. We summarize the definition in the following commutative diagram
\begin{equation}\label{blowup_construction}
\xymatrix{
Bl_C(M)=\mathcal{V}(y_i\,\theta_j- y_j\,\theta_i)\ \ \ar[dr]_\pi \ar@{^{(}->}[r] & \ \ M\times \mathsf{P}^{m-1}(\R) \ar[d]^{pr_1} \\
& M}.
\end{equation}

The submanifold $C$ is the \textit{center} of the blowup and $\pi^{-1}(C)$ is the \textit{exceptional divisor}. The canonical projection induce an isomorphism outside the center
$$\pi: Bl_C(M)-\pi^{-1}(C)\overset{\cong}\longrightarrow M-C$$
while it is a locally trivial fibration over $C$ with fiber the projective space $\mathsf{P}^{m-1}(\R)$ and total space the exceptional divisor
$$\pi: \pi^{-1}(C)\longrightarrow C.$$
Note that the exceptional divisor is a real analytic submanifold.

The previous construction does not depend on the strong regular sequence defining $C$. In effect, consider another blowup $Bl_C(M)'$ constructed from another strong regular sequence $y_1',\ldots, y_m'$. Then there is a unique isomorphism $\iota:Bl_C(M)\rightarrow Bl_C(M)'$ such that $\iota\circ \pi'=\pi$. This allows to extend the construction for arbitrary possibly not affine submanifolds $C$.

Topologically, the blowup space can be constructed as follows: After removing a tubular neighbourhood of the center, the boundary of the resulting space is a sphere bundle whose projectivization gives the blowup space. From this construction as well as the definition, it is clear that \textbf{the blowup of a codimension one submanifold is isomorphic to the original space and its only purpose is to turn the center into an exceptional divisor.}

As an example, consider the blowup of an $n$-dimensional real analytic manifold $M$ whose center consist of a single point $p$. As a smooth manifold manifold, the blowup space is the connected sum of $M$ with the real projective space $\mathsf{P}^{n}(\R)$,
$$Bl_p\,(M)\,=\,M\,\,\#\,\,\mathsf{P}^{n}(\R).$$

Consider a real analytic variety $N\subset M$. We define the \textit{strict transform} $\pi^*(N)$ as the closure of $\pi^{-1}(N-C)$ in $Bl_C(M)$, that is to say
\begin{equation}\label{strict_transform_definition}
\pi^*(N)= \overline{\pi^{-1}(N-C)}\subset Bl_C(M).
\end{equation}

As an example, consider the cone $\mathcal{V}(y^2+z^2-x^2)\subset \R^3$. It has a singularity at the origin. It is easy to see from the topological construction that its strict transform in the blowup of $\R^3$ at the origin is a cylinder. This is a very basic example of \textit{desingularization}.

Indeed, the blowup of $\R^3$ at the origin is the variety in $\R^3\times \mathsf{P}^{2}(\R)$ given by the relations
\begin{equation}\label{Example_Relations}
x\,\theta_2-y\,\theta_1=0,\quad x\,\theta_3-z\,\theta_1=0, \quad y\,\theta_3-z\,\theta_2=0
\end{equation}
where $\theta_1, \theta_2, \theta_3$ are the homogeneous coordinates of $\mathsf{P}^{2}(\R)$. In the affine coordinate chart corresponding to $\theta_1\neq 0$, define the affine coordinates
$$\xi_2= \theta_2/\theta_1,\quad \xi_3= \theta_3/\theta_1.$$
Restricted to this coordinate chart, the exceptional divisor is the set of solutions of $x=0$ and because of the relations \eqref{Example_Relations}, we have
$$\pi(x,\xi_2, \xi_3)=(x,y,z),$$
$$y=x\,\xi_2,\quad z=x\,\xi_3.$$
Then, restricted to the coordinate chart corresponding to $\theta_1\neq 0$, the preimage of $\mathcal{V}(y^2+z^2-x^2)\subset \R^3$ by $\pi$ is
$$\mathcal{V}\left(x^2\,(\xi_2^2+\xi_3^2-1)\right)=\mathcal{V}(E)\cup\mathcal{V}(\xi_2^2+\xi_3^2-1)\subset Bl_{{\bf 0}}(\R^3)$$
where the first factor on the right hand side corresponds to the exceptional divisor and the second to the strict transform which clearly is a cylinder.

Suppose that the center $C$ is contractible. This is achievable by considering a small enough neighbourhood of $M$. In particular, the fibration $\pi:\pi^{-1}(C)\rightarrow C$ over the center is trivial hence
$$\pi^{-1}(C)\cong C\times \mathsf{P}^{m-1}(\R)\subset Bl_C(M)$$
where the isomorphism is the restriction of \eqref{blowup_construction} to the center $C$ which clearly depends on the chosen strong regular sequence. The trivial fibration has a flat Ehresmann connection defined by the horizontal distribution
$$\mathsf{Hor}=\left\lbrace\, C\times \{y\}\ |\ y\in \mathsf{P}^{m-1}(\R)\,\right\rbrace$$
while the vertical distribution is given by the set of fibers
$$\mathsf{Vert}=\left\lbrace\, \{x\}\times \mathsf{P}^{m-1}(\R)\ |\ x\in C\,\right\rbrace.$$

Although the blowup of a space is a universal object unique up to isomorphism, in contrast to the vertical distribution that is canonical, in general the horizontal distribution is not invariant under these isomorphisms hence, as explained before, this distribution depends on the chosen strong regular sequence $y_1, \ldots, y_l$. Equivalently, \textbf{the choice of a strong regular sequence is a choice of a flat Ehresmann connection on the exceptional divisor.}

\subsection{Pullback and Pushout}

In the whole section we assume that \textbf{the center is given by a strong regular sequence}. As it was explained in the previous section, the choice of a strong regular sequence is a choice of a flat Ehresmann connection on the exceptional divisor. For this reason, the following geometric constructions depend on the chosen strong regular sequence for the center.

\begin{lema}[Pullback]\label{Pullback}
Let $V$ be a smooth vector field in $M$ tangent to $C$. Then, there is a unique smooth vector field $V^{*}$ in $Bl_C(M)$ tangent to $\pi^{-1}(C)$ such that:
\begin{enumerate}
\item It satisfies $d_a \pi (V_a^{*})= V_{\pi(a)}$ for every $a$ in $Bl_C(M)$.
\item Restricted to $\pi^{-1}(C)$, it is the sum of the horizontal lifting of $V_C$ and a vertical vector field.
\end{enumerate}
\end{lema}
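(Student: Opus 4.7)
The plan is to work in local affine charts of $Bl_C(M)$ and write $V^{*}$ down explicitly, showing that the apparent singularity along the exceptional divisor cancels exactly because $V$ is tangent to $C$. Uniqueness will then follow from the density of $Bl_C(M)\setminus \pi^{-1}(C)$.

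Complete the strong regular sequence $y_1,\ldots,y_m$ to a coordinate system $(y_1,\ldots,y_m,z_1,\ldots,z_{n-m})$ on a neighbourhood of a point of $C$, so that $C=\{y_1=\cdots=y_m=0\}$. The tangency of $V$ to $C$ means that $V(y_i)$ lies in the ideal generated by $y_1,\ldots,y_m$, i.e.\ $V(y_i)=\sum_j a_{ij}\,y_j$ for smooth functions $a_{ij}$, and we write $V=\sum_{i,j}a_{ij}\,y_j\,\partial_{y_i}+\sum_k b_k\,\partial_{z_k}$. In the affine chart of the blowup where $\theta_1\neq 0$ I take coordinates $(y_1,\xi_2,\ldots,\xi_m,z_1,\ldots,z_{n-m})$ with $\xi_j=\theta_j/\theta_1$, so that $\pi$ is given by $y_1\mapsto y_1$, $y_j\mapsto y_1\,\xi_j$ for $j\geq 2$, $z_k\mapsto z_k$ (with the convention $\xi_1=1$).

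The requirement $d_a\pi(V^{*}_a)=V_{\pi(a)}$ is equivalent to $V^{*}(\pi^{*}f)=V(f)\circ\pi$ for every smooth $f$. Applied to the coordinate functions, it gives $V^{*}(y_1)=y_1\sum_k a_{1k}\,\xi_k$, $V^{*}(z_k)=b_k\circ\pi$, and, expanding $V^{*}(y_1\,\xi_j)=V(y_j)\circ\pi$ by Leibniz, the equation $y_1\,V^{*}(\xi_j)=y_1\bigl(\sum_k a_{jk}\,\xi_k-\xi_j\sum_k a_{1k}\,\xi_k\bigr)$. The factor $y_1$ cancels cleanly — this is the crux of the argument and the step where the tangency hypothesis is used — so that $V^{*}(\xi_j)=\sum_k a_{jk}\,\xi_k-\xi_j\sum_k a_{1k}\,\xi_k$ is a manifestly smooth expression. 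Carrying out the analogous computation in the affine charts $\theta_i\neq 0$ for $i\neq 1$ yields matching data on overlaps, producing a globally defined smooth vector field $V^{*}$ on $Bl_C(M)$.

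Tangency of $V^{*}$ to $\pi^{-1}(C)$ is immediate: the exceptional divisor is $\{y_1=0\}$ in the chart and $V^{*}(y_1)$ vanishes there. For property (2), restrict the formula to $y_1=0$: the $\partial_{z_k}$-coefficients become $b_k|_C$, which under the trivialization $\pi^{-1}(C)\cong C\times\mathsf{P}^{m-1}(\R)$ determined by $y_1,\ldots,y_m$ coincides with the horizontal lift of $V|_C$, while the remaining $\partial_{\xi_j}$-coefficients are tangent to the $\mathsf{P}^{m-1}(\R)$-fibres, hence vertical. Uniqueness is automatic since $\pi$ restricts to a diffeomorphism on $Bl_C(M)\setminus\pi^{-1}(C)$, forcing $V^{*}_a=(d_a\pi)^{-1}V_{\pi(a)}$ on this dense open subset; any smooth extension is therefore uniquely determined. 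The main obstacle, as already flagged, is ensuring that the factor of $y_1$ divides through in the formula for $V^{*}(\xi_j)$, and this divisibility is precisely what the tangency hypothesis $V(y_i)\in(y_1,\ldots,y_m)$ supplies.
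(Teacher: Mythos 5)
Your proof is correct and follows essentially the same route as the paper: both work in the affine chart $\theta_1\neq 0$ of the blowup, expand $V$ using the tangency hypothesis $V(y_i)\in(y_1,\ldots,y_m)$, observe that the resulting formula for the pullback divides cleanly by $y_1$ and hence extends smoothly across the exceptional divisor, and then read off tangency and the horizontal-plus-vertical decomposition from the coordinate expression; the paper simply delegates the chart computation to its auxiliary Lemma \ref{Calculation} rather than redoing it inline. Your direct imposition of $V^{*}(\pi^{*}f)=V(f)\circ\pi$ on coordinate functions and your density argument for uniqueness are clean and match the paper's logic.
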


Because the pullback of a smooth vector field is unique, in general there will be no smooth pushout. However, under necessary tangential conditions, there exists a unique pushout in the class of locally Lipschitz continuous vector fields.

\begin{lema}[Pushout]\label{Pushout}
Let $V$ be a locally Lipschitz continuous vector field in $Bl_C(M)$ tangent to $\pi^{-1}(C)$ such that, restricted to $\pi^{-1}(C)$, it is the sum of the horizontal lifting of some vector field $W$ on $C$ and a vertical vector field. Then, there is a unique locally Lipschitz continuous vector field $V_*$ in $M$ tangent to $C$ such that:
\begin{enumerate}
\item It satisfies $d_a \pi (V_a)= V_{*,\pi(a)}$ for every $a$ in $Bl_C(M)$.
\item Restricted to $C$, it is equal to $W$.
\end{enumerate}
\end{lema}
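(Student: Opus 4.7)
The plan is to define $V_\ast$ as the pushforward $\pi_\ast V$ on $M \setminus C$, where $\pi$ is a diffeomorphism, extended by $W$ on $C$, and to verify that the two compatibility hypotheses on $V$ force this gluing to be locally Lipschitz continuous with the stated properties. Uniqueness is immediate: since $\pi$ is a real analytic diffeomorphism on $Bl_C(M) \setminus \pi^{-1}(C)$, condition (1) determines $V_\ast$ off $C$ by pushforward, and continuity determines $V_\ast|_C$. At a point $a$ of the exceptional divisor, $d_a\pi$ annihilates the vertical summand of $V_a$ and sends the horizontal summand to $W(\pi(a))$, so condition (1) forces $V_\ast|_C = W$, which is condition (2) and automatically makes $V_\ast$ tangent to $C$; this also shows the prescription is well-defined at exceptional fiber points.

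For regularity, I would work in a local chart around a point of $C$ with coordinates $(x,y)$, $C = \{y=0\}$, and cover the blowup by the $m$ standard affine charts $U_k = \{\theta_k \neq 0\}$ with coordinates $(x, y_k, \xi)$, $\xi_i = y_i/y_k$ for $i \neq k$. Writing
\[V = \sum_j a_j(x, y_k, \xi)\,\partial_{x_j} + b(x, y_k, \xi)\,\partial_{y_k} + \sum_{i\neq k} c_i(x, y_k, \xi)\,\partial_{\xi_i}\]
in $U_k$, the tangency hypothesis gives $b(x,0,\xi) = 0$ and the horizontal-plus-vertical hypothesis gives $a_j(x,0,\xi) = w_j(x)$, independent of $\xi$. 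Applying $d\pi$ to this frame yields, in the coordinates on $M$,
\begin{align*}
A_j(x,y) &= a_j(x, y_k, y/y_k), \\
B_k(x,y) &= b(x, y_k, y/y_k), \\
B_i(x,y) &= \frac{y_i}{y_k}\,b(x, y_k, y/y_k) + y_k\, c_i(x, y_k, y/y_k)\quad (i\neq k).
\end{align*}
Combining the two vanishing identities with Lipschitz continuity of $V$ produces two key estimates: $|a_j(x,y_k,\xi) - a_j(x,y_k,\xi')| \leq 2L|y_k|$ (uniformly in $\xi, \xi'$) and $|b(x,y_k,\xi)| \leq L|y_k|$. These absorb the apparent singularities caused by the factor $\xi = y/y_k$ blowing up as $y_k \to 0$, and yield continuity of $V_\ast$ at $C$ with the correct limits $A_j(x,y) \to w_j(x)$ and $B_i(x,y) \to 0$ independently of the direction of approach.

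The main step is the Lipschitz estimate at points of $C$; off $C$ it is inherited from the Lipschitzness of $V$ via the diffeomorphism property of $\pi$. Near $C$, I would split according to scale: when $|y - y'|$ is much smaller than $\max(|y|, |y'|)$, I choose a chart index $k$ in which $|y_k|$ and $|y_k'|$ are both comparable to $|y|$, $|y'|$, keeping $\xi = y/y_k$ and $\xi' = y'/y_k'$ bounded, and feed the chart-wise Lipschitz bounds for $a_j, b, c_i$ into the pushforward formulas above; when $|y - y'|$ is comparable to $\max(|y|, |y'|)$, the estimate follows via a triangle inequality through a point $q \in C$ on the segment joining $p$ and $p'$ together with the radial bound $|V_\ast(r) - W(q)| \leq L'|r - q|$, itself a direct consequence of the two key estimates. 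The principal obstacle is to ensure that the chart-wise estimates combine into a uniform Lipschitz constant across chart transitions: the global Lipschitz regularity of $V$ on the blowup imposes non-trivial compatibility relations between the components $a_j, b, c_i$ in different affine charts, and exploiting these relations is precisely what upgrades the naively Hölder-$\tfrac12$ combination of the two key estimates (obtained by AM--GM on the two bounds for $a_j(x,y_k,\xi)-a_j(x,y_k,\xi')$) into a genuine Lipschitz bound.
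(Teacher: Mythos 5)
Your approach is the same as the paper's: work in the standard affine charts of the blowup, compute the pushforward coefficients (your $A_j,B_k,B_i$ agree, after relabeling, with the output of Lemma~\ref{Calculation} and the formula for $V_*$ in the paper's proof), and extract first-order vanishing at the exceptional divisor from the tangency and horizontal-plus-vertical hypotheses. Your estimate $|b(x,y_k,\xi)|\le L|y_k|$ is precisely the Claim inside the paper's proof ($h'_1=h_1\,y_1$ with $h_1$ bounded over compact sets), and your two-sided bound $|a_j(x,y_k,\xi)-a_j(x,y_k,\xi')|\le 2L|y_k|$ is exactly the estimate the paper exploits on the truncated cone $\pi(R_1)$. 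The uniqueness discussion and the continuity-at-$C$ argument are correct and match the paper.

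The genuine gap is precisely where you flag it and then defer. Your AM--GM observation is right: the two key estimates alone yield only Hölder-$\tfrac12$ regularity for the pushed-down coefficients, and ``exploiting non-trivial compatibility relations between charts'' is an aspiration, not an argument --- and it is not a routine verification to leave implicit. Consider $M=\R^3$ with $C=\{y_1=y_2=0\}$, and in the chart $(x,y_1,\xi)$ with $\xi=y_2/y_1$ set $V=y_1\,\phi(\xi/y_1)\,\partial_x$, where $\phi$ is any Lipschitz function supported in $[-1,1]$ with $\phi(0)\neq 0$. This $V$ is locally Lipschitz on all of $Bl_C(M)$ (its partials in the first chart are $\phi(u)-u\phi'(u)$ and $\phi'(u)$ with $u=\xi/y_1$, both bounded; in the complementary chart $(x,y_2,\xi_1)$ it reads $y_2\xi_1\phi\bigl(1/(y_2\xi_1^{2})\bigr)\partial_x$, which is identically $0$ near the exceptional divisor), it is tangent to $\pi^{-1}(C)$, and it vanishes on $\pi^{-1}(C)$, so all hypotheses hold with $W=0$. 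Yet the pushout has $\partial_x$-coefficient $y_1\,\phi(y_2/y_1^{2})$ off $C$ and $0$ on $C$, and the difference quotient between $(y_1,0)$ and $(y_1,y_1^{2})$ is $|\phi(0)|/|y_1|$, which is unbounded as $y_1\to 0$. So cross-chart compatibility cannot, by itself, promote your bounds to a Lipschitz bound: the Lipschitz conclusion appears to fail under the hypotheses as stated unless one assumes more than mere Lipschitz regularity of $V$ (e.g.\ that the first-order vanishing at the exceptional divisor holds with a Lipschitz, not merely bounded, quotient, or that $V$ is $C^1$). Be aware that the paper's own proof has the identical gap: it verifies the Lipschitz estimate only between a point off $C$ and a point of $C$, then invokes local Lipschitzness on $M-C$ via the isomorphism $\pi$; that conjunction does not imply local Lipschitzness across $C$, which is exactly what the example above illustrates.
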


Consider a point $p$ in $C$. Because $C$ is smooth, there is a neighborhood $W$ of $p$ in $M$ and a strong regular sequence $x_1,\ldots, x_k, y_1,\ldots y_l$ on $W$ such that
$$C\cap W= \mathcal{V}(y_1,\ldots, y_l)$$
and $x_1,\ldots, x_l$ are coordinates of $C\cap W$. The blowup $Bl_{C\cap W}(W)$ is covered by affine charts corresponding to $\theta_i\neq 0$. For the chart corresponding to $\theta_1\neq 0$ define the affine coordinates $\xi_2,\ldots, \xi_l$ by the quotient
$$\xi_i=\theta_i/\theta_1,\qquad i=2,\ldots,l.$$
Because of the relations $y_i\,\theta_j- y_j\,\theta_i=0$ on the blowup, the set of coordinates $x_1,\ldots, x_k, y_1,\xi_2,\ldots, \xi_l$ on the blowup is related to the set of local coordinates on $M$ by
$$\pi(x_1\ldots, x_k, y_1, \xi_2, \ldots, \xi_l)=(x_1\ldots, x_k, y_1, y_2, \ldots, y_l),$$
$$y_i= y_1\,\xi_i,\quad i=2,\ldots, l.$$
Note that, by definition of the coordinates $\xi_i$, these are defined on the region $y_1=0$. Actually, the region $y_1=0$ is the intersection of the exceptional divisor with the chart corresponding to $\theta_1\neq 0$ and $\xi_2, \ldots, \xi_l$ are the coordinates of this chart of the exceptional divisor. The coordinates on the other affine coordinate charts of the blowup are defined analogously.

With respect to these coordinates and restricted to the chart corresponding to $\theta_1\neq 0$, a vertical field in $\pi^{-1}(C)$ is spanned by the fields
$$\partial_{\xi_2},\ \partial_{\xi_3},\ldots,\ \partial_{\xi_l}$$
while a horizontal field is spanned by the fields
$$\partial_{x_1},\ \partial_{x_2},\ldots,\ \partial_{x_k}.$$
An analogous assertion holds for the other charts.

Because $\pi$ is an isomorphism outside the exceptional divisor, there are well defined pulback and pushout of vector fields on the corresponding regions. In this respect, for the proof of the previous Lemmas, we need the following calculation Lemma. The proof is a tedious but very elementary calculation that we do not include.

\begin{lema}\label{Calculation}
With respect to the coordinate chart corresponding to $\theta_1\neq 0$ and defining $\xi_1=1$ for notational convenience, in the region $y_1\neq 0$ we have:
\begin{enumerate}
\item $\left(y_j\,\partial_{y_i}\right)^{*}= \xi_j\,\partial_{\xi_i},\qquad i\neq 1.$
\item $\left(y_j\,\partial_{y_1}\right)^{*}= \xi_j\,\left(y_1\,\partial_{y_1} - \sum_{i=2}^{l}\,\xi_i\,\partial_{\xi_i}\right).$
\item $\left(\partial_{\xi_i}\right)_* = y_1\,\partial_{y_i},\qquad i\neq 1.$
\item $\left(y_1\,\partial_{y_1}\right)_*= \sum_{i=1}^{l}\,y_i\,\partial_{y_i}.$
\end{enumerate}
Analogous expressions hold for the other charts.
\end{lema}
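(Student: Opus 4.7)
The proof will be a direct computation carried out entirely in the region $y_1 \neq 0$ of the affine chart $\theta_1 \neq 0$, where $\pi$ restricts to a diffeomorphism and hence both pullback and pushout are just the standard pushforward (resp.\ its inverse) of vector fields. My first step is to write down $\pi$ and its Jacobian explicitly in these coordinates. Using the convention $\xi_1 = 1$ and the relations $y_i = y_1 \xi_i$, the projection reads
$$\pi(x_1,\ldots,x_k,\,y_1,\,\xi_2,\ldots,\xi_l) \;=\; (x_1,\ldots,x_k,\,y_1,\,y_1\xi_2,\ldots,y_1\xi_l),$$
and from this one reads off
$$d\pi(\partial_{x_i}) = \partial_{x_i}, \qquad d\pi(\partial_{y_1}) = \partial_{y_1} + \sum_{i=2}^{l} \xi_i \,\partial_{y_i}, \qquad d\pi(\partial_{\xi_i}) = y_1\,\partial_{y_i} \quad (i\geq 2).$$

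For items (1) and (2) I would verify the pullback identities by applying $d\pi$ to the proposed right-hand sides and checking they equal the left-hand sides after identifying points via $\pi$. For (1), $d\pi(\xi_j\,\partial_{\xi_i}) = \xi_j y_1 \,\partial_{y_i} = y_j\,\partial_{y_i}$, valid also when $j=1$ thanks to $\xi_1=1$. For (2), $d\pi(y_1\partial_{y_1}) = y_1\partial_{y_1} + \sum_{i=2}^{l} y_i\,\partial_{y_i}$ while $d\pi\!\left(\sum_{i=2}^{l}\xi_i\partial_{\xi_i}\right) = \sum_{i=2}^{l} y_i\,\partial_{y_i}$; subtracting and multiplying by $\xi_j$ gives $y_j\,\partial_{y_1}$. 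Uniqueness of the pullback in the region $y_1 \neq 0$ then forces the stated formulas.

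For items (3) and (4), since $\pi$ is a diffeomorphism on $\{y_1 \neq 0\}$, the pushout is simply $d\pi$ of the given field read via $\pi^{-1}$. Item (3) is immediate from the Jacobian formula above. For (4),
$$d\pi(y_1\,\partial_{y_1}) \;=\; y_1\,\partial_{y_1} + \sum_{i=2}^{l} y_1 \xi_i\,\partial_{y_i} \;=\; \sum_{i=1}^{l} y_i\,\partial_{y_i},$$
which is the desired identity.

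The only real subtlety is notational bookkeeping: the convention $\xi_1 = 1$ is what lets formulas (1) and (2) cover the full range $j = 1,\ldots,l$ uniformly, and one has to keep track of when $y_j$ really means $y_1\xi_j$ versus when it is the coordinate on the target. The analogous statements in the remaining affine charts $\theta_k \neq 0$ are obtained by the symmetric relabeling that promotes $y_k$ to the role of $y_1$, so there is no additional content beyond this single chart computation.
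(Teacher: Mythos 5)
Your computation is correct, and it is the obvious and essentially unique way to do this. The paper actually omits the proof entirely (it says ``The proof is a tedious but very elementary calculation that we do not include''), so there is no authorial argument to compare against, but writing $\pi(x,y_1,\xi_2,\ldots,\xi_l)=(x,y_1,y_1\xi_2,\ldots,y_1\xi_l)$, reading off the Jacobian columns $d\pi(\partial_{y_1})=\partial_{y_1}+\sum_{i\geq 2}\xi_i\partial_{y_i}$ and $d\pi(\partial_{\xi_i})=y_1\partial_{y_i}$, and then checking that $d\pi$ carries the claimed right-hand sides to the left-hand sides (after translating coefficients along $y_i=y_1\xi_i$ so that $\xi_j y_1 = y_j\circ\pi$) is exactly the computation the lemma is encoding. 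Your remark that the convention $\xi_1=1$ is what makes items (1)--(2) uniform in $j$, and that the uniqueness of pullback/pushout on $\{y_1\neq 0\}$ (where $\pi$ is a diffeomorphism) is what upgrades the $d\pi$-check into the stated identities, correctly isolates the only points of care.
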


It is interesting that even if the vector field is zero at the center, the pullback may be a non null vertical field at the exceptional divisor. As an example, consider the blowup of the origin in the plane $\pi:Bl_{\bf 0}(\R^2)\rightarrow\R^2$ and the vector field $V=y_1\,\partial_{y_2}$ on the plane with coordinates $y_1, y_2$. Note that the vector field $V$ vanishes on the origin. However, with respect to the coordinates $y_1,\xi_2$ on the region $y_1\neq 0$ of the blowup space described before, by the first item of Lemma \ref{Calculation} the pullback of $V$ by $\pi$ is $V^*= \partial_{\xi_2}$ which clearly smoothly extends to $y_1=0$. In particular, the pullback $V^*$ on $\pi^{-1}({\bf 0})$ is a nonzero vertical vector field.

\begin{proof}[Proof of Lemma \ref{Pullback}]
Because the vector field is smooth and due to its tangency hypothesis, restricted to the coordinate neighbourhood with coordinates $x_1,\ldots, x_l, y_1,\ldots y_k$ we have the expression:
$$V= \sum_{j,i=1}^{l}\,h_{ji}\,y_j\,\partial_{y_i} + \sum_{i=1}^{k}\,g_i\,\partial_{x_i}$$
for some smooth real valued functions $h_{ij}$ and $g_i$. On the complement of $E$, the exceptional divisor, there exists a pullback $V^{*}$ and by Lemma \ref{Calculation}, on the affine coordinate chart corresponding to $\theta_1\neq 0$ it has the expression
$$V^{*}= \left(\sum_{j=1}^{l}\,(h_{j1}\circ \pi)\,\xi_j\right)\,y_1\,\partial_{y_1} +
\sum_{j,i=1,\ i\neq 1}^{l}\,\left( (h_{ji}\circ \pi)\,\xi_j-(h_{j1}\circ \pi)\,\xi_j\,\xi_i\right)\,\partial_{\xi_i}$$
$$+ \sum_{i=1}^{k}\,(g_i\circ \pi)\,\partial_{x_i}, \qquad y_1\neq 0.$$
where we have defined $\xi_1=1$ for notational convenience.

In particular, it has a unique smooth extension to the hyperplane $E\cap [\theta_1\neq 0]=\mathcal{V}(y_1)$, it is tangent to it and restricted to it the field is the sum of the horizontal lifting of $V|_C$ and a vertical field.
Analogous calculations and conclusions hold for the other affine charts and we have the result.
\end{proof}

\begin{proof}[Proof of Lemma \ref{Pushout}]
Due to the vector field's tangency hypothesis, restricted to the affine chart corresponding to $\theta_1\neq 0$ with coordinates $x_1,\ldots, x_k, y_1,\xi_2,\ldots, \xi_l$ we have the expression:
$$V= h'_1\,\partial_{y_1} + \sum_{i=2}^{l}\,h_i\,\partial_{\xi_i} + \sum_{i=1}^{k}\,(g_i\circ \pi)\, \partial_{x_i}$$
where every coefficient is locally Lipschitz continuous and $h'_1$ is null on $\mathcal{V}(y_1)$.
\bigskip

\noindent\textit{Claim: There is a function $h_1$ bounded over compact sets such that $h'_1= h_1\,y_1$.}
\begin{proof}[Proof of the claim]
It is enough to verify this property on compact convex sets meeting $\mathcal{V}(y_1)$. Let $K$ be a compact convex set in the blowup meeting $\mathcal{V}(y_1)$ and denote by $k$ the Lipschitz constant of $h'_1$ over $K$. Then, over $K$ we have
$$|h'_1(x)|= |h'_1(x)- h'_1(x')|\leq k\,||x-x'||= k\, |y_1|$$
where $x$ is a point in $K$ and $x'$ is a point in $\mathcal{V}(y_1)\cap K$ with coordinates
$$x= (x_1,\ldots , x_k, y_1,\xi_2,\ldots, \xi_l),$$
$$x'= (x_1,\ldots , x_k, 0,\xi_2,\ldots, \xi_l)$$
whose difference is therefore $y_1$. Defining $h_1$ on $\mathcal{V}(y_1)\cap K$ by the zero constant, we conclude that $|h_1|$ is bounded by $k$ over $K$. Because the compact convex set was arbitrary, we have proved the claim.
\end{proof}

In resume, in the same affine chart as before we have the expression
$$V= h_1\,y_1\,\partial_{y_1} + \sum_{i=2}^{l}\,h_i\,\partial_{\xi_i} + \sum_{i=1}^{k}\,(g_i\circ \pi)\, \partial_{x_i}.$$
where every $h_j$ is bounded over compact sets. Therefore, by Lemma \ref{Calculation} the pushout in the region $y_1\neq 0$ is
$$V_*= (h_1\circ \pi^{-1})\,\sum_{i=1}^{l}\,y_i\,\partial_{y_i} + y_1\,\sum_{i=2}^{l}\,(h_i\circ \pi^{-1})\,\partial_{y_i} + \sum_{i=1}^{k}\,g_i\, \partial_{x_i}.$$
Let $\delta>0$ and consider the cube
$$R_1= \left[\,|y_1|\leq \delta\,\right]\cap \bigcap_{j,i}\,\left[\,|x_j|\leq \delta\,\right]\cap \left[\,|\xi_i|\leq 2\,\right]\subset Bl_C(M).$$
The compact set $\pi(R_1)$ fibers over $C\cap \pi(R_1)$ and the fibers are truncated cones with slope equals to two with respect to $y_1$. Analogous cones can be defined for the other charts and this choice for the slope is to guarantee that the union of all of these cones covers a neighbourhood of the portion of the center $C\cap \pi(R_1)$.

Note that on $\pi(R_1)$, $y_1\to 0$ implies $y_i\to 0$, for every $i=2,\ldots, l$ hence
$$V_*|_{\pi(R_1)-C}\xrightarrow{\ \  y_1\to 0 \ \ } \sum_{i=1}^{k}\,g_i\, \partial_{x_i}.$$
An analogous argument holds for the other affine charts giving compact regions $\pi(R_1),\ldots, \pi(R_l)$ that cover the cube
\begin{equation}\label{cube}
\bigcap_{j,i}\,\left[\,|x_j|\leq \delta\,\right]\cap \left[\,|y_i|\leq \delta\,\right]\subset M.
\end{equation}

In particular, the field $V_*$ continuously extends on $C\cap \pi(R_1)$ by defining it on the center with the expression
\begin{equation}\label{extension_V_definition}
V_*|_C= \sum_{i=1}^{k}\,g_i\, \partial_{x_i}.
\end{equation}
Because the cube \eqref{cube} was arbitrary, we conclude that $V_*$ continuously extends on $C$.

Now, we prove that the extension is actually locally Lipschitz. By hypothesis $V$ is locally Lipschitz continuous hence $V_*$ is so in $M-C$ because $\pi$ is an isomorphism in the complement of the exceptional divisor. It remains to show that this property holds on the center.

On the compact region $\pi(R_1)$ consider $x\in \pi(R_1)-C$ and $x'\in \pi(R_1)\cap C$ with coordinates
$$x=(x_1,\ldots , x_k, y_1,\ldots, y_l),$$
$$x'=(x'_1,\ldots , x'_k, 0,\ldots, 0).$$
We have
$$|(h_i\circ \pi^{-1})\,y_j| = |(h_i\circ \pi^{-1})|\,|\xi_j|\,|y_1|\leq 2\,\left(\max_{R_1}\,|h_i|\right)||x-x'||$$
because $y_1$ is a component of $x-x'$ where $x$ is in $\pi(R_1)-C$ and $x'$ is in $\pi(R_1)\cap C$. Then, every function $(h_i\circ \pi^{-1})\,y_j$ is Lipschitz on $\pi(R_1)$.

Every function $g_i\circ \pi$ is Lipschitz on $R_1$ with Lipschitz constant $k$. Hence
$$|g_i(x)-g_i(x')|= |g_i\circ \pi(z)-g_i\circ \pi(z')|\leq k\,||z-z'||$$
where $z$ is the preimage of $x$ by $\pi$
$$z= (x_1,\ldots , x_k, y_1,\xi_2,\ldots, \xi_l),$$
and we have chosen the preimage of $x'$ by $\pi$
$$z'= (x_1',\ldots , x_k', 0,\xi_2,\ldots, \xi_l).$$
In particular, $||z-z'||\leq ||x-x'||$ and we conclude that $g_i$ is Lipschitz on $\pi(R_1)$ for
$$|g_i(x)-g_i(x')|\leq k\,||x-x'||.$$

An analogous argument holds for the other regions $\pi(R_2),\ldots, \pi(R_l)$ and we conclude that $V_*$ is Lipschitz on the cube \eqref{cube} since each one of its coefficients is so. Again, because the cube \eqref{cube} was arbitrary, we conclude that $V_*$ is locally Lipschitz on $C$ and this finishes the proof.
\end{proof}

\subsection{Preliminaries on principalization}\label{section_preliminaries_principalization}

We refer to the classical book of Hartshorne \cite{Hart} and references therein for the notions of sheaves (section 1, chapter 2), ringed space (section 2, chapter 2), sheaf of ideals, direct and inverse image functor, total transform (section 5, chapter 2) and divisor (section 6, chapter 2). The notions in real algebraic geometry are completely analogous where the scheme must be replaced by the ringed space $(M,\,\I_M)$ (See subsection \ref{preliminaries_real_analytic}).

\begin{defi}\label{normal_crossing}
A divisor $E$ in $M$ has \textit{simple normal crossings with a variety $X$} if for every point $x$ in $X\cap E$ there is a coordinate neighbourhood $W_x$ of $x$ with real analytic coordinates $w_1,\ldots, w_n$ such that $X\cap W_x= \mathcal{V}(w_1,\ldots, w_k)$ and for every irreducible component $E'$ of $E$ containing $x$ there is a coordinate $w_j$ such that $E'\cap W_x= \mathcal{V}(w_j)$. A divisor $E$ in $M$ has \textit{simple normal crossings} if it has simple normal crossings with everyone of its points.
\end{defi}

The following is the celebrated \textit{Principalization Theorem}. Building upon work of H. Hironaka \cite{Hironaka}, O. Villamayor \cite{Villamayor1}, \cite{Villamayor2}, E. Bierstone and P. D. Milman \cite{Bierstone_Milman}, in \cite{Wlodarczyk} J. W\l{}odarczyk proved a locally finite functorial principalization theorem for real and complex analytic spaces. We will be concerned with its application to real analytic spaces.

\begin{teo}[J. W\l{}odarczyk, \cite{Wlodarczyk} Theorem 2.0.3]\label{posta}
Let $\mathcal{I}$ be a coherent ideal sheaf of the sheaf $\I_M$ on an analytic manifold $M$ (not necessarily
compact). There exists a locally finite principalization of $\mathcal{I}$, that is to say, a manifold $\tilde{M}$, a
proper morphism
$$\textsf{prin}_{\mathcal{I}}: \tilde{M}\rightarrow M$$
and an ideal sheaf $\tilde{\mathcal{I}}$ of the sheaf $\I_{\tilde{M}}$ on $\tilde M$ such that
\begin{enumerate}
\item For any compact subset $Z$ in $M$ there are open neighbourhoods $A\supset Z$ in $M$ and $\tilde{A}=\textsf{prin}_{\mathcal{I}}^{-1}(A)$ in $\tilde{M}$ for which the restriction $\textsf{prin}_{\mathcal{I}}|_{\tilde{A}}$ splits into a finite sequence of blowups
\begin{equation}\label{splitting}
\textsf{prin}_{\mathcal{I}}|_{\tilde{A}}\,:\,\tilde{A}=A_r\xrightarrow{\pi_r} \ldots\rightarrow A_2\xrightarrow{\pi_2} A_1\xrightarrow{\pi_1} A_0= A
\end{equation}
with smooth centers $C_i\subset A_i$ such that

\item\label{SNC_item} The exceptional divisor $E_{A_i}$ of the induced morphism
$$\pi^i=\pi_1\circ\ldots\circ\pi_i:A_i\rightarrow A$$
has only simple normal crossings and $C_i$ has simple normal crossings with $E_{A_i}$.

\item\label{inverse_image_functor} The total transform $(\textsf{prin}_{\mathcal{I}}|_{\tilde{A}})^*(\mathcal{I}|_A)$ is the ideal sheaf of a simple normal crossing divisor $\tilde{E}_A$ which is a locally finite combination of the irreducible components of the divisor $E_{A_r}$.

\item\label{funct_item} For any pair of compact sets $Z'\subset Z$ and corresponding open neighbourhoods $A'\subset A$, omitting the empty blowups the restriction of the factorization \eqref{splitting} of $\textsf{prin}_{\mathcal{I}}|_{\tilde{A}}$ to $\tilde{A}'$ determines the factorization of $\textsf{prin}_{\mathcal{I}}|_{\tilde{A}'}$.
\end{enumerate}
The morphism $\textsf{prin}: (\tilde{M}, \tilde{\mathcal{I}})\rightarrow (M, \mathcal{I})$ commutes with local analytic isomorphisms and embeddings of ambient varieties.
\end{teo}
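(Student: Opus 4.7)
The plan is to prove this along Hironaka–Villamayor–Bierstone-Milman lines by a functorial, algorithmic construction of admissible blowups, patched together from local charts. The key object is a local invariant that measures the complexity of $\mathcal{I}$, together with a descent-in-dimension mechanism that reduces the problem to a lower-dimensional principalization.

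First, I would introduce the order of the ideal at a point: for $p\in M$ set $\mathrm{ord}_p(\mathcal{I})=\max\{k:\mathcal{I}_p\subset \mathfrak{m}_p^{k}\}$, where $\mathfrak{m}_p$ is the maximal ideal of the Noetherian local ring $\I_{M,p}$. This is finite, upper semicontinuous in the analytic Zariski topology, and its maximum locus $\mathrm{MaxOrd}(\mathcal{I})$ is therefore a closed analytic subset. In general this locus is not smooth, so it cannot be blown up directly; the resolution of this obstruction is the heart of the argument.

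The central technical device is the hypersurface of maximal contact. Near a point $p$ of order $d$, one chooses a coefficient $f\in\mathcal{I}_p$ of exact order $d$ and, via Weierstrass preparation and a Tschirnhaus transformation (here the characteristic-zero hypothesis is essential), produces a smooth hypersurface $H\subset W_p$ such that (i) every admissible sequence of blowups inside $\mathrm{MaxOrd}(\mathcal{I})$ takes place inside $H$, and (ii) a coefficient ideal $\mathrm{Coeff}_{H}(\mathcal{I})$ on $H$ is defined such that principalizing $\mathrm{Coeff}_H(\mathcal{I})$ inside $H$ strictly decreases $\mathrm{ord}(\mathcal{I})$ in $M$. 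This reduces the ambient dimension from $n$ to $n-1$, which gives the engine for a double induction on the pair $(n,d)$ ordered lexicographically. The centers $C_i$ are then defined as smooth strata of the maximum locus of an enriched invariant $(\mathrm{ord}\,\mathcal{I},\,\mathrm{ord}\,\mathrm{Coeff}_{H_1}\mathcal{I},\,\mathrm{ord}\,\mathrm{Coeff}_{H_2}\cdots,\ldots)$ that also records the history of the exceptional divisor, which guarantees both smoothness of $C_i$ and simple normal crossings of $C_i$ with the accumulated exceptional divisor $E_{A_i}$ demanded by item \eqref{SNC_item}.

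After a finite number of steps this invariant reaches its minimum value on every chart, at which point $\mathcal{I}$ has been transformed into an invertible ideal sheaf supported on a simple normal crossing divisor, yielding item \eqref{inverse_image_functor}. Functoriality with local analytic isomorphisms and embeddings follows because the invariant at each step depends only on $\mathcal{I}$ and on the exceptional divisor, both of which pull back under such morphisms; consequently the algorithm commutes with them, and the compatibility condition \eqref{funct_item} for nested compacts $Z'\subset Z$ is an immediate consequence applied to the open inclusion $A'\hookrightarrow A$. The passage from the algebraic to the real analytic setting is handled by local finiteness: exhaust $M$ by a countable family of compact sets with relatively compact open neighbourhoods on which $\I_M$ is Noetherian and the invariant assumes only finitely many values, so the algorithm terminates in finitely many blowups over each such neighbourhood, while functoriality ensures the local pieces glue to the global morphism $\mathsf{prin}_{\mathcal{I}}$. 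The main obstacle is the strict decrease of the invariant under each blowup, since verifying this requires intricate bookkeeping separating the old and new components of the transform of $\mathcal{I}$, and it is precisely where Hironaka's maximal-contact idea, available only in characteristic zero, is indispensable.
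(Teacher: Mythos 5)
The paper does not prove Theorem~\ref{posta}: it quotes it verbatim as Theorem~2.0.3 of \cite{Wlodarczyk}, following the expository remark that it builds on \cite{Hironaka}, \cite{Villamayor1}, \cite{Villamayor2}, \cite{Bierstone_Milman}. So there is no in-paper proof to compare yours against; the theorem is used as a black box in the proofs of Proposition~\ref{funct}, Lemma~\ref{principalization_algorithm_bye}, and thereafter.

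Taking your sketch on its own terms, it does trace the standard Hironaka--Villamayor--Bierstone--Milman route that \cite{Wlodarczyk} implements: order of the ideal, maximal-contact hypersurface via Weierstrass preparation and a Tschirnhaus shift, coefficient ideals, descent in dimension, lexicographic double induction, and an invariant that also records the exceptional history so that centers stay smooth and have normal crossings with the accumulated divisor. The two places where, as a proof, it has a genuine gap are the following. First, you assert that the enriched invariant ``depends only on $\mathcal{I}$ and on the exceptional divisor'' and deduce both the well-definedness of the centers and the commutation with local analytic isomorphisms and embeddings from that assertion. But hypersurfaces of maximal contact are highly non-canonical, and the coefficient ideals attached to two different choices are genuinely different ideals on genuinely different hypersurfaces; proving that the resulting invariant, its maximum locus, and the chosen center nevertheless coincide is the central technical difficulty of the whole theory. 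Włodarczyk's key contribution in the very paper you are reproving is a ``homogenization'' of the ideal that renders this canonicity transparent (Bierstone--Milman achieve it differently, via derivative ideals and a separate uniqueness argument), and without some such mechanism item~\ref{funct_item} and the final functoriality clause are unproved. Second, the passage to non-compact real or complex analytic $M$ is more than an exhaustion-by-compacts bookkeeping step: $\I_M$ need not be Noetherian globally (the paper itself recalls this in Section~\ref{preliminaries_real_analytic}), the upper semicontinuity and boundedness of the order function over a precompact neighbourhood have to be established (in the real analytic case one typically complexifies), and the finitely many blowup sequences over the charts must be shown to glue into one proper morphism $\mathsf{prin}_{\mathcal{I}}$ with a globally coherent total transform --- the very content of the locally finite compatibility in item~\ref{funct_item}. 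You invoke functoriality to do this glueing, but functoriality is exactly what the first gap leaves unestablished, so the argument as written is circular at this point.
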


We will refer to item  \ref{funct_item} and the last sentence of Theorem \ref{posta} as the \textit{functoriality of the pricipalization}. By the first item, it is clear that $\tilde{M}$ is a real analytic manifold. 

\begin{prop}\label{funct}
Let $\mathcal{I}$ be a coherent ideal sheaf on an analytic manifold $M$ and consider the principalization $\textsf{prin}_{\mathcal{I}}: \tilde{M}\rightarrow M$ in Theorem \ref{posta}. Then,
\begin{enumerate}
\item\label{precompact_item}
For every precompact neighbourhood $B$ in $M$, the restricted principalization $\textsf{prin}_{\mathcal{I}}|_{\tilde{B}}$ splits into a finite blowup sequence.
\item\label{pegado_local_global}
For every pair of open sets $A$ and $A'$ in $M$ satisfying the statement in \ref{posta} we have
$$\tilde{E}_{A\cap A'}\,=\,\tilde{E}_A\cap \tilde{E}_{A'}.$$
\item
The principalization $\textsf{prin}_{\mathcal{I}}$ has an exceptional divisor $\tilde{E}$ in $\tilde{M}$ with only simple normal crossings such that
$$\tilde{E}\cap \tilde{A}\,=\,\tilde{E}_A$$
for every open set $A$ in $M$ satisfying the statement in \ref{posta}.
\end{enumerate}
\end{prop}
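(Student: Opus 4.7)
The plan is to derive all three items from Theorem~\ref{posta}, with its functoriality statement (item~\ref{funct_item} together with the last sentence of the theorem) doing most of the work.

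For item~\ref{precompact_item} I would apply Theorem~\ref{posta} directly to the compact set $Z=\overline{B}$, which is compact because $B$ is precompact. This produces an open neighbourhood $A\supset\overline{B}$ on which $\textsf{prin}_{\mathcal{I}}|_{\tilde A}$ already splits as a finite sequence of blowups as in \eqref{splitting}. The desired factorization of $\textsf{prin}_{\mathcal{I}}|_{\tilde B}$ is then obtained by pulling $B\subset A$ back along this sequence and discarding any blowup that becomes empty, which is legitimate since blowup commutes with restriction to open subsets.

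For item~\ref{pegado_local_global}, given two such open sets $A$ and $A'$, the idea is to reduce everything to the intersection $A\cap A'$. Applying the functoriality to the open embedding $A\cap A'\hookrightarrow A$ identifies $\textsf{prin}_{\mathcal{I}}|_{\widetilde{A\cap A'}}$ with the principalization of $\mathcal{I}|_{A\cap A'}$, so that the total transform $(\textsf{prin}_{\mathcal{I}}|_{\tilde A})^*(\mathcal{I}|_A)$ restricts on $\widetilde{A\cap A'}$ to $(\textsf{prin}_{\mathcal{I}}|_{\widetilde{A\cap A'}})^*(\mathcal{I}|_{A\cap A'})$. Item~\ref{inverse_image_functor} of Theorem~\ref{posta} then forces the corresponding vanishing divisors to agree, giving
\[
\tilde E_{A\cap A'} \,=\, \tilde E_A \cap \widetilde{A\cap A'},
\]
and, by the same argument with $A'$ in place of $A$, $\tilde E_{A\cap A'}=\tilde E_{A'}\cap\widetilde{A\cap A'}$. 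Combining these with the inclusion $\tilde E_A\cap \tilde E_{A'}\subset \tilde A\cap\tilde A'=\widetilde{A\cap A'}$ immediately yields $\tilde E_A\cap \tilde E_{A'}=\tilde E_{A\cap A'}$.

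For item (3), I would cover $M$ by a family $\{A_\alpha\}$ of open sets satisfying the statement of Theorem~\ref{posta}, whose existence follows by applying item~\ref{precompact_item} around each point, and set $\tilde E=\bigcup_\alpha \tilde E_{A_\alpha}$. Item~\ref{pegado_local_global} guarantees that the $\tilde E_{A_\alpha}$ glue coherently on overlaps, so $\tilde E$ is a well-defined closed real analytic subset, and the simple normal crossings property, being local, is inherited from each $\tilde E_{A_\alpha}$. The compatibility $\tilde E\cap \tilde A=\tilde E_A$ for an arbitrary $A$ as in the statement follows from applying item~\ref{pegado_local_global} to every pair $(A,A_\alpha)$ and taking the union over $\alpha$, using $\bigcup_\alpha(A\cap A_\alpha)=A$. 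The main obstacle throughout is bookkeeping: the functoriality of principalization must be invoked carefully in item~\ref{pegado_local_global} so that total transforms and their divisorial vanishing loci really do commute with restriction to open subsets, after which items~\ref{precompact_item} and (3) reduce to standard gluing arguments.
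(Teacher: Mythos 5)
Your proposal is correct and follows essentially the same route as the paper: item (1) is obtained by applying Theorem~\ref{posta} to $Z=\overline{B}$ and invoking functoriality for the embedding $B\subset A$; item (2) compares total transforms on $\widetilde{A\cap A'}$ using the open embedding and item~\ref{inverse_image_functor} of Theorem~\ref{posta}, then intersects the two resulting identities; and item (3) is the standard gluing of the $\tilde E_A$ via item (2) and the local SNC property from item~\ref{SNC_item}. Your explicit remark that $\tilde E_A\cap\tilde E_{A'}\subset\widetilde{A\cap A'}$ is a helpful (if minor) clarification of the final step of item (2) that the paper leaves implicit.
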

\begin{proof}
The first item immediately follows from the first item of \ref{posta} and the functoriality. In effect, by Theorem \ref{posta} there is a neighbourhood $A$ of the compact subset $Z=\bar{B}$ whereat the pricipalization splits into a blowup sequence and because $B\subset A$ is an embedding, by functoriality we have the result.

For the second item note that the first item in \ref{posta} gives the commutative diagram  
\begin{equation*}
\xymatrix@C+=2cm{
    \tilde{A}\cap \tilde{A}'=\widetilde{A\cap A'} \ar[rr]^{\textit{\textsf{prin}}_{\mathcal{I}}|_{\widetilde{A\cap A'}}} \ar[d]_{\tilde{\iota}} & & A\cap A'\ar[d]^{\iota} \\
  \tilde  A \ar[rr]_{\textit{\textsf{prin}}_{\mathcal{I}}|_{\widetilde{A}}}    & & A }
\end{equation*}
where $\iota$ and $\tilde{\iota}$ are the respective canonical inclusions. From this diagram it follows that 
$$\left(\textit{\textsf{prin}}_{\mathcal{I}}|_{\widetilde{A\cap A'}}\right)^{\ast}\left(\mathcal{I}|_{A\cap A'}\right)\,=\,  \left(\textit{\textsf{prin}}_{\mathcal{I}}|_{\widetilde{A}}\right)^{\ast}\left(\mathcal{I}|_A\right)|_{ 
\widetilde{A\cap A'}}.$$

The lefthand side of the previous identity is the ideal sheaf  of $\tilde E_{A\cap A'}$ while the righthand side is the ideal sheaf of $\tilde E_{A}\cap \widetilde{A\cap A'}$ hence these divisors coincide, that is to say
$$\tilde E_{A\cap A'}\,=\,\tilde E_{A}\cap \widetilde{A\cap A'}\,=\,\tilde E_{A}\cap \tilde{A}'.$$

The same argument applied on $A'$ instead gives the identity
$$\tilde E_{A\cap A'}\,=\,\tilde E_{A'}\cap \widetilde{A\cap A'}\,=\,\tilde E_{A'}\cap \tilde{A}$$
and because the intersection of $\tilde E_{A'}\cap \widetilde{A\cap A'}$ and $\tilde E_{A}\cap \widetilde{A\cap A'}$ is 
$\tilde E_{A'}\cap \tilde E_A$, this finishes the proof of the second item.

The third item follows directly from the second one and item \ref{SNC_item} in Theorem \ref{posta}. This finishes the proof of the proposition.
\end{proof}

Whenever the ideal sheaf we are working with were clear from the context, we will refer to the principalization of $(A,\,\mathcal{I}|_A)$ simply as the \textit{principalization on $A$}.

The principalization algorithm in the proof of Theorem \ref{posta} does not do anything on open sets $A\subset M$ such that $\mathcal{I}|_A=\I_A$ because the order of the ideal sheaf at every point in $A$ is zero, that is to say for every $x$ in $A$,
$$\mbox{ord}_x(\mathcal{I})\,=\,\max\,\lbrace\,i\ |\ \mathcal{I}_x\subset \mathfrak{m}_x^i\,\rbrace\,=\,0.$$

The exceptional divisor $E_i$ in $A_i$ is defined inductively as
\begin{equation}\label{exceptional_induction_definition}
E_i= \pi_i^*(E_{i-1})\cup \pi_i^{-1}(C_{i-1})
\end{equation}
where the first factor on the right hand side denotes the strict transform. It is clear from the definition that if the center has codimension one, then the corrsponding blowup is an isomorphism whose only purpose is to turn the center into a new irreducible component of the exceptional divisor.

As usual, we order the (not necessarily irreducible) components $H_{ij}$ of the exceptional divisor $E_i$ in $A_i$ in order of appearance, that is to say by their date of birth:
\begin{subequations}\label{general_structure}
\begin{align}
\label{gen_str_1}& E_i\,=\,H_{ii}\cup H_{i,\,i-1}\cup\ldots \cup H_{i1},\\
\label{gen_str_2}& H_{ii}\,=\,\overline{\pi_i^{-1}(C_{i-1})\,-\,\pi_i^*(E_{i-1})},\\
\label{gen_str_3}& \pi_i|_{H_{ij}}:H_{ij}\xrightarrow{\cong} H_{i-1,\, j},\quad j<i.
\end{align}
\end{subequations}
Note that $H_{ii}$ is the exceptional divisor of the single blowup $\pi_i$. In particular, by expression \ref{gen_str_3}, every component $H_{ij}$ is smooth.

The splitting of the smooth center $C_{j-1}$ into its irreducible components
$$C_{j-1}\,=\,C_{j-1}^1\sqcup\,\ldots\,\sqcup C_{j-1}^l$$
induce an splitting by \eqref{gen_str_2} of $H_{jj}$ into its irreducible components as well
$$H_{jj}\,=\,H_{jj}^1\sqcup\,\ldots\,\sqcup H_{jj}^l$$
and this splitting is preserved under the isomorphism \eqref{gen_str_3},
$$H_{ij}\,=\,H_{ij}^1\sqcup\,\ldots\,\sqcup H_{ij}^l,\quad j<i.$$

In particular, every irreducible component $H_{ij}^k$ of the exceptional divisor $E_i$ is closed since the centers are closed as well by definition. For notational convenience, we define the real analytic proper maps
$$\pi^{i}_j = \pi_{j+1}\circ \ldots \circ \pi_i: A_i\rightarrow A_j.$$
Note the isomorphism between the irreducible components
\begin{equation}\label{isomorphism_irr_components}
\pi^i_j|_{H_{ij}^k}:H_{ij}^k\xrightarrow{\cong} H_{jj}^k,\quad j<i.
\end{equation}
Then, we can always identify the irreducible component $H_{ij}^k$ in $E_{i}$ with the corresponding $H_{jj}^k$ in $E_j$.

\begin{prop}[Embedded desingularization]\label{embedded_des}
Let $X'$ be an irreducible component of a closed analytic subspace $X$. Then, for every precompact neighbourhood $B$ in $M$ there is a minimum index $j$ such that the $j$-th strict transform $X'_j$ of $X'_0=B\cap X'$ is contained in the center $C_j$ and it is actually an irreducible component of it. In particular, $X'_j$ is smooth and has simple normal crossings with the exceptional divisor $E_{B_j}$.
\end{prop}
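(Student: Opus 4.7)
The natural strategy is to apply Theorem \ref{posta} to the (reduced) ideal sheaf $\mathcal{I}_X$ defining $X$, and then track the strict transforms of $X'$ step by step. By Proposition \ref{funct}(\ref{precompact_item}), the restriction of $\textsf{prin}_{\mathcal{I}_X}$ to $B$ splits into a finite blowup sequence
\[
\tilde{B}=B_r\xrightarrow{\pi_r}\ldots\xrightarrow{\pi_2} B_1\xrightarrow{\pi_1} B_0=B
\]
with smooth centers $C_0,\ldots,C_{r-1}$. Define inductively $X'_0:=B\cap X'$ and $X'_i:=\pi_i^{\ast}(X'_{i-1})$ in the sense of \eqref{strict_transform_definition}. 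The basic dichotomy for strict transforms is: if $X'_i\not\subset C_i$, then $X'_{i+1}$ is nonempty, and if $X'_i\subset C_i$, then $X'_{i+1}=\emptyset$.

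The first step is to show that $X'_j\subset C_j$ must occur for some $j<r$. For this I would use item \ref{inverse_image_functor} of Theorem \ref{posta}: the total transform of $\mathcal{I}_X|_B$ is the ideal sheaf of the SNC divisor $\tilde{E}_B$, which is contained in the exceptional divisor $E_r$. Taking zero loci, $\pi_B^{-1}(X\cap B)\subset E_r$. Since $\pi_B$ restricts to an isomorphism $B_r\setminus E_r\xrightarrow{\cong}B\setminus\bigcup_{i<r}\bar{C}_i$ (with $\bar{C}_i=\pi^i(C_i)$), this forces
\[
X\cap B\ \subset\ \bigcup_{i<r}\bar{C}_i,
\]
and in particular $X'\cap B\subset\bigcup_{i<r}\bar{C}_i$. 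By definition of strict transform, $X'_r\setminus E_r$ is the isomorphic copy of $X'\cap B$ on the locus where $\pi_B$ is an isomorphism, which is empty. Since $X'_r$ is the closure of this set, $X'_r=\emptyset$. Hence the set $\{i:X'_i\subset C_i\}$ is nonempty and I take $j$ to be its minimum; by minimality, $X'_j$ is nonempty.

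The second step is to show that $X'_j$ is actually an irreducible component of $C_j$. Pick any smooth point $p$ of $X'$ lying in $B\setminus(\bar{C}_0\cup\ldots\cup\bar{C}_{j-1})$ (which is dense in $X'\cap B$); its unique preimage $\tilde{p}\in B_j$ lies in $X'_j\subset C_j$, and near $\tilde{p}$ the map $\pi^j$ is a local biholomorphism onto a neighbourhood of $p$, so the germ $(X'_j)_{\tilde{p}}$ is isomorphic to $(X')_p$ and has dimension $\dim X'$. Since $C_j$ is smooth and $X'_j$ is contained in it, this pins down $\dim_{\tilde{p}}C_j=\dim X'$ and identifies the connected component of $C_j$ through $\tilde{p}$ with the connected component of $X'_j$ through $\tilde{p}$, using Theorem \ref{fully_contained}. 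Running this argument on every connected component of $X'_j$ shows that $X'_j$ is a union of connected components of the smooth manifold $C_j$, i.e.\ of irreducible components of $C_j$. The final conclusions of smoothness of $X'_j$ and of simple normal crossings with $E_{B_j}$ are then immediate from item \ref{SNC_item} of Theorem \ref{posta}.

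The main obstacle is the second step, i.e.\ ruling out the possibility that $X'_j$ is a proper analytic subset of some strictly larger irreducible component of $C_j$. Purely formal properties of a principalization give $X'_j\subset C_j$ but not the equality of dimensions; the matching of dimensions ultimately rests on the fact that the centers produced by W\l{}odarczyk's algorithm are the maximum contact strata of the order stratification, which at the stage where $X'_j$ first becomes smooth consist of $X'_j$ together with possibly other disjoint smooth strata. I would expect the proof to invoke this structural feature of the algorithm (through the functoriality clause of Theorem \ref{posta}) to conclude that the connected component of $C_j$ through $\tilde{p}$ has exactly the dimension of $X'$, and hence coincides with the corresponding component of $X'_j$.
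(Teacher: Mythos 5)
The paper does not prove this proposition: its ``proof'' consists of reducing to a precompact neighbourhood (by the same argument as Proposition \ref{funct}(\ref{precompact_item})) and then citing W\l{}odarczyk \cite{Wlodarczyk}, section 4, pages 42--43, where embedded desingularization is derived as a corollary of the internal structure of the principalization algorithm. So any genuine comparison must ask whether your argument can be made self-contained using only the black-box statements available in the paper, namely Theorem \ref{posta}, Proposition \ref{funct} and Theorem \ref{fully_contained}. Your first step is correct and worth spelling out: item \ref{inverse_image_functor} of Theorem \ref{posta} implies $\pi_B^{-1}(X\cap B)$ is supported in the exceptional divisor $E_{B_r}$, forcing $X'\cap B$ into the images of the centers, hence $X'_r=\emptyset$, hence the existence of a minimal $j$ with $X'_j\subset C_j$.

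Your second step, however, has a genuine gap, and you say so yourself. From $X'_j\subset C_j$, $C_j$ smooth, and $\dim_{\tilde p}X'_j=\dim X'$, nothing ``pins down'' $\dim_{\tilde p}C_j=\dim X'$: the centers of a principalization of $\mathcal{I}$ are the maximal-order strata of the marked ideal, not a priori related in dimension to a particular irreducible component of $\mathcal{V}(\mathcal{I})$. Theorem \ref{fully_contained} only lets you run the inclusion $X'_j\subset$ (component of $C_j$); the reverse inclusion requires the dimension equality you are trying to establish, so the appeal to that theorem is circular here. Ruling out a strictly larger component of $C_j$ through $\tilde p$ really does require opening the black box of W\l{}odarczyk's algorithm (the companion-ideal/coefficient-ideal descent that forces the order-one locus of the controlled transform to coincide with the strict transform of $X'$ at the critical stage). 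None of the statements quoted in this paper expose that structure, so a reader cannot complete step 2 from them. In short: your plan correctly identifies where the content lies, your step 1 is a serviceable argument the paper omits, but step 2 is not a proof, and you should not present the sentence beginning ``this pins down $\dim_{\tilde p}C_j=\dim X'$'' as a deduction --- it is precisely the thing to be proved, and the right move here (as the paper itself does) is simply to cite W\l{}odarczyk's embedded desingularization theorem.

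One further caution: you apply the principalization to the reduced ideal $\mathcal{I}_X$, whereas in all of the paper's applications the centers $C_i$ are those of the principalization of $\mathcal{I}_U$, which need not coincide with $\mathcal{I}_X$ (e.g.\ $U=(x^2+y^2)^2$). Whichever interpretation one adopts, the citation to W\l{}odarczyk still covers the statement, but a self-contained proof should be explicit about which ideal sheaf is being principalized.
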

\begin{proof}
After a similar argument as in item \ref{precompact_item} in the previous proposition \ref{funct}, the proof may be found in \cite{Wlodarczyk}, section 4, pages 42-43.
\end{proof}

The principalization may take several steps after the desingularization. As an example, consider the polynomial $f$ in $\R[x,y,z]$ defined by
$$f(x,y,z)=x^6+y^2.$$
Its zero locus is the $z$-axis which is a smooth submanifold. In particular, the embedded desingularization is trivial and is realized in the zeroth step. However, the principalization is achieved after three steps: the first step is the blowup
$$y=y_1\,x,\qquad f_1(x,y_1,z)\,=\,x^2\,(x^4+y_1^2),$$
the second step is the blowup
$$y_1=y_2\,x,\qquad f_2(x,y_2,z)\,=\,x^4\,(x^2+y_2^2)$$
and the third step is the blowup
$$y_2=y_3\,x,\qquad f_3(x,y_3,z)\,=\,x^6\,(1+y_3^2).$$
Defining the analytic coordinates $w_1=x\,(1+y_3^2)^{1/6}$, $w_2=y_3$ and $w_3=z$, we have the monomial form
$$f_3(w_1,w_2,w_3)\,=\,w_1^6.$$

\subsection{Proof of Theorems \ref{Theorem_codim1} and \ref{Theorem_codim2}}

Consider a real analytic potential $U$ whose zero locus is nonempty and consider a regular component $X$ of the zero locus $\mathcal{V}(U)$, that is to say a connected component of $\mathring{\mathcal{V}(U)}$ which is an open subset of $\mathcal{V}(U)$ and a submanifold of $M$ by item \ref{open_submanifold} in Theorem \ref{smooth_points}. Because $X$ is a relative open set of the zero locus, there is an open set $A$ in $M$ such that $A\cap \mathcal{V}(U)=X$. Therefore, by restricting to the open set $A$, \textbf{without loss of generality we may suppose that $\mathcal{V}(U)\,=\,X$ and $X$ is smooth and connected.}

The potential $U$ defines a ringed space $(M,\,\mathcal{I}_U)$ where the coherent ideal sheaf is the one associated with the real analytic potential $U$, that is to say the ideal sheaf whose stalk at the point $p$ is the localization of the ideal $(U)\subset \I_M$ at the point $p$:
\begin{equation}\label{rmrk0}
\mathcal{I}_{U,\,p}= (U)_p\subset \I_{M,\,p},\qquad p\in M.
\end{equation}

Although the following results follow directly from the principalization algorithm, in the following lemma we deduce them from the results in the previous section \ref{section_preliminaries_principalization}.

\begin{lema}\label{principalization_algorithm_bye}
Consider a precompact neighbourhood $A$ in $M$ and the principalization of the ideal sheaf $\mathcal{I}_{U}$ on $A$. Then, the following assertions hold:
\begin{enumerate}
\item If $A\cap X$ is empty, then $\textsf{prin}_{\mathcal{I}_U}|_{\tilde{A}}$ is the identity map.
\item\label{center_contained_in_XE} For every $i\geq 0$, $C_i\subset X_i\cup E_i$ where $X_i$ denotes the $i$-th strict transform of $X_0=A\cap X$.
\item\label{preimage_of_X0} For every $i\geq 1$, $(\pi^i)^{-1}(X\cap A)\,=\, X_i\cup E_i$.
\end{enumerate}
\end{lema}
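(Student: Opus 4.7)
\quad The plan is to settle item (1) directly from the remark preceding the statement, and to prove (2) and (3) by a single joint induction on $i$, combining functoriality of the principalization (item \ref{funct_item} of Theorem \ref{posta}) with the elementary decomposition of preimages under a single blowup.

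For (1), I first note that if $A\cap X=\emptyset$ then $U$ is a nowhere vanishing real analytic function on $A$, hence a unit in $\I_A$ (its reciprocal $1/U$ being real analytic on $A$). Therefore $\mathcal{I}_U|_A=(U)=\I_A$, so the order of the stalk $\mathcal{I}_{U,p}$ is zero at every $p\in A$; by the remark immediately preceding the lemma the principalization algorithm introduces no blowups, and $\textsf{prin}_{\mathcal{I}_U}|_{\tilde A}$ is the identity.

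For (2) and (3) I would argue by simultaneous induction on $i$, adopting the convention $E_0=\emptyset$, $X_0=X\cap A$ and $\pi^0=\mathrm{id}_A$ so that $(3)_0$ holds trivially. The base case for (2) is $(2)_0$: given any precompact open $A'\subset A-X$, item (1) says that the principalization on $A'$ is the identity, and functoriality forces the restriction of the factorisation of $\textsf{prin}_{\mathcal{I}_U}|_{\tilde A}$ to $\widetilde{A'}$ to coincide with this identity after omitting empty blowups. Hence every blowup of the restricted sequence is empty, i.e.\ $C_i\cap(\pi^i)^{-1}(A')=\emptyset$ for every $i$; exhausting $A-X$ by such $A'$ one obtains $\pi^i(C_i)\subset X$ for every $i\geq 0$, and in particular $C_0\subset X=X_0\cup E_0$.

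For the inductive step, assume $(2)_j$ and $(3)_j$ for $j\leq i-1$. I would first deduce $(3)_i$ from $(3)_{i-1}$ by writing
\[
(\pi^i)^{-1}(X\cap A)=\pi_i^{-1}\bigl(X_{i-1}\cup E_{i-1}\bigr),
\]
distributing $\pi_i^{-1}$ over the union, applying the identity $\pi_i^{-1}(Y)=\pi_i^{\ast}(Y)\cup\pi_i^{-1}(Y\cap C_{i-1})$ valid for any closed subset $Y$, and recombining via \eqref{exceptional_induction_definition}, i.e.\ $E_i=\pi_i^{\ast}(E_{i-1})\cup\pi_i^{-1}(C_{i-1})$. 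The crucial input here is $(2)_{i-1}$, which guarantees $C_{i-1}\subset X_{i-1}\cup E_{i-1}$ and thereby ensures that the new exceptional piece $\pi_i^{-1}(C_{i-1})=H_{ii}$ is absorbed by the right-hand side rather than producing an extraneous component. Then $(2)_i$ follows from the same functoriality argument used in the base case: having $(3)_i$, one identifies $A_i-(X_i\cup E_i)=(\pi^i)^{-1}(A-X)$, on which the principalization must be the identity, so $C_i$ cannot intersect it.

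The main obstacle is the set-theoretic bookkeeping for $(3)_i$: after distribution one obtains several candidate pieces (two strict transforms and two intersections with $C_{i-1}$) which must reassemble, without loss or redundancy, as exactly $X_i\cup E_i$. A secondary technicality is that the functoriality of Theorem \ref{posta} is indexed by open neighbourhoods of compact sets, so the functoriality arguments require exhausting $A-X$ by a family of precompact opens; this is unproblematic since $A$ is itself precompact.
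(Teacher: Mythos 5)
Your proof is correct and uses the same two ingredients as the paper: for item (1), the order-zero remark preceding the lemma; for items (2) and (3), functoriality (the principalization is trivial on precompact opens missing $X$, so the centres must project into $X$) combined with the inductive strict-transform computation in which $(2)_{i-1}$ is exactly what is needed to absorb $\pi_i^{-1}(C_{i-1})$ into $E_i$. The only difference is organizational: you establish $\pi^i(C_i)\subset X$ once and directly, whereas the paper proves item (2) by contradiction (take the minimal $i$ with $C_i\not\subset X_i\cup E_i$, prove the restricted version of (3) up to that $i$, pick $p\in C_i-(X_i\cup E_i)$ and a precompact $B=\pi^i(B')$ missing $X$ whose restricted blowup sequence is nevertheless nontrivial) and then notes that (3) is the same computation run without the index bound; your identity $\pi_i^{-1}(Y)=\pi_i^{\ast}(Y)\cup\pi_i^{-1}(Y\cap C_{i-1})$ for closed $Y$ packages the paper's inclusion-plus-relative-closedness argument more cleanly but is mathematically equivalent.
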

\begin{proof}
Because $A$ does not meet $X$ we have that $U$ is a non vanishing real analytic function on $A$ hence $\mathcal{I}_U|_A=\I_A$. Then, as it was mentioned in the previous section \ref{section_preliminaries_principalization}, the order of the ideal sheaf $\mathcal{I}_U$ is zero at every point in $A$ hence the principalization algorithm does not do anything on $A$. This concludes the proof of the first item.

Suppose that there is an index $i\geq 0$ such that $C_i$ is not contained in $X_i\cup E_i$ and let it be the minimum index with this property. Define $\pi^0$ simply as the identity and recall that $E_0$ is empty.

We claim that $(\pi^k)^{-1}(X\cap A)\,=\, X_k\cup E_k$ for every $0\leq k \leq i$. Indeed, it is trivially checked for $k=0$ and assuming that it holds for $k-1$, by the definition of the strict transform \eqref{strict_transform_definition} and expression \eqref{exceptional_induction_definition}, we have
$$X_k\cup E_k\,=\,\overline{\pi_k^{-1}(X_{k-1}-C_{k-1})}\,\cup\,\overline{\pi_k^{-1}(E_{k-1}-C_{k-1})}\,\cup\,\pi_k^{-1}(C_{k-1})$$
hence after some manipulation
\begin{equation}\label{aux}
\pi_k^{-1}(X_{k-1}\cup E_{k-1})\,\subset\,X_k\cup E_k\,\subset\,\overline{\pi_k^{-1}(X_{k-1}\cup E_{k-1})}.
\end{equation}
Recall that $X=\mathcal{V}(U)$ hence it is closed. By the inductive hypothesis and the fact that $X\cap A$ is relatively closed in $A$, we have that $X_{k-1}\cup E_{k-1}$ is relatively closed in $A_{k-1}$ hence the left hand side of \eqref{aux} is relatively closed in $A_k$ and we have proved the induction step
$$\pi_k^{-1}(X_{k-1}\cup E_{k-1})\,=\,X_k\cup E_k.$$
This finishes the proof of the claim.

Consider a point $p$ in $C_i-(X_i\cup E_i)$ and a precompact neighbourhood $B'$ of $p$ in $A_i$ such that
$$\overline{B'}\subset A_i\qquad \mbox{and}\qquad\overline{B'}\cap (X_i\cap E_i)=\emptyset.$$
Define $B=\pi^i(B')$. Since $\pi^i$ is an analytic isomorphism outside $E_i$ and
$$(\pi^i)^{-1}(X\cap A)\,=\, X_i\cup E_i$$
by the claim we just proved, we have that $B$ is a precompact neighbourhood which does not meet $X$. Therefore, by the previous item, the principalization on $B$ consist of a sequence of empty blowups which is absurd by item \ref{funct_item} in Theorem \ref{posta} because the $i$-th center of the restriction on $B$ of the principalization on $A$ is $C_i\cap B_i$ which is nonempty. Here $B_i$ is the $i$-th blowup space in the blowup sequence of $B$. This concludes the proof of the second item.

The proof of the third item is verbatim as the one of the claim and this finishes the proof of the lemma.
\end{proof}

\begin{cor}\label{property2}
Consider the functorial principalization $\textit{\textsf{prin}}_{\mathcal{I}_U}: \tilde{M}\rightarrow M$. Then,
$$\textit{\textsf{prin}}_{\mathcal{I}_U}^{-1}(X)\,=\,\textit{\textsf{prin}}_{\mathcal{I}_U}^{-1}(\mathcal{V}(U))\,=\, \tilde{E}.$$
\end{cor}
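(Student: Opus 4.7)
The first equality $\textsf{prin}_{\mathcal{I}_U}^{-1}(X) = \textsf{prin}_{\mathcal{I}_U}^{-1}(\mathcal{V}(U))$ is immediate from the standing reduction $\mathcal{V}(U) = X$ made right before Lemma \ref{principalization_algorithm_bye}. The content of the corollary is thus the second equality, which I propose to check locally on precompact neighbourhoods $A \subset M$ of arbitrary points. By Proposition \ref{funct} item \ref{precompact_item}, the restriction $\textsf{prin}_{\mathcal{I}_U}|_{\tilde{A}}$ splits as a finite blowup sequence $A_r \xrightarrow{\pi_r} \cdots \xrightarrow{\pi_1} A_0 = A$ with $\pi^r = \textsf{prin}_{\mathcal{I}_U}|_{\tilde{A}}$, and Lemma \ref{principalization_algorithm_bye} item \ref{preimage_of_X0} gives
\[
\textsf{prin}_{\mathcal{I}_U}^{-1}(X) \cap \tilde{A} \;=\; (\pi^r)^{-1}(X \cap A) \;=\; X_r \cup E_{A_r},
\]
where $X_r$ denotes the $r$-th strict transform of $X \cap A$.

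The next step is to show that $X_r$ is empty. Since $X$ is smooth and connected, hence irreducible, Proposition \ref{embedded_des} provides a minimum index $j \leq r$ at which $X_j$ is an irreducible component of the center $C_j$. Because blowing up along a set containing the strict transform annihilates it, we obtain $X_{j+1} = \overline{\pi_{j+1}^{-1}(X_j - C_j)} = \overline{\pi_{j+1}^{-1}(\emptyset)} = \emptyset$, and hence $X_k = \emptyset$ for every $k \geq j+1$; in particular $X_r = \emptyset$. Therefore $\textsf{prin}_{\mathcal{I}_U}^{-1}(X) \cap \tilde{A} = E_{A_r}$.

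It remains to match $E_{A_r}$ set-theoretically with $\tilde{E} \cap \tilde{A}$. By item \ref{inverse_image_functor} of Theorem \ref{posta}, the total transform $(\textsf{prin}_{\mathcal{I}_U}|_{\tilde{A}})^{*}(\mathcal{I}_U|_A)$ is the ideal sheaf of the divisor $\tilde{E}_A$, and since $\mathcal{I}_U$ is generated by $U$, this pullback ideal sheaf is generated by $\textsf{prin}_{\mathcal{I}_U}^{*}(U) = U \circ \textsf{prin}_{\mathcal{I}_U}$. Its support is therefore the zero locus of $U \circ \textsf{prin}_{\mathcal{I}_U}$ on $\tilde{A}$, which by the previous step coincides with $\textsf{prin}_{\mathcal{I}_U}^{-1}(X) \cap \tilde{A} = E_{A_r}$. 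Proposition \ref{funct} item 3 then gives the chain $\tilde{E} \cap \tilde{A} = \tilde{E}_A = E_{A_r} = \textsf{prin}_{\mathcal{I}_U}^{-1}(X) \cap \tilde{A}$; since such neighbourhoods $\tilde{A}$ cover $\tilde{M}$, the global equality follows. No step presents a genuine obstacle: the argument is a direct compilation of Lemma \ref{principalization_algorithm_bye}, Proposition \ref{funct}, Proposition \ref{embedded_des}, and item \ref{inverse_image_functor} of Theorem \ref{posta}, the only slightly delicate point being the use of the irreducibility of $X$ to deduce the vanishing of the strict transform $X_r$.
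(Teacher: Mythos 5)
Your proof is correct and follows essentially the same route as the paper: reduce to precompact $A$, apply item~\ref{preimage_of_X0} of Lemma~\ref{principalization_algorithm_bye} at the top index $r$, and kill the strict transform $X_r$ via Proposition~\ref{embedded_des}. The only point where you go beyond the paper's (very terse) proof is the final set-theoretic identification $E_{A_r}=\tilde E_A=\tilde E\cap\tilde A$, which the paper leaves implicit but you justify correctly using item~\ref{inverse_image_functor} of Theorem~\ref{posta} together with Proposition~\ref{funct}; this is a harmless and welcome amount of extra care, not a different argument.
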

\begin{proof}
Because $M$ is locally compact and the result is local, it is sufficient to prove the expression locally on every precompact neighbourhood in $M$. This follows directly from item \ref{preimage_of_X0} in Lemma \ref{principalization_algorithm_bye} applied at the top index $r$ of the respective blowup sequence of the principalization and the fact $X_r$ is empty by Proposition \ref{embedded_des}. This concludes the proof.
\end{proof}

Unless otherwise stated, along the proof the ideal sheaf considered in the principalization will allways be $\mathcal{I}_U$ and its restrictions.

\begin{lema}\label{rmrk-1}
Consider the real analytic function $\tilde{U}=U\circ\textit{\textsf{prin}}_{\mathcal{I}_U}$.
\begin{enumerate}
\item\label{zero_locus_U_tilde} The zero locus of $\tilde{U}$ is the exceptional divisor $\tilde{E}$, that is to say $\mathcal{V}(\tilde{U})=\tilde{E}$.
\item\label{monomialization} For every point in the exceptional divisor $\tilde{E}$ there is a coordinate neighborhood $W$ of the point in $\tilde{M}$ with real analytic coordinates $w_1,\ldots, w_n$ such that:
$$\tilde{U}|_W\,  =\, \pm w_1^{d_1}\ldots w_n^{d_n}$$
for some positive integer $d_1$ and some nonnegative integers $d_2,\ldots, d_n$.
\end{enumerate}
\end{lema}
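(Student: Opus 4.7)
\bigskip

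\noindent\textbf{Proposal.} Both items reduce to unpacking the conclusions of Theorem \ref{posta}. For item \ref{zero_locus_U_tilde}, by construction $\tilde U = U\circ\textit{\textsf{prin}}_{\mathcal{I}_U}$, so
$$\mathcal V(\tilde U)\,=\,\textit{\textsf{prin}}_{\mathcal I_U}^{-1}(\mathcal V(U))\,=\,\textit{\textsf{prin}}_{\mathcal I_U}^{-1}(X),$$
and Corollary \ref{property2} identifies the last set with the exceptional divisor $\tilde E$. This is immediate and nothing more needs to be said.

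For item \ref{monomialization}, the plan is to work locally on a precompact neighbourhood $A$ in $M$ containing the image of the chosen point, on which by Proposition \ref{funct} the principalization splits into a finite blowup sequence and the conclusions of Theorem \ref{posta} hold. By item \ref{inverse_image_functor} of Theorem \ref{posta}, the total transform $(\textit{\textsf{prin}}_{\mathcal I_U}|_{\tilde A})^*(\mathcal I_U|_A)$ is the ideal sheaf of the simple normal crossing divisor $\tilde E_A$. Since $\mathcal I_U$ is locally principal, generated at each point by $U$ (recall \eqref{rmrk0}), this total transform is locally principal as well, generated by $\tilde U$. Fixing a point $\tilde p$ in $\tilde E_A$ and applying Definition \ref{normal_crossing} of simple normal crossings to $\tilde E_A$ at $\tilde p$, we obtain a coordinate neighbourhood $W$ with analytic coordinates $w_1,\ldots,w_n$ in which every irreducible component of $\tilde E_A$ through $\tilde p$ is the zero locus of a single coordinate $w_j$. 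Thus the ideal sheaf of $\tilde E_A$ on $W$ is generated by a monomial $w_1^{d_1}\cdots w_n^{d_n}$ with nonnegative integer exponents and with $d_j\geq 1$ precisely for those indices $j$ such that $\mathcal V(w_j)$ is an irreducible component of $\tilde E_A$ through $\tilde p$. Consequently, on $W$,
$$\tilde U\,=\,c\;w_1^{d_1}\cdots w_n^{d_n}$$
where $c$ is a real analytic unit, that is, a nowhere vanishing real analytic function on $W$.

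It remains to absorb the unit into a coordinate. Shrinking $W$ if necessary so that it is connected, $c$ has constant sign $\varepsilon\in\{\pm 1\}$ on $W$, and the positive analytic function $|c|$ admits a real analytic positive $d_1$-th root $v=\exp\bigl((\log|c|)/d_1\bigr)$. Because $\tilde p$ lies in $\tilde E_A$, at least one exponent $d_j$ is positive, and after relabeling we may assume $d_1\geq 1$. Define $w_1'=v\,w_1$; the differential at $\tilde p$ satisfies $d_{\tilde p}w_1'=v(\tilde p)\,d_{\tilde p}w_1$, which is nonzero, so $(w_1',w_2,\ldots,w_n)$ is still a real analytic coordinate system on a possibly smaller neighbourhood $W'\subset W$, and
$$\tilde U|_{W'}\,=\,\varepsilon\,(w_1')^{d_1}\,w_2^{d_2}\cdots w_n^{d_n},$$
which is the desired monomial form.

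The argument is essentially bookkeeping once Theorem \ref{posta} and Corollary \ref{property2} are in hand; the only step that requires any thought is the passage from ``$\tilde U$ equals a monomial up to a unit'' to ``$\tilde U$ equals $\pm$ a monomial'', where the hard part is the existence of a real analytic $d_1$-th root of $|c|$. This is harmless in our setting because $|c|$ is strictly positive and analytic, but it is worth highlighting since the analogous statement over $\R$ would fail without the constant-sign reduction on a connected neighbourhood.
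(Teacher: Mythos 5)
Your proof is correct and follows essentially the same route as the paper's: item (\ref{zero_locus_U_tilde}) is read off from Corollary \ref{property2}, and item (\ref{monomialization}) is obtained by writing $\tilde U$ locally as a unit times a monomial in normal-crossings coordinates and then absorbing a $d_1$-th root of the unit into the first coordinate. The only cosmetic difference is that you derive the local monomial factorization by explicitly invoking item \ref{inverse_image_functor} of Theorem \ref{posta}, whereas the paper argues more tersely from the simple normal crossings structure of $\tilde E$ together with analyticity and item (\ref{zero_locus_U_tilde}); both justifications are valid and lead to the identical conclusion.
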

\begin{proof}
The first item immediately follows from Corollary \ref{property2}. Indeed,
$$\mathcal{V}(\,\tilde{U})=\textit{\textsf{prin}}_{\mathcal{I}_U}^{-1}(\mathcal{V}(U))=\textit{\textsf{prin}}_{\mathcal{I}_U}^{-1}(X)=\tilde{E}.$$

For the second item first note that $\tilde{U}$ can only vanish in $\tilde E$ on account of the first item.  
Because $\tilde{E}$ has only normal crossings, by definition \ref{normal_crossing}, the fact that $\tilde{U}$ is a real analytic function and the first item, we have that for every point in the exceptional divisor $\tilde{E}$ there is a coordinate neighborhood $W$ of the point in $\tilde{M}$ such that:
$$\tilde{U}|_W\,  =\, h\, w_1^{d_1}\ldots w_n^{d_n}$$
for some unit $h$ in $\I_W$ , some positive integer $d_1$ and some nonnegative integers $d_2,\ldots, d_n$ where $w_1,\ldots, w_n$ are the real analytic coordinates of $W$. The unit $h$ is a real analytic non vanishing function on $W$ and without loss of generality we may suppose that it is positive. Then, defining the new coordinate $w_1'=(h)^{1/d_1}\,w_1$ we get the result.
\end{proof}

\subsubsection{Proof of Theorem \ref{Theorem_codim1}}\label{Codim1}

\begin{lema}\label{conjunto_S}
Suppose that $X$ has codimension one in $M$. Then, there is a closed nowhere dense zero Lebesgue measure subset $S$ in $X$ such that the principalization is an isomorphism on $M-S$ and after identifying this space with the corresponding one of the principalization, the exceptional divisor coincides with $X-S$ on $M-S$,
$$\tilde{E}|_{M-S}\,=\,X-S.$$
Moreover, on every precompact neighbourhood in $X$, the number of connected components of $S$ is finite.
\end{lema}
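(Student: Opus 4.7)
The plan is to exploit the local form of $U$ near the smooth codimension-one submanifold $X$ and to identify $S$ with the projection of the higher-codimension centers of the principalization sequence.

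At every point $p\in X$, since $X$ is a smooth codimension-one submanifold with $\mathcal{V}(U)=X$, we may choose real analytic local coordinates $(y_1,\ldots,y_n)$ centered at $p$ with $X=\mathcal{V}(y_1)$, and then $U=y_1^m\,u_p$ with $u_p$ a unit; the exponent $m$ is locally constant in $p$ and therefore constant on the connected set $X$. Hence $\mathcal{I}_U$ is already locally principal and monomial in the regular parameter $y_1$. Blowing up the codimension-one smooth center $X$ is an isomorphism (see the discussion around \eqref{blowup_construction}), and after this single isomorphism-blowup the total transform of $U$ is $y_1^m$ times a unit, already in normal-crossing monomial form. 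Combined with Lemma \ref{principalization_algorithm_bye} item \ref{center_contained_in_XE} and the functoriality of Theorem \ref{posta} (item \ref{funct_item}), this forces the functorial algorithm to introduce further non-trivial blowups only at centers $C_i$ with $i\geq 1$ of codimension at least two, each lying inside the strict transform of $X$.

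Define $S\subset X$ as the image under $\textit{\textsf{prin}}_{\mathcal{I}_U}$ of the union of all such higher-codimension centers $C_i$. Closedness of $S$ follows because $\textit{\textsf{prin}}_{\mathcal{I}_U}$ is proper by Theorem \ref{posta} and each center is a closed analytic subvariety in its ambient blow-up. Since the images of the $C_i$ are positive-codimension analytic subsets of $X$, their union is nowhere dense in $X$ and has zero Lebesgue measure. For the finiteness of connected components on a precompact neighbourhood $N\subset X$, pick a precompact neighbourhood $A\supset \overline{N}$ in $M$; by Proposition \ref{funct}(\ref{precompact_item}), the principalization on $A$ factors into finitely many blowups, each center has finitely many irreducible components on the precompact set, and therefore $S\cap N$ is a finite union of closed analytic subsets, hence has finitely many connected components.

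On $M\setminus S$, the restricted principalization reduces to the initial codim-one isomorphism-blowup of $X$, since all later blowups occur at centers whose images lie in $S$. This blowup is an analytic isomorphism that identifies $\tilde{M}\setminus \textit{\textsf{prin}}_{\mathcal{I}_U}^{-1}(S)$ with $M\setminus S$ and, by Corollary \ref{property2}, sends $\tilde E$ to $X\setminus S$, yielding the two displayed conclusions. The main obstacle I anticipate is the precise justification that the functorial algorithm of Theorem \ref{posta} introduces no blowups beyond the codim-one one at generic points of $X$; this ultimately follows from the fact that the algorithm is locally determined (Theorem \ref{posta}(\ref{funct_item})) and that the local ideal $(y_1^m)$ is a textbook fixed point of the algorithm, so once $X$ has been blown up nothing more can happen near a generic point of $X$.
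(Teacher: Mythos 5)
There is a genuine gap in your first paragraph. You assert that at \emph{every} point $p$ of the smooth codimension-one component $X$ one may write $U=y_1^m\,u_p$ with $u_p$ a unit, and you then conclude that $\mathcal{I}_U$ is already locally principal and normal-crossing. This is false, and the paper's own example~\eqref{example1}, $U(x,y)=x^2(x^2+y^2)$, is a counterexample: here $X=\mathcal{V}(x)$ is smooth of codimension one, yet at the origin $x^2+y^2$ is not a unit, so $U$ is not a power of a defining equation of $X$ times a unit. If your claim were true the lemma would be trivial (one could take $S=\emptyset$ and a single isomorphism-blowup of $X$ would already principalize); the entire content of the lemma is precisely that there is a small set $S\subset X$ outside of which such a normal form holds. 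Relatedly, the algorithm need not take $X$ as its zeroth center; at a point like the origin above the order of the ideal jumps, and the first (nonempty) center is typically a higher-codimension stratum inside $X$, not $X$ itself. Your phrase that later centers lie ``inside the strict transform of $X$'' also cannot be right, since once $X$ itself is a center its strict transform is empty; by Lemma~\ref{principalization_algorithm_bye} the later centers lie in $X_i\cup E_i$, and after the strict transform dies they lie in the exceptional divisor.

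Your second paragraph, however, is essentially the paper's construction: $S$ is taken to be the projection of the codimension-$\geq 2$ centers. Since these centers have dimension $\leq n-2$ while $X$ has dimension $n-1$, every point of such a center is critical for the projection to $X$, so the set of critical values is the full image, and Corollary~\ref{Sard} gives that each such image is closed, nowhere dense and of Lebesgue measure zero. Two imprecisions remain. First, closedness and measure zero for the union require the local finiteness of the principalization (Proposition~\ref{funct}\eqref{precompact_item}) together with the compatibility $S_{A\cap A'}=S_A\cap S_{A'}$ coming from functoriality, which you gesture at but do not actually invoke. Second, ``a finite union of closed analytic subsets has finitely many connected components'' is not a correct general principle, and the images here are subanalytic rather than analytic; the correct reason, as in the paper, is that each $C_i^j$ is irreducible hence connected, so each image $\pi^i(C_i^j)$ is connected, and a finite union of connected sets has finitely many components. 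Once $S$ is built this way, your final step (restricting to $M\setminus S$, where only codimension-one centers survive and the principalization is an isomorphism with exceptional divisor $X\setminus S$ by Corollary~\ref{property2}) agrees with the paper's.
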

\begin{proof}
For every precompact neighbourhood $A$ in $M$, by item \ref{preimage_of_X0} in Lemma \ref{principalization_algorithm_bye}, we have real analytic proper maps
\begin{equation}\label{center_maps}
\phi_{A,i,j}\,=\,\pi^{i}|_{C_i^j}:C_i^j \rightarrow A\cap X
\end{equation}
where $C_i^j$ denotes the $j$-th irreducible component of the center $C_i$. By Corollary \ref{Sard}, the set $S_{A,i,j}$ of critical values of the map $\phi_{A,i,j}$ is a closed nowhere dense zero Lebesgue measure subset in $A\cap X$.

For every precompact neighbourhood $A$ in $M$, define the set $S_A$ as the union of all the critical sets $S_{A,i,j}$ such that $C_i^j$ has codimension at least two:
$$S_A\,=\,\bigcup_{codim\ C_i^j \geq 2}\,S_{A,i,j}.$$
Because it is a finite union, it is clear that $S_A$ is a closed nowhere dense zero Lebesgue measure subset in $A\cap X$. Because of the functoriality, recall that this notion includes item \ref{funct_item} in Theorem \ref{posta}, by a similiar argument as in item \ref{pegado_local_global} in Proposition \ref{funct} we have the relation
$$S_{A\cap A'}\,=\,S_A\cap S_{A'}$$
hence, because $M$ is locally compact and second axiom, these sets define a closed nowhere dense zero Lebesgue measure subset $S$ in $X$ such that $S\cap A=S_A$ for every precompact neighbourhood $A$ in $M$.

Because $X$ has codimension one, every $S_{A,i,j}$ is the image of the map $\phi_{A,i,j}$ which is connected hence the set of connected components of $S_A$ is finite since the union is finite.

For every precompact neighbourhood $A$ in $M-S$, by the functoriality of the principalization and omitting the empty blowups, the restricton of the blowup sequence into $A$ gives a principalization whose blowups has codimension one centers. These blowups are isomorphisms whose only purpose is to turn the center into a new irreducible component of the exceptional divisor $\tilde{E}_A$. Then, identifying the spaces $\tilde{A}$ and $A$ in such a way that the principalization map is the identity, by Corollary \ref{property2} we have that $\tilde{E}_A = A\cap (X-S)$ and because the precompact neighbourhood $A$ was arbitrary and $M-S$ is locally compact, we conclude that $\tilde{E}= X-S$ in $M-S$ and we have the result.
\end{proof}

\begin{proof}[Proof of Theorem \ref{Theorem_codim1}]
Suppose that the real analytic (smooth) vector field $V$ is defined on the open subset $B\subset M$ and is tangent to $X$ at the possibly empty intersection $B\cap X$. On $B-X$, there exists a unique real analytic (smooth) function $P$ defined by
$$V(U)= P\,U.$$
Consider the set $S$ of Lemma \ref{conjunto_S}. The rest of the proof consist in the construction of a real analytic (smooth) extension of $P$ on $B-S$.

If $(B\cap X)$ is empty, then there is nothing to prove. Otherwise, there are points in $(B\cap X)-S$ since $S$ is nowhere dense in $X$ hence in $B\cap X$ as well. Consider a point $p$ in $(B\cap X)-S$. By Lemma \ref{conjunto_S} and the functoriality of the principalization, we have
$$\tilde{E}|_{B-S}=(B\cap X)-S$$
hence by Lemma \ref{rmrk-1}, there is a coordinate neighbourhood $W_p$ of $p$ in $B-S$ with real analytic coordinates $w_1,\,w_2,\ldots,\, w_n$ such that:
$$U|_{W_p}\,  =\, \pm w_1^{d_1}\ldots w_n^{d_n}$$
for some positive integer $d_1$ and some nonnegative integers $d_2,\ldots, d_n$. If any of the integers $d_2,\ldots, d_n$ were positive, then there would be a singular point of $X$ which is absurd because $X$ is a regular component. Therefore we have
$$U|_{W_p}\,  =\, \pm w_1^{d_1},\qquad\quad d_1>0.$$

Because $V$ is real analytic (smooth) and tangent to $X$ at $B\cap X$, with respect to these coordinates we have
$$V|_{W_p}= h_1\,w_1\ \partial_{w_1}+ h_2\,\partial_{w_2} + \ldots + h_n\,\partial_{w_n}$$
for some real analytic (smooth) functions $h_1,\ldots, h_n$. Therefore, $P$ extends real analytically (smoothly) on $W_p$ just by defining it over this neighbourhood, with respect to the coordinates of $W_p$, by the expression
$$P|_{W_p}=  d_1\,h_1.$$
Because the point $p$ was arbitrary, we have the result.
\end{proof}

\subsubsection{Proof of Theorem \ref{Theorem_codim2}}\label{Codim2}$\,$\\

The strategy for the proof of  Theorem \ref{Theorem_codim2} can be split into two parts. First we  obtain  regions which admit  nice principalizations having regularity  properties 
described in detail in  Lemma \ref{limpiando_la_cancha}.  In the rest of this subsection we show how  these properties allow us to apply systematically the posibility of  taking  pullbacks and pushouts according to Lemmas   \ref{Pullback} and \ref{Pushout}.

\begin{lema}\label{extension}
Consider a submanifold $N\subset M$ and a vector $V$ field defined on it. Then, there is a neighbourhood $W$ of $N$ in $M$ and a vector field $\hat{V}$ defined on $W$ extending $V$.
\end{lema}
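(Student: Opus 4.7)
The plan is to prove this extension result via a standard local-to-global construction: first build local extensions in adapted coordinate neighbourhoods of $N$, then glue them together with a smooth partition of unity. Since the vector field $F$ in Theorem \ref{Theorem_codim2} is smooth, it is enough to carry the argument out in the smooth category.

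First I would invoke the fact that $N \subset M$ is a submanifold: around every point $p \in N$ there is a coordinate chart $(U_\alpha, (x_1, \ldots, x_k, y_1, \ldots, y_{n-k}))$ of $M$ such that $N \cap U_\alpha = \mathcal{V}(y_1, \ldots, y_{n-k})$ and $(x_1, \ldots, x_k)$ restrict to coordinates on $N \cap U_\alpha$. In these coordinates, the restriction $V|_{N \cap U_\alpha}$ has the expression
$$V|_{N \cap U_\alpha} = \sum_{i=1}^{k} a_i^\alpha(x)\, \partial_{x_i}$$
for smooth functions $a_i^\alpha$. I would then define the local extension $\hat{V}_\alpha$ on the whole chart $U_\alpha$ by the same formula, now treating each coefficient $a_i^\alpha(x)$ as a function on $U_\alpha$ that is constant in the transversal coordinates $y_1, \ldots, y_{n-k}$. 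By construction, $\hat{V}_\alpha$ is a smooth vector field on $U_\alpha$ and $\hat{V}_\alpha|_{N \cap U_\alpha} = V|_{N \cap U_\alpha}$.

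Next, choose an open cover $\{U_\alpha\}_{\alpha \in \Lambda}$ of $N$ by such adapted charts and set $W = \bigcup_{\alpha \in \Lambda} U_\alpha$, which is an open neighbourhood of $N$ in $M$. By paracompactness of $M$, there exists a smooth partition of unity $\{\rho_\alpha\}_{\alpha \in \Lambda}$ on $W$ subordinate to this cover. Define
$$\hat{V} = \sum_{\alpha \in \Lambda} \rho_\alpha\, \hat{V}_\alpha$$
on $W$, where each summand is extended by zero outside the support of $\rho_\alpha$. Local finiteness of the partition guarantees that the sum is a well-defined smooth vector field on $W$. Finally, for every $p \in N$ we compute
$$\hat{V}_p = \sum_{\alpha \in \Lambda} \rho_\alpha(p)\, \hat{V}_{\alpha, p} = \Big(\sum_{\alpha \in \Lambda} \rho_\alpha(p)\Big) V_p = V_p,$$
so $\hat{V}|_N = V$, which concludes the proof.

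There is essentially no serious obstacle: the main content is the observation that adapted coordinates on $M$ let one trivially extend the tangential coefficients by making them independent of the transversal variables, after which paracompactness does the gluing for free.
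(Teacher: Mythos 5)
Your proof is correct and follows essentially the same strategy as the paper's: adapted coordinate charts in which $N$ is locally a coordinate slice, local extension by making the coefficient functions constant in the transversal coordinates, and a partition of unity to glue the local extensions into a vector field on a neighbourhood $W$ of $N$. The only small point worth noting is that extending "by constancy in $y_1,\ldots,y_{n-k}$" tacitly requires the chart to be (or be shrunk to) a product neighbourhood so that $(x_1,\ldots,x_k,0,\ldots,0)$ remains in the image of the chart; the paper handles this by passing to a possibly smaller $W_p'\subset W_p$, and you should do the same.
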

\begin{proof}
This is an elementary construction. For every point $p$ in $N$ there is a coordinate neighbourhood $(W_p,\,\varphi_p)$ of $p$ in $M$ such that
$$\varphi(W_p\cap N)\,=\,\varphi(W_p)\cap [x_{k+1},\,\ldots,\,x_n\,=\,0]$$
where $x_1,\ldots,\,x_n$ are the canonical coordinates of $\R^n$. Define the vector field $\hat{v}_p$ simply as the translation of $(\varphi_p)_*(V)$ along the last $n-k$ coordinates, that is to say
$$\hat{v}_p(x_1,\ldots,\,x_n)\,=\,(\varphi_p)_*(V)(x_1,\ldots,\,x_k,\,0,\ldots,\,0).$$
Define the vector field $\hat{V}_p=(\varphi_p^{-1})_*(\hat{v}_p)$ on some possibly smaller neighbourhood $W_p'\subset W_p$ of $p$ in $M$ and note that it extends $V$ on this neighbourhood. Define
$$W\,=\,\bigcup_{p\in N}\,W_p'$$
and consider a partition of unity $\{f_p\ |\ p\in N\}$ subordinated to the open cover $\{W_p'\ |\ p\in N\}$ of $W$. Defining
$$\hat{V}\,=\,\sum_{p\in N}\,f_p\,\hat{V}_p$$
we have the result.
\end{proof}

\begin{lema}\label{extension2}
Consider a codimension two closed real analytic submanifold $C$ in $M$ and the blowup along this center $\pi: Bl_C(M)\rightarrow M$. Consider a smooth section $\sigma$ of the fibration $\pi^{-1}(C)\rightarrow C$ and a smooth vertical vector field $V$ along the image of $\sigma$, that is to say,
$$d_{\sigma(a)}\pi\,(\,V_{\sigma(a)})\,=\,{\bf 0},\qquad a\in C.$$
Then, there is a neighbourhood $W$ of $\pi^{-1}(C)$ in $Bl_C(M)$ and a vector field $\hat{V}$ defined on $W$ such that it is vertical on $\pi^{-1}(C)$ and extends $V$.
\end{lema}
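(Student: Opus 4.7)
The plan is to reduce the problem to Lemma \ref{extension} by first extending $V$ from the image of $\sigma$ to a smooth vertical vector field $\tilde{V}$ defined on the entire exceptional divisor $\pi^{-1}(C)$, and then invoking Lemma \ref{extension} with $N = \pi^{-1}(C)$ to obtain an extension $\hat{V}$ to some neighbourhood $W$. Because the extension produced by Lemma \ref{extension} restricts on $\pi^{-1}(C)$ to the original field $\tilde{V}$, the verticality requirement on $\pi^{-1}(C)$ will be automatic, and the condition that $\hat{V}$ extends $V$ along $\sigma(C)$ will follow from $\tilde{V}|_{\sigma(C)} = V$.

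The main step is therefore the fibrewise extension. Since $C$ has codimension two, the bundle $\pi:\pi^{-1}(C)\to C$ is a locally trivial fibration with fiber $\mathsf{P}^1(\R)$. Cover $C$ by open sets $U_\alpha$ over which the bundle trivializes, $\pi^{-1}(U_\alpha)\cong U_\alpha\times \mathsf{P}^1(\R)$, and in each trivialization write $\sigma$ as a smooth map $\sigma_\alpha:U_\alpha\to \mathsf{P}^1(\R)$ and $V$ as a smooth map $V_\alpha:U_\alpha\to T\mathsf{P}^1(\R)$ with $V_\alpha(a)\in T_{\sigma_\alpha(a)}\mathsf{P}^1(\R)$. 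Fix any auxiliary Riemannian metric on $\mathsf{P}^1(\R)$ and let $\exp$ denote its exponential map; then for each $a\in U_\alpha$ and each $y$ in a geodesic ball of fixed radius around $\sigma_\alpha(a)$ we may transport $V_\alpha(a)$ along the unique minimizing geodesic from $\sigma_\alpha(a)$ to $y$ to obtain a tangent vector $V_\alpha(a,y)\in T_y\mathsf{P}^1(\R)$. Multiplying by a smooth cutoff $\rho(a,y)$ which equals one on a smaller geodesic ball around $\sigma_\alpha(a)$ and vanishes outside the larger one, and extending by zero, yields a smooth vertical vector field $\tilde{V}_\alpha$ on $U_\alpha\times \mathsf{P}^1(\R)$ whose value at $(a,\sigma_\alpha(a))$ equals $V_\alpha(a)$.

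Finally, choose a smooth partition of unity $\{f_\alpha\}$ on $C$ subordinate to $\{U_\alpha\}$ and define
\[
\tilde{V}\,=\,\sum_\alpha\,(f_\alpha\circ \pi)\,\tilde{V}_\alpha.
\]
Since the sum is a convex combination in each fiber and the $\tilde{V}_\alpha$'s all satisfy $\tilde{V}_\alpha(a,\sigma_\alpha(a))=V_\alpha(a)$ at the points of $\sigma(C)$ where they are involved, the resulting vector field $\tilde{V}$ is smooth, vertical on $\pi^{-1}(C)$, and agrees with $V$ along $\sigma(C)$. Applying Lemma \ref{extension} to the closed submanifold $\pi^{-1}(C)\subset Bl_C(M)$ and the vector field $\tilde{V}$ produces a neighbourhood $W$ of $\pi^{-1}(C)$ and an extension $\hat{V}$ whose restriction to $\pi^{-1}(C)$ is $\tilde{V}$, hence vertical and extending $V$ as required.

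The only nontrivial point is arranging smooth dependence in the fiber extension and ensuring compatibility across different trivializations. Both are handled by the exponential-map-plus-cutoff construction and the partition of unity respectively; no other obstacle is expected.
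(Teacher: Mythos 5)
Your proof is correct, but it takes a genuinely different route from the paper. Both arguments share the same overall structure: first extend $V$ from $\sigma(C)$ to a smooth vertical vector field $\tilde{V}$ on all of $\pi^{-1}(C)$, then invoke Lemma~\ref{extension} with $N=\pi^{-1}(C)$ to spread $\tilde{V}$ to a tubular neighbourhood. The difference is entirely in the first step. The paper exploits the fact that in codimension two the fiber $\mathsf{P}^1(\R)$ is itself a Lie group, isomorphic to $U(1)/\{\pm 1\}$, and regards $\pi^{-1}(C)\rightarrow C$ as a principal $\mathsf{P}^1(\R)$-bundle; the extension is then the one-line formula $\tilde{V}_{g\cdot\sigma(a)}=(l_g)_*\bigl(V_{\sigma(a)}\bigr)$, which is well defined because the action is free. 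You instead make no use of any group structure: you work in local trivializations $\pi^{-1}(U_\alpha)\cong U_\alpha\times\mathsf{P}^1(\R)$, transport $V_\alpha(a)$ radially from $\sigma_\alpha(a)$ using the Riemannian exponential and a cutoff, and glue with a partition of unity pulled back from $C$. The two buy different things. The paper's construction is shorter and canonical once the principal bundle structure is granted, but it is tied to the codimension-two situation (and, strictly speaking, leans on the normal bundle admitting a reduction of structure group that makes the $\mathsf{P}^1(\R)$-action global, which is automatic in the applications here since the centers come from strong regular sequences but is not immediate from the hypotheses of the lemma as stated). Your construction is soft and entirely local, so it works verbatim for $\pi^{-1}(C)$ of any fiber type, hence in particular for centers of arbitrary codimension, and sidesteps any question about principal bundle structure. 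The only small points worth being explicit about in your write-up are (i) that the cutoff $\rho(a,y)$ should be taken of the form $\chi\bigl(d(\sigma_\alpha(a),y)\bigr)$ for a smooth bump $\chi$ supported well inside the injectivity radius, so joint smoothness in $(a,y)$ is manifest, and (ii) that $(f_\alpha\circ\pi)\tilde{V}_\alpha$ is extended by zero outside $\pi^{-1}(\operatorname{supp} f_\alpha)$ before summing; both are routine and do not affect the validity of the argument.
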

\begin{proof}
The fibration $\pi^{-1}(C)\rightarrow C$ is a $G$-principal fibration with group
$$G\,=\,P^1(\R)\,=\,U(1)/\{\pm 1\}.$$
Then there is a vertical smooth extension $\tilde{V}$ defined on $\pi^{-1}(C)$ given by
$$\tilde{V}_{g\cdot \sigma(a)}\,=\,(l_g)_*(\,V_{\sigma(a)}),\qquad a\in C$$
where $l_g$ denotes the left action of $g$ in $P^1(\R)$ on the fibration. The extension is well defined because the group action is free. By Lemma \ref{extension}, there is a neighbourhood $W$ of $\pi^{-1}(C)$ in $Bl_C(M)$ and a vector field $\hat{V}$ defined on $W$ extending $\tilde{V}$ and we have the result.
\end{proof}

\begin{lema}\label{limpiando_1}
Consider a precompact neighbourhood $A$ in $M$ and the principalization on $A$. If an irreducible component of the center $C_i$ has codimension one, then it coincides with some irreducible component of the exceptional divisor $E_i$.
\end{lema}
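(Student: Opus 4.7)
The plan is to invoke item \ref{center_contained_in_XE} of Lemma \ref{principalization_algorithm_bye}, which states that $C_i \subset X_i \cup E_i$, where $X_i$ denotes the $i$-th strict transform of $X_0 = A \cap X$. Since in the setting of Theorem \ref{Theorem_codim2} the submanifold $X = \mathcal{V}(U)$ has codimension two in $M$, the first step is to argue that at every stage of the blowup sequence, $X_i$ is either empty or an irreducible smooth codimension-two analytic submanifold of $A_i$. This is the dimensional control that will drive the rest of the argument.

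Granting this, let $C_i^j$ be a codimension-one irreducible component of $C_i$. Since $C_i^j$ is irreducible and contained in the finite union of closed irreducible sets $X_i \cup \bigcup_k H_i^k$, where the $H_i^k$ are the irreducible components of $E_i$ (closed and smooth by the discussion around \eqref{general_structure}), we conclude that $C_i^j \subset X_i$ or $C_i^j \subset H_i^k$ for some $k$. The first possibility is excluded by the codimension mismatch: a codimension-one subset cannot sit inside a codimension-two one. In the second case, $C_i^j$ and $H_i^k$ are both smooth connected submanifolds of $A_i$ of the same codimension; the inclusion $C_i^j \subset H_i^k$ is therefore both open (as an inclusion of manifolds of equal dimension is a local diffeomorphism onto its image) and closed (since $C_i^j$ is a closed subset of $A_i$, hence of $H_i^k$). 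Connectedness of $H_i^k$ then forces $C_i^j = H_i^k$, as required.

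The main obstacle is verifying the first step, namely that $X_i$ retains its codimension-two character throughout the sequence. Here the idea is to use the general principle that the strict transform of an irreducible variety under a blowup whose smooth center does not contain it remains irreducible of the same dimension, combined with Proposition \ref{embedded_des}: the moment the strict transform $X_j$ becomes an irreducible component of some center $C_j$, the subsequent strict transforms are empty directly from \eqref{strict_transform_definition}. Together with the simplifying reduction made at the start of the subsection ($X_0 = A\cap X$ is smooth, connected and codimension two), this yields the required dimensional control on $X_i$ at every step and completes the proof.
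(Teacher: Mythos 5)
Your proof is correct but follows a genuinely different route from the paper's. The paper also begins with item \ref{center_contained_in_XE} of Lemma \ref{principalization_algorithm_bye} and rules out containment in $X_i$ by codimension (via a germ argument plus Theorem \ref{fully_contained}), but then it leans directly on the \emph{simple normal crossings} structure of $E_i$: in SNC coordinates near a point $q$ of $C_i^k$, both $C_i^k$ and every component $E'$ of $E_i$ through $q$ are coordinate hyperplanes $\mathcal{V}(w_1)$, $\mathcal{V}(w(E'))$, and the containment $\mathcal{V}(w_1)\subset\bigcup_{E'}\mathcal{V}(w(E'))$ forces $w_1$ to equal one of the $w(E')$, giving a germ equality $C^k_{i,q}=E'_q$ that is then propagated to a global equality by Theorem \ref{fully_contained}. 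Your argument bypasses the SNC coordinates altogether: you use irreducibility of $C_i^j$ to place it inside a single piece of the finite closed decomposition $X_i\cup\bigcup_k H_i^k$, discard the $X_i$ alternative by codimension, and finish with the open--closed--connected argument for a codimension-one closed smooth submanifold contained in a connected one of the same dimension. This is more elementary and topological, at the small cost of needing to observe that $C_i^j$, as a connected component of a smooth analytic subset, is indeed irreducible as an analytic set (so that the union decomposition pins it to one piece), and that each $H_i^k$ is connected. Both proofs are sound; the paper's makes essential use of item \ref{SNC_item} of Theorem \ref{posta}, whereas yours does not.
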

\begin{proof}
We have $C_{i}^k\subset C_i\subset X_i\cup E_i$ by item \ref{center_contained_in_XE} in Lemma \ref{principalization_algorithm_bye}. The $i$-th strict transform $X_i$ of $X_0=A\cap X$ is smooth and has codimension two since $X_0$ has the same properties and the strict transform preserves the smoothness and dimension of points. Hence by dimension reasons $C_{i}^k$ must be fully contained in $E_i$. In effect, if there is a point $p$ in $X_i- E_i$ such that $C_{i,p}^k\subset X_{i,p}$, then by Theorem \ref{fully_contained} $C_{i}^k\subset X_{i}$ which is absurd by dimension.


Recall that $C_{i}^k\subset E_i$ and $C_{i}^k$ has only simple normal crossings with $E_i$. By definition \ref{normal_crossing}, for any point $q$ in $C_{i}^k$ there is a coordinate neighbourhood $W_q$ of $q$ with real analytic coordinates $w_1,\ldots, w_n$ such that $C_{i}^k\cap W_x= \mathcal{V}(w_1)$ and for every irreducible component $E'$ of $E_i$ containing $q$ there is a coordinate $w_j=w(E')$ such that $E'\cap W_x= \mathcal{V}(w_j)$. Then, since $C_{i}^k\subset E_i$ we have
$$\mathcal{V}(w_1)\,\subset\,\bigcup_{E'<E_i}\,\mathcal{V}(w(E'))$$
and taking ideals we have
$$(w_1)\,\supset\,\left(\prod_{E'<E_i}\,w(E')\right)$$
hence $w_1$ must divide the product of coordinates
$$w_1\ |\ \prod_{E'<E_i}\,w(E')$$
where we have denoted by $E'<E_i$ the irreducible component $E'$ of $E_i$ containing $q$. Thus, $w_1$ must coincide with some $w(E')$ with $E'<E_i$ and we conclude that there is an irreducible component $E'$ of $E_i$ containing $q$ such that the germs $C_{i,q}^k= E'_q$. By Theorem \ref{fully_contained} again we have the equality $C_{i}^k= E'$ and we have the result.
\end{proof}

\begin{lema}\label{transversality}
An irreducible component $C_i^j$ of the center $C_i$ is transversal to an irreducible component $H_{ik}^m$ of the exceptional divisor $E_i$ if and only if $C_i^j$ is not contained in $H_{ik}^m$.
\end{lema}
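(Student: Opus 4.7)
The plan is to establish both directions via the simple normal crossings structure of the exceptional divisor, combined with the analytic continuation principle encoded in Theorem \ref{fully_contained}.

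First I would handle the easy direction. If $C_i^j \subset H_{ik}^m$, then at any point $p \in C_i^j$ (and such a point lies in the intersection), we have $T_p C_i^j \subset T_p H_{ik}^m$, hence $T_p C_i^j + T_p H_{ik}^m = T_p H_{ik}^m$, which is a proper subspace of $T_p A_i$ since $H_{ik}^m$ is a divisor. So transversality fails at every point of $C_i^j$.

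For the nontrivial direction, assume $C_i^j \not\subset H_{ik}^m$ and pick any $p \in C_i^j \cap H_{ik}^m$. By item \ref{SNC_item} of Theorem \ref{posta}, the center $C_i$ has simple normal crossings with $E_i$, so by Definition \ref{normal_crossing} there is a coordinate neighbourhood $W$ of $p$ with real analytic coordinates $w_1,\ldots,w_n$ such that $C_i^j \cap W = \mathcal{V}(w_1,\ldots,w_k)$ and $H_{ik}^m \cap W = \mathcal{V}(w_l)$ for some index $l$. A direct computation on the tangent spaces shows that transversality of $C_i^j$ and $H_{ik}^m$ at $p$ is equivalent to the linear independence of $dw_1,\ldots,dw_k,dw_l$ at $p$, which holds precisely when $l \notin \{1,\ldots,k\}$. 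In the remaining case $l \in \{1,\ldots,k\}$, one has the local inclusion of germs $(C_i^j)_p \subset (H_{ik}^m)_p$.

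The main step is now to upgrade this local inclusion to a global one and derive the desired contradiction. The component $C_i^j$ is smooth (being an irreducible component of the smooth center $C_i$), hence it is connected, pure dimensional and has irreducible germs at every one of its points, while $H_{ik}^m$ is a closed analytic subset of $A_i$ (via \eqref{isomorphism_irr_components}, since the $H_{jj}^k$ are closed). Therefore Theorem \ref{fully_contained} applies to the local inclusion $(C_i^j)_p \subset (H_{ik}^m)_p$ and yields $C_i^j \subset H_{ik}^m$, contradicting the standing hypothesis. Consequently $l \notin \{1,\ldots,k\}$ at every $p \in C_i^j \cap H_{ik}^m$, which is transversality.

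The only subtle point I expect is making sure $C_i^j$ really satisfies all the hypotheses of Theorem \ref{fully_contained}; this is where the general structure \eqref{general_structure} together with the smoothness of the centers in the principalization is essential. Once this is in place, the remainder is a direct application of simple normal crossings in local coordinates.
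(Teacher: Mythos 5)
Your proof is correct and essentially identical to the paper's: both use the simple normal crossings coordinates from item \ref{SNC_item} of Theorem \ref{posta} and Definition \ref{normal_crossing} to reduce transversality at an intersection point to a condition on coordinate indices, and both pass from germ containment to global containment via Theorem \ref{fully_contained}. You split the equivalence into two explicit directions and are more careful about verifying the hypotheses of Theorem \ref{fully_contained}, but the structure and the key ingredients are the same as in the paper.
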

\begin{proof}
Consider the irreducible components $C_i^j$ and $H_{ik}^m$ of $C_i$ and $E_i$ respectively. If their intersection is empty, then the result is trivially verified. Suppose that there is a point $p$ in their intersection. Since $C_i$ and $E_i$ have only simple normal crossings, by definition \ref{normal_crossing} there is a coordinate neighbourhood $W_p$ of $p$ in $A_i$ with real analytic coordinates $w_1,\ldots, w_n$ such that $C_i^j\cap W_x= \mathcal{V}(w_1,\ldots, w_r)$ and $H_{ik}^m\cap W_x= \mathcal{V}(w_s)$. Then their respective tangent spaces at $p$ read as follows
$$T_p\,C_i^j\,=\,\langle\,\partial_{w_{r+1}}|_p,\ldots,\partial_{w_{n}}|_p\,\rangle,$$
$$T_p\,H_{ik}^m\,=\,\langle\,\partial_{w_{1}}|_p,\ldots,\partial_{w_{s-1}}|_p,\,\partial_{w_{s+1}}|_p,\ldots,\partial_{w_{n}}|_p\,\rangle.$$
Thus, the intersection is transversal if and only if $w_s$ is not among the first $r$ coordinates, that is
$$T_p\,C_i^j\,+\,T_p\,H_{ik}^m\,=\,T_p A_i\qquad\mbox{iff}\qquad s>r.$$
By Theorem \ref{fully_contained}, $C_i^j\subset H_{ik}^m$ if and only if the germs $C_{i,\,p}^j\subset H_{ik,\,p}^m$ and this is verified if and only if $s\leq r$. This concludes the proof of the lemma.
\end{proof}

\begin{lema}\label{limpiando_la_cancha}
Suppose that $X$ has codimension two and $F$ is a smooth vector field on $X$. Then, there is a closed nowhere dense zero Lebesgue measure subset $S$ in $X$ such that for every point $p$ in $X-S$ there is a neighbourhood $W_p$ of $p$ in $M$ and a finite principalization on $W_p$ whose blowup sequence satisfies:
\begin{enumerate}
\item \label{field_extension} There is a smooth extension of the vector field $F$ to $W_p$.
\item \label{first_center} $C_0= W_p\cap X$. In particular, the strict transforms are empty and $C_i\subset E_i$ for every $i\geq 1$.
\item\label{maps_isomorphisms} For every $i$, the center $C_i$ is irreducible and the map $\pi^{i}|_{C_i}: C_i\rightarrow W_p\cap X$ is an isomorphism.
\item\label{fully_contained_item} If the center $C_i$ intersects some irreducible component of the exceptional divisor $E_i$, then it is fully contained in this component.
\item \label{regular_sequences} Every center $C_i$ is the zero locus of some strong regular sequence $\mathcal{R}_i$.
\end{enumerate}
\end{lema}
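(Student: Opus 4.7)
The plan is to begin with the functorial principalization of $\mathcal{I}_U$ on a precompact neighbourhood $A \ni p$, which by Proposition \ref{funct}(\ref{precompact_item}) factors as a finite blowup sequence $A_r \to \cdots \to A_0 = A$. I will construct $S$ as a finite union of closed, nowhere-dense, zero-measure subsets of $X$, each excluding one obstruction, and then show that on $X \setminus S$ the desired properties follow by restriction to a sufficiently small $W_p$, invoking the functoriality of Theorem \ref{posta}(\ref{funct_item}).

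To build $S$, I would proceed through the items. For item (\ref{first_center}), let $d$ be the generic order of $U$ along $X$ and let $S_1 \subset X$ be the proper analytic subset where $\mathrm{ord}_q(U) > d$; outside $S_1$ the maximum-order stratum of $\mathcal{I}_U$ is exactly $X$, so the functorial algorithm prescribes $C_0 = W_p \cap X$, and combined with Lemma \ref{principalization_algorithm_bye}(\ref{center_contained_in_XE}) plus the vanishing of the strict transforms $X_i$ for $i \geq 1$ this gives $C_i \subset E_i$. For item (\ref{maps_isomorphisms}), to each pair $(i,j)$ with $C_i^j$ an irreducible component of $C_i$ associate the proper real-analytic map $\phi_{i,j} = \pi^i|_{C_i^j} : C_i^j \to A \cap X$; let $S_2$ be the finite union of the critical values of those $\phi_{i,j}$ with $\dim C_i^j = \dim X$, which are closed, nowhere dense and of measure zero by Corollary \ref{Sard}, together with the images $\phi_{i,j}(C_i^j)$ for those components with $\dim C_i^j < \dim X$ (proper analytic subsets of $X$). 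For item (\ref{fully_contained_item}), let $S_3$ be the union over triples $(i, j, H)$ with $H$ an irreducible component of $E_i$ satisfying $C_i^j \cap H \neq \emptyset$ and $C_i^j \not\subset H$ of the closed analytic sets $\phi_{i,j}(C_i^j \cap H)$, of strictly smaller dimension than $X$ thanks to Theorem \ref{fully_contained}. Set $S = S_1 \cup S_2 \cup S_3$.

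Now fix $p \in X \setminus S$ and choose $W_p$ with $\overline{W_p} \cap X \subset (A \cap X) \setminus S$; item (\ref{field_extension}) is provided by Lemma \ref{extension} after further shrinking. By functoriality, the blowup sequence on $W_p$ is the restriction of that on $A$. The exclusion of $S_2$ ensures that every $\phi_{i,j}$ with $\dim C_i^j = \dim X$ is a local diffeomorphism over $W_p \cap X$, while components with $\dim C_i^j < \dim X$ do not meet the preimage of $W_p$. A further shrinking arranges that a single component of each $C_i$ survives over $W_p$ and maps as a global diffeomorphism onto $W_p \cap X$, yielding item (\ref{maps_isomorphisms}). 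Item (\ref{fully_contained_item}) is immediate, since a violation would place $p$ in $S_3$. Item (\ref{regular_sequences}) follows from item (\ref{maps_isomorphisms}) and the implicit function theorem: each surviving $C_i$ is a smooth closed submanifold of $W_p$, globally cut out on a possibly smaller $W_p$ by a strong regular sequence.

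The main obstacle I anticipate is the irreducibility demand in item (\ref{maps_isomorphisms}): when the functorial algorithm produces a center $C_i$ whose irreducible components all surject onto $X$, merely shrinking $W_p$ does not reduce their number over $W_p \cap X$. A likely remedy is to locally modify the principalization, blowing up one component of $C_i$ at a time; because distinct components of $C_i$ meet in loci of strictly lower dimension, whose images are already absorbed into $S_3$, the components are disjoint on $W_p$ and a sequence of single-component blowups produces the same transformed ideal as the original multi-component step. This replacement yields a new finite principalization on $W_p$ still satisfying items (\ref{first_center}), (\ref{fully_contained_item}) and (\ref{regular_sequences}) inductively. The finiteness of the blowup sequence is crucial throughout, since it ensures that the Sard-type exclusions iterated along the sequence produce only finitely many strata in $S$, so $S$ remains closed, nowhere dense and of zero Lebesgue measure in $X$.
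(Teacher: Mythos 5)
Your architecture mirrors the paper's --- Sard-type exclusion sets built from the proper maps $\pi^i$ on the centers and on their intersections with exceptional components, a final shrinking to a small $W_p$, and the one-component-at-a-time blowup modification to force irreducibility in item (\ref{maps_isomorphisms}). But the argument for item (\ref{first_center}) has a genuine gap. You claim that excluding the locus $S_1$ where $\mathrm{ord}_q(U)$ exceeds its generic value along $X$ forces the algorithm to choose $C_0 = W_p\cap X$. This does not follow: the center at each step of the W\l{}odarczyk/Bierstone--Milman algorithm is the maximum stratum of a \emph{full} resolution invariant, of which the order of $\mathcal{I}_U$ is only the leading entry, and the subsequent entries (orders of successive coefficient ideals on maximal contact hypersurfaces) can jump along $X$ even where $\mathrm{ord}_q(U)$ is constant. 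Concretely, take $U(x,y,z)= x^2 + y^2\,(y^2 + z^2)$ on $\R^3$: its zero locus is the $z$-axis, a codimension-two regular component with $\mathrm{ord}_q(U)\equiv 2$, so your $S_1$ is empty; yet the coefficient ideal of $U$ on $\{x=0\}$ is $(y^2(y^2+z^2))$, whose order jumps from $2$ to $4$ at the origin, so the algorithm's first center near ${\bf 0}$ is ${\bf 0}$ itself and not the $z$-axis, contradicting your claim at the point $p={\bf 0}\in X-S_1$.

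The paper avoids committing to the algorithm's internals. It lets the principalization run, takes the embedded-desingularization index $j$ of Proposition \ref{embedded_des} (the first step at which the strict transform $X_j$ becomes a component of the center), sets $S^1=\pi^j(X_j\cap E_j)$, and proves (its ``third claim'') that on any precompact $A\subset M-S^1$ every center of index $<j$ is forced to be empty --- otherwise the exceptional component it creates would meet $X_j$ and its projection would land in $S^1$ --- so that after dropping empty blowups $C_0=A\cap X$. You should replace $S_1$ by a set of this nature; an invariant-based exclusion would require tracking the entire resolution invariant, not just $\mathrm{ord}_q(U)$. A secondary omission: you never dispose of codimension-one irreducible components of the centers, for which $\dim C_i^j > \dim X$ and $\phi_{i,j}(C_i^j)$ may cover all of $X$, so your $S_2$ is not a proper subset. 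The paper's ``zeroth claim'' handles this via Lemma \ref{limpiando_1}, showing any such component coincides with an existing exceptional component, so the blowup is an isomorphism that may simply be omitted; you need this reduction before your Sard arguments apply.
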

\begin{proof}
For every precompact neighbourhood $A$ in $M$, consider the principalization on $A$ and the respective $j$ index in Proposition \ref{embedded_des}. Suppose that $A$ is small enough such that $A\cap X$ is irreducible.

Our \textbf{zeroth claim} is that, without loss of generality, we may assume that every irreducible component of the centers has codimension greater than or equal to two. In effect, because of Lemma \ref{limpiando_1}, if there is a codimension one irreducible component $C_{i}^k$ of the center $C_i$, then it coincides with some irreducible component $H_{il}^m$ of the exceptional divisor $E_i$. Then, the blowup along $C_{i}^k$ is an isomophism and by \eqref{gen_str_2} and \eqref{isomorphism_irr_components} we have
$$H_{i+1,\,i+1}^k\,=\,H_{i+1,\,l}^m.$$
In particular, the blowup along $C_{i}^k$ is an isomorphism adding no new irreducible component to the exceptional divisor hence it could be omitted since it would have the same effect as an empty blowup.



Our \textbf{first claim} is that $X_j$, the $j$-th strict transform of $X_0=A\cap X$, is not contained in any irreducible component $H_{ji}^m$, $i\leq j$ of $E_j$. Indeed, suppose that there is an index $i\leq j$ such that $X_j\subset H_{ji}^m$. Then by the isomorphism \eqref{isomorphism_irr_components}, $X_i\subset H_{ii}^m$ hence projecting with $\pi_i$ and recalling \eqref{gen_str_2}, we conclude that $X_{i-1}\subset C_{i-1}^m$ which is absurd since $j$ is the minimum index with this property.

Our \textbf{second claim} is that $X_j$ is transversal to any irreducible component $H_{ji}^m$, $i\leq j$ of $E_j$. This follows directly from the first claim and Lemma \ref{transversality} since by Proposition \ref{embedded_des}, $X_j$ is an irreducible component of $C_j$.

Therefore, if $X_j\cap H_{ji}^m$ is non empty, then it is a codimension greater than two submanifold hence, by a similar argument as in Corollary \ref{Sard}, its image under $\pi^j$ is a closed nowhere dense zero Lebesgue measure subset in $A\cap X$ and so is the finite union
$$S_A^1=\bigcup_{i\leq j,\,m}\,\pi^j(X_j\cap H_{ji}^m)= \pi^j(X_j\cap E_j).$$
By the functoriality of the principalization,
recall that this notion includes item \ref{funct_item} in Theorem \ref{posta}, by a similiar argument as in item \ref{pegado_local_global} in Proposition \ref{funct}, for any other precompact neighbourhood $A'$ in $M$ we have
$$S_{A\cap A'}^1\,=\,S_A^1\cap S_{A'}^1.$$
Hence, because $M$ is locally compact and second axiom, these sets define a closed nowhere dense zero Lebesgue measure subset $S^1$ in $X$ such that $S^1\cap A= S_A^1$ for every precompact neighbourhood $A$ in $M$.

Our \textbf{third claim} is that, omitting the empty blowups, the principalization restricted to any precompact open set $A$ in $M-S^1$ with nonempty intersection with $X$ has first center $C_0=A\cap X$.

Indeed, suppose that there is an index $0\leq i < j$ such that $C_i$ is nonempty and let it be the minimum index with this property. Then, by item \ref{center_contained_in_XE} in Lemma \ref{principalization_algorithm_bye} and the fact that $E_i$ must be the empty set by equations \eqref{general_structure} because there is no previous non empty centers to generate it, we have that $C_i\subset X_i$ hence $H_{ji}^m\cap X_j$ is nonempty for some irreducible component $H_{ji}^m$ of $E_j$. Since $H_{ji}^m\cap X_j\subset E_{j}\cap X_j$, we have that $S^1\cap A=S_A^1=\pi^j(X_j\cap E_j)$ is nonempty as well which is absurd since $A$ does not meet $S^1$ by hypothesis. In particular, all of the blowups $\pi_j,\,\ldots,\,\pi_1$ in the principalization are along empty centers hence they are isomorphisms which can be identified with the identity therefore $X_j=X_0=A\cap X$. Recall that $j$ is the index in Proposition \ref{embedded_des} hence, since $A\cap X$ is irreducible by hypothesis and coincides with $X_j$, we conclude that $C_j=X_j=A\cap X$. Omitting the empty blowups and reindexing the blowup sequence, we conclude the proof of the claim.


Now we construct another subset $S^2\subset X-S^1$ as follows. For every precompact neighbourhood $A$ in $M-S^1$, every irreducible component $C_{i}^j$ of the center $C_i$ and every irreducible component $H_{ik}^m$ of the divisor $E_i$, consider the real analytic proper map
$$\phi_{A,i,j,k,m}\,=\,\pi^{i}|_{C_{i}^j\cap H_{ik}^m}:C_{i}^j\cap H_{ik}^m \rightarrow A\cap X$$
and its set of critical values $S_{A,i,j,k,m}\subset A\cap X$. Again by item \ref{preimage_of_X0} in Lemma \ref{principalization_algorithm_bye}, this map is well defined.

Because $C_{i}^j$ has only simple normal crossings with $H_{ik}^m$, if their intersection is non empty, then either their intersection is transversal or there is a point $p$ such that $C_{i,p}^j\subset H_{ik,p}^m$ and by Theorem \ref{fully_contained}, $C_{i}^j\subset H_{ik}^m$. If the first alternative holds, that is to say their intersection is transversal, then the intersection is a codimension greater than two submanifold hence $S_{A,i,j,k,m}=\mbox{im}(\phi_{A,i,j,k,m})$.

Define the set $S_A^2$ as the union of all the critical sets $S_{A,i,j,k,m}$:
$$S_A^2\,=\,\bigcup_{i,j,k,m}\,S_{A,i,j,k,m}.$$
Because it is a finite union, by Corollary \ref{Sard} it is clear that $S_A^2$ is a closed nowhere dense zero Lebesgue measure subset in $A\cap X$. Because of the functoriality, we have the relation
$$S_{A\cap A'}^2\,=\,S_A^2\cap S_{A'}^2$$
hence, because $M-S^1$ is locally compact and second axiom, these sets define a closed nowhere dense zero Lebesgue measure subset $S^2$ in $X-S^1$ such that $S^2\cap A=S_A^2$ for every precompact neighbourhood $A$ in $M$.

Define the subset $S\,=\,S^1\cup S^2$ in $X$ and consider a precompact neighbourhood $A$ in $M-S$ such that $A\cap X$ is non empty.

Our \textbf{fourth claim} is that, omitting the empty blowups, the principalization on $A$ has the following properties:
\begin{enumerate}
\item $C_0=A\cap X$. In particular, the strict transforms of $A\cap X$ are empty and $C_i\subset E_i$ for every $i\geq 1$.
\item For every $i\geq 1$, if some irreducible component $C_{i}^j$ of the center $C_i$ has non empty intersection with some irreducible component $H_{ik}^m$ of the divisor $E_i$, then $C_{i}^j\subset H_{ik}^m$.
\item Every map $\phi_{A,i,j,k,m}$ with $C_{i}^j\cap H_{ik}^m$ nonempty is a locally trivial fibration.
\end{enumerate}
\begin{proof}[Proof of the fourth claim]
\ \linebreak
\begin{enumerate}
\item The first statement of the item is the third claim. The second statement follows directly from the definition of the strict transform \eqref{strict_transform_definition} since
$$X_1=\overline{\pi_1^{-1}\left((A\cap X)-C_0\right)}$$
is clearly empty hence the following strict transforms are empty as well. Therefore, by item \ref{center_contained_in_XE} in Lemma \ref{principalization_algorithm_bye}, we have that $C_i\subset E_i$ for every $i\geq 1$ and this proves the first item.
\item If $C_{i}^j\cap H_{ik}^m$ is nonempty and $C_{i}^j$ is not contained in $H_{ik}^m$, then as explained before the intersection is transversal and its image by $\phi_{A,i,j,k,m}$ coincides with $S_{A,i,j,k,m}$ which is nonempty and contained in $S$ and $A\cap X$ as well. The last conclusion is absurd since $A$ does not meet $S$ by hipothesis and this proves the second item.
\item If $C_{i}^j\cap H_{ik}^m$ is nonempty, then by the previous item the map $\phi_{A,i,j,k,m}$ has domain the whole irreducible component $C_{i}^j$,
$$\phi_{A,i,j,k,m}:\,C_{i}^j\rightarrow A\cap X.$$
Because $A$ does not meet $S$, in particular it does not meet the critical set $S_{A,i,j,k,m}$ either hence $\phi_{A,i,j,k,m}$ is a regular map. Because $C_{i}^j$ has codimension at least two by the zeroth claim, $A\cap X$ has codimension two by hypothesis and $\phi_{A,i,j,k,m}$ is a regular map, we conclude that $C_{i}^j$ has codimension two as well and $\phi_{A,i,j,k,m}$ is a proper local isomorphism. The item now follows by Ehresmann's Theorem and this concludes the proof of the fourth claim.
\end{enumerate}
\end{proof}

So far we have achieved the second and fourth items in the statement of the Lemma since these are respectively the first two items of the fourth claim.

Consider a point $p$ in $X-S$. There is a precompact neighbourhood $W_p$ of $p$ in $M-S$ such that $W_p\cap X$ is a common trivializing neighbourhood of the mentioned locally trivial fibrations, that is to say every map
$$\pi^{i}|_{C_{i}^j}: C_{i}^j\rightarrow W_p\cap X$$
is a trivial fibration where $C_{i}^j$ denotes the $j$-th irreducible component of $C_i$. Blowing up one connected component of the fibrations at a time and reordering, we get the irreducibility of the new centers and the isomorphisms
$$\pi^{i}|_{C_i}: C_i\rightarrow W_p\cap X.$$
This constitutes the third item in the statement of the Lemma.

Finally, locally the vector field $F$ admits a smooth extension hence taking the neighbourhood $W_p$ small enough, the first item in the statement of the Lemma is achieved. Again, locally every center is the locus of some strong regular sequence hence taking the neighbourhood $W_p$ small enough we get the fifth item in the statement of the Lemma as well and this concludes the proof.
\end{proof}

It is important to remark that in the last paragraph of the proof the functoriality was not used and the penultimate paragraph in the previous proof, that is the blowing up of one irreducible component of the centers at a time, is the first and only time whereat we break the functoriality of the principalization. However, the functoriality is no longer needed nor used in the rest of the proof hence there is no harm in doing this.

Along the rest of the proof, we suppose that $X$ has codimension two and \textbf{we fix the strong regular sequences $\mathcal{R}_i$} given in item \ref{regular_sequences} in Lemma \ref{limpiando_la_cancha}. As it was explained at the end of subsection \ref{connection}, these sequences define an Ehresmann connection on every irreducible component of the exceptional divisor $E_{W_p}$. For notational convenience, \textbf{we may further restrict $M$ to the neighbourhood $W_p$ in Lemma \ref{limpiando_la_cancha} and assume that there is a finite principalization on $M$ satisfying the items in the Lemma,}
\begin{equation}\label{principalization}
\textit{\textsf{prin}}_{\mathcal{I}_U}\,:\,\tilde{M}=M_r\xrightarrow{\pi_r} \ldots\rightarrow M_2\xrightarrow{\pi_2} M_1\xrightarrow{\pi_1} M_0= M.
\end{equation}
Since the centers are irreducible, we suppress the super index in the notation for the irreducible components of the centers and the exceptional divisors.
Recall that $X$ is a smooth codimension two connected and closed real analytic submanifold.

Now suppose that the vector field $F$ on $X$ is tangent to it and define the smooth vector field $F_i$ on the center $C_i$ as the unique field whose pushout by the isomorphism $\pi^{i}_0|_{C_i}$ is the original $F$, that is to say
$$(\pi^{i}_0|_{C_i})_*\,(F_i)=F.$$
By item \ref{maps_isomorphisms} in Lemma \ref{limpiando_la_cancha}, the vector fields $F_i$ are well defined.

Recall that in view of Lemma \ref{limpiando_la_cancha}, since the centers are irreducible the components $H_{ij}$ are irreducible as well and we have suppressed the unnecessary super index. In the following lemma we make the usual identification of the irreducible component $H_{ik}$ of $E_i$ with the corresponding one $H_{kk}$ in $E_k$, that is to say we identify the isomorphism \eqref{isomorphism_irr_components}
$$\pi^i_k|_{H_{ik}}:H_{ik}\rightarrow H_{kk}$$
with the identity map. In particular, if $C_i\subset H_{ik}$, then this identification implies $C_i\subset H_{kk}$ and $F_i=F_k$.

\begin{lema}\label{Induction_Up}
There is a smooth vector field $\tilde{V}$ in $\tilde{M}$ tangent to the exceptional divisor $\tilde{E}$ such that on every irreducible component $H_k$ of $\tilde{E}$, identifying this component with the corresponding one in $E_k$, the field is the sum of the horizontal lifting of $F_{k-1}$ on $C_{k-1}$ and a vertical field.
\end{lema}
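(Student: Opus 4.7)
The plan is to construct $\tilde{V}$ by induction on the blowup step $i$, producing for each $i = 0, 1, \ldots, r$ a smooth vector field $V_i$ on $M_i$ with the following properties: (a) $V_i$ is tangent to every irreducible component of $E_i$, and for $i < r$ tangent to $C_i$ with $V_i|_{C_i} = F_i$; (b) on each irreducible component $H_{ik}$ of $E_i$, under the identification $H_{ik}\cong H_{kk}$ of \eqref{isomorphism_irr_components}, the restriction $V_i|_{H_{ik}}$ decomposes as the horizontal lift of $F_{k-1}$ on $C_{k-1}$ (with respect to the Ehresmann connection fixed by the strong regular sequence $\mathcal{R}_{k-1}$) plus a vertical vector field. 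Setting $\tilde{V} := V_r$ then yields the lemma.

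For the base case $i = 0$, apply Lemma \ref{extension} to the submanifold $X=C_0$ and the tangent vector field $F_0 = F$, obtaining a smooth extension to a neighbourhood of $X$ in $M_0$, which a bump function globalises to $M_0$; property (b) is vacuous since $E_0 = \emptyset$. For the inductive step from $i-1$ to $i$, the tangency to $C_{i-1}$ furnished by (a) allows Lemma \ref{Pullback} to produce a smooth pullback $V_i' := (V_{i-1})^*$ on $M_i$. This field is automatically tangent to $H_{ii}$ and restricts there as the horizontal lift of $V_{i-1}|_{C_{i-1}} = F_{i-1}$ plus a vertical field, which is precisely (b) at $k=i$. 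Off $H_{ii}$ the map $\pi_i$ is an isomorphism, so tangency to components and the decomposition (b) at $k<i$ transfer from $V_{i-1}$ to $V_i'$ via the canonical identifications $H_{ik}\cong H_{i-1,k}$, and extend across the intersection $H_{ik}\cap H_{ii}$ by continuity.

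It remains (for $i < r$) to adjust $V_i'$ so that $V_i|_{C_i} = F_i$ without spoiling (a) or (b). By item \ref{fully_contained_item} of Lemma \ref{limpiando_la_cancha}, the irreducible $C_i$ lies inside a unique component $H_{ik_0}$, and under the identification $H_{ik_0}\cong H_{k_0 k_0}$ the isomorphism $\pi^i_{k_0-1}|_{C_i}:C_i\to C_{k_0-1}$ identifies $F_i$ with $F_{k_0-1}$. Therefore the horizontal-lift component of $V_i'|_{H_{ik_0}}$ restricted to the section $C_i$ already equals $F_i$, and the discrepancy $U := F_i - V_i'|_{C_i}$ is a smooth vertical vector field along $C_i$. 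Lemma \ref{extension2}, applied at the step producing $H_{k_0 k_0}$ and transported up through the tower by successive applications of Lemma \ref{Pullback}, extends $U$ to a smooth $\hat{U}$ on a neighbourhood of $H_{ik_0}$ in $M_i$ that is vertical on $H_{ik_0}$; cutting off with a bump function yields a globally defined $\hat{U}$ tangent to $E_i$ and equal to $U$ along $C_i$. Setting $V_i := V_i' + \hat{U}$, the purely vertical correction preserves the horizontal-lift components in (b), keeps the tangencies in (a) intact, and forces $V_i|_{C_i} = F_i$.

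The principal technical obstacle lies in the correction step: arranging $\hat{U}$ so that it simultaneously remains tangent to every irreducible component of $E_i$ that meets $H_{ik_0}$ near $C_i$, while being vertical on $H_{ik_0}$ and equal to $U$ along $C_i$. This is resolved by working in local coordinates supplied by the simple normal crossings hypothesis (item \ref{regular_sequences} of Lemma \ref{limpiando_la_cancha} together with Definition \ref{normal_crossing}): in such coordinates every component of $E_i$ is a coordinate hyperplane, and the conditions "vertical on $H_{ik_0}$" and "tangent to every other coordinate hyperplane" reduce to compatible linear constraints that admit smooth solutions supported in an arbitrarily small neighbourhood of $C_i$.
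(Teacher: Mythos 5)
Your proposal follows the same inductive scheme as the paper: pull back at each step via Lemma \ref{Pullback} and then add a cut-off vertical correction, using Lemma \ref{extension2}, to force tangency to the next center while preserving the horizontal-plus-vertical decomposition on the exceptional components. However, there is a genuine gap in the adjustment step. You assert, citing item \ref{fully_contained_item} of Lemma \ref{limpiando_la_cancha}, that ``the irreducible $C_i$ lies inside a \emph{unique} component $H_{ik_0}$.'' That item only says that $C_i$ is fully contained in \emph{each} component it meets; it does not rule out that $C_i$ lies in the intersection of two distinct components $H_{ik_0}$ and $H_{ik_1}$. The paper treats this explicitly as a separate case (its Case 1), and for good reason: when $C_i\subset H_{ik_0}\cap H_{ik_1}$, the pullback $V_i'$ is tangent to both components and hence, by a dimension count, already tangent to $C_i$ and already equal to $F_i$ there, so \emph{no} correction is performed. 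Your argument, as written, would still try to build a cut-off vertical field $\hat U$ supported near $C_i$; but in this two-component situation $C_i$ cannot be separated from the second component $H_{ik_1}$ by the bump function, and a nonzero $\hat U$ vertical for the fibration of $H_{ik_0}$ need not be tangent to $H_{ik_1}$, which would destroy property~(a).

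The gap is repairable but requires the observation you omit: if $C_i$ is contained in two components, then the discrepancy $U=F_i-V_i'|_{C_i}$ is simultaneously tangent to $C_i$ and vertical with respect to $\pi_{k_0}|_{H_{k_0k_0}}$; since $\pi_{k_0}|_{C_i}$ is an isomorphism onto $C_{k_0-1}$, the tangent space to $C_i$ meets the vertical distribution trivially, hence $U=0$ and no correction is needed. Once the two cases are separated, the situation you describe in your final paragraph (``components of $E_i$ that meet $H_{ik_0}$ near $C_i$'') does not in fact arise in the genuine one-component case: by item \ref{fully_contained_item}, a center contained in only one component is \emph{disjoint} from every other component of $E_i$, so the bump function support can simply be chosen to avoid them entirely, exactly as the paper does. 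There is no system of ``compatible linear constraints'' to solve; the normal-crossings local model is not needed for this step.

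One smaller point: the sentence ``the horizontal-lift component of $V_i'|_{H_{ik_0}}$ restricted to the section $C_i$ already equals $F_i$'' is imprecise. The horizontal lift of $F_{k_0-1}$ at a point of $C_i$ lies in the horizontal distribution, not in $T C_i$. The correct statement, and the one that proves verticality of $U$, is that both $F_i$ and $V_i'|_{C_i}$ push down under $d\pi_{k_0}$ to $F_{k_0-1}$, hence their difference lies in $\ker d\pi_{k_0}$. Your conclusion is right, but the stated reason conflates ``horizontal'' with ``tangent to $C_i$.''
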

\begin{proof}
A smooth vector field $V_l$ in $M_l$ satisfies the property $P(l)$ if:
\begin{enumerate}
\item It is tangent to the center $C_l$ and restricted to it coincides with $F_l$.
\item It is tangent to every irreducible component $H_{lk}$ of the exceptional divisor $E_l$ and restricted to each one of them, identifying this component with the corresponding one $H_{kk}$ in $E_k$, the field is the sum of the horizontal lifting of $F_{k-1}$ on $C_{k-1}$ and a vertical field.
\end{enumerate}

The proof is by induction on the depth parameter of the principalization.
\bigskip

\noindent\textbf{Basis case:} By item \ref{field_extension} in Lemma \ref{limpiando_la_cancha} there is a smooth vector field $\hat{V}$ in $M$ extending the field $F$ in $X$, that is to say $\hat{V}|_X=F$. By item \ref{first_center} in Lemma \ref{limpiando_la_cancha} $C_0=X$ and because there is no exceptional divisor at this stage and $F$ is tangent to $X$ by hypothesis, $V_0=\hat{V}$ satisfies $P(0)$.
\bigskip

\noindent\textbf{Induction step:} Let $V_l$ be a smooth vector field in $M_l$ satisfying $P(l)$. By Lemma \ref{Pullback} there is a smooth vector field $V_l^{*}$, the pullback of $V_l$ by the blowup $\pi_{l+1}$, defined on $M_{l+1}$.
\bigskip

\noindent\textbf{Claim:} \textit{The vector field $V_l^{*}$ satisfies the second item of $P(l+1)$.}

It is clear by Lemma \ref{Pullback} that $V_l^{*}$ satisfies the second item on the new irreducible component $H_{l+1,\,l+1}$ of $E_{l+1}$. On any other irreducible component $H_{l+1,\,j}$, the blowup $\pi_{l+1}$ is an isomorphism over its image,
$$\pi_{l+1}|_{H_{l+1,\,j}}:\,H_{l+1,\,j}\xrightarrow{\cong} H_{lj},\qquad j<l+1,$$
hence $V_l^{*}$ satisfies the second item as well on $H_{l+1,\,j}$ since $V_l$ does so on $H_{lj}$ and this proves the claim.
\bigskip

By item \ref{first_center} in Lemma \ref{limpiando_la_cancha}, there are no non empty strict transforms of $X$ therefore, the center $C_{l+1}$ is contained in the exceptional divisor $E_{l+1}$ and by item \ref{fully_contained_item} in the same lemma, $C_{l+1}$ must be fully contained in some irreducible component of $E_{l+1}$. By dimension and the fact that $C_{l+1}$ has only simple normal crossings with $E_{l+1}$, there are only two possible cases:
\bigskip

\noindent\textbf{Case1:} \textit{The center $C_{l+1}$ belongs to two different irreducible components $H_{l+1,\,i}$ and $H_{l+1,\,j}$ of $E_{l+1}$.}

Because $V_l^{*}$ is tangent to $H_{l+1,\,i}$ and $H_{l+1,\,j}$, by dimension it is also tangent to $C_{l+1}$. Under the usual identification of the irreducible component $H_{l+1,\,i}$ of $E_{l+1}$ with the respective one $H_{ii}$ of $E_{i}$, the field $V_l^{*}$ is the sum of a vertical field and the horizontal lifting of $F_{i-1}$ on $C_{i-1}$. Moreover, $V_l^{*}$ coincides with $F_{l+1}=F_{i}$ on $C_{l+1}$ and the last equality is due to the previous identification. Indeed,
$$(\pi_i|_{C_{l+1}})_*\,(V_l^{*}|_{C_{l+1}})=F_{i-1}$$
and by definition of $F_i$ and $F_{i-1}$, this proves the claim. The same calculation with $H_{l+1,\,j}$ instead of $H_{l+1,\,i}$ gives the same conclusion therefore there is no ambiguity. 

Then, the field $V_{l+1}=V_l^{*}$ satisfies $P(l+1)$.
\bigskip

\noindent\textbf{Case2:} \textit{The center $C_{l+1}$ belongs to only one irreducible component $H_{l+1,\,i}$ of $E_{l+1}$.}

The map $\pi^{l+1}_{i-1}|_{C_{l+1}}:C_{l+1}\rightarrow C_{i-1}$ is an isomorphism for
$$\pi^{l+1}_{i-1}|_{C_{l+1}}\circ \pi^{i-1}_0|_{C_{i-1}}= \pi^{l+1}_0|_{C_{l+1}}$$
and the maps $\pi^{j}_0|_{C_{j}}$ are isomorphisms for every $j$ by item \ref{maps_isomorphisms} in Lemma \ref{limpiando_la_cancha}.

As usual, identifying the irreducible component $H_{l+1,\,i}$ of $E_{l+1}$ with the respective one $H_{ii}$ of $E_{i}$, we have that $C_{l+1}$ is isomorphic to $C_{i-1}$ by $\pi_i|_{C_{l+1}}$. In particular, the center $C_{l+1}$ is the image of a smooth section $\mu$ of the fibration $\pi_i: H_{ii}\rightarrow C_i$. Because $\pi_i\,\mu=id$, we have that $(\pi_i\,\mu)_*$ and $id-(\pi_i\,\mu)_*$ are orthogonal central idempotent operators on the space of vector fields on $C_{l+1}$ giving a unique splitting of the field in its tangent and vertical components respectively. In particular, there is a unique splitting on $C_{l+1}$
$$V_l^{*}= V_l^{*,\,tg}+ V_l^{*,\,vert}$$
in the components tangent to and vertical on $C_{l+1}$ respectively where
$$(\pi_i\,\mu)_* V_l^{*}= V_l^{*,\,tg},$$
$$\left(id-(\pi_i\,\mu)_*\right) V_l^{*}= V_l^{*,\,vert}.$$

By Lemma \ref{extension2}, there is a neighborhood $W$ of $H_{l+1,\,i}=H_{ii}$ in $M_{l+1}$ and a vector field $V_{l+1}^{\,vert}$ defined on $W$ such that restricted to $H_{l+1,\,i}$ is vertical and extends $V_l^{*,\,vert}$.

Consider a smooth bump function $\rho: M_{l+1}\rightarrow [0,1]$ satisfying the following:
\begin{enumerate}
\item Its support is contained in $W$.
\item Its support is disjoint from every other irreducible component in $E_{l+1}$ different from $H_{l+1,\,i}$.
\item It equals one on some neighbourhood of the center $C_{l+1}$.
\end{enumerate}
These conditions are achieved since the center $C_{l+1}$ is a submanifold which belongs only to the irreducible component $H_{l+1,\,i}$ by hypothesis hence it is disjoint from any other irreducible component by item \ref{fully_contained_item} in Lemma \ref{limpiando_la_cancha}.

Define the following smooth vector field, now defined on the whole $M_{l+1}$ extending it by zero outside $W$:
$$Vert_{\,l+1}= -\rho\,V_{l+1}^{\,vert}.$$
It is clear that, restricted to $H_{l+1,\,i}$ the vector field $Vert_{\,l+1}$ is vertical, it is zero on any other irreducible component in $E_{l+1}$ different from $H_{l+1,\,i}$ and
$$V_{l+1}= V_l^{*} + Vert_{\,l+1}$$
is tangent to $C_{l+1}$ for
$$V_{l+1, p}= V_{l, p}^{*} + Vert_{\,l+1, p}= V_{l, p}^{*} - V_{l, p}^{*,\,vert}= V_{l, p}^{*,\,tg}$$
for every $p$ in $C_{l+1}$.

Moreover, $V_{l+1}$ coincides with $F_{l+1}=F_{i}$ on $C_{l+1}$ and the last equality is due to the previous identification. Indeed,
$$(\pi_i|_{C_{l+1}})_*\,(V_{l+1}|_{C_{l+1}})=(\pi_i|_{C_{l+1}})_*\,(V_l^{*}|_{C_{l+1}})=F_{i-1}$$
and by definition of $F_i$ and $F_{i-1}$, this proves the claim.

Then, the field $V_{l+1}$ satisfies $P(l+1)$. This concludes the induction argument.
\bigskip

Finally, because $V_r$ satisfies $P(r)$, defining $\tilde{V}$ as the vector field $V_r$ on $M_r= \tilde{M}$, we have the result.
\end{proof}

\begin{lema}\label{Induction_downstairs}
There is a locally Lipschitz continuous vector field $V$ on $M$ such that it is the pushout of the smooth vector field $\tilde{V}$ in $\tilde{M}$ in Lemma \ref{Induction_Up} by the principalization $\textsf{prin}_{\mathcal{I}_U}$ in expression \eqref{principalization} and coincides with the vector field $F$ on $X$.
\end{lema}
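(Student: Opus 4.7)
The strategy is a reverse induction along the finite blowup tower \eqref{principalization}, systematically applying the pushout Lemma \ref{Pushout} one blowup at a time. The induction hypothesis $Q(l)$, for $0\le l\le r$, asserts the existence of a locally Lipschitz continuous vector field $V_l$ on $M_l$ which is tangent to every irreducible component $H_{lj}$ of the exceptional divisor $E_l$ and such that, under the canonical identification $\pi^l_j|_{H_{lj}}:H_{lj}\xrightarrow{\cong}H_{jj}$ of \eqref{isomorphism_irr_components}, the restriction $V_l|_{H_{lj}}$ is the sum of the horizontal lift of $F_{j-1}$ on $C_{j-1}$ and a vertical vector field for the fibration $H_{jj}\to C_{j-1}$, where the horizontal/vertical decomposition is the one induced by the strong regular sequence $\mathcal{R}_{j-1}$ fixed in item \ref{regular_sequences} of Lemma \ref{limpiando_la_cancha}. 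The base case $Q(r)$ is the content of Lemma \ref{Induction_Up}: set $V_r=\tilde V$, which is smooth and hence locally Lipschitz.

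For the inductive step, assume $Q(l)$. We apply Lemma \ref{Pushout} to $V_l$ and the blowup $\pi_l:M_l\to M_{l-1}$ along the center $C_{l-1}$; the condition of that lemma is exactly the specialization of $Q(l)$ to $j=l$, namely that $V_l$ is locally Lipschitz, tangent to $H_{ll}=\pi_l^{-1}(C_{l-1})$, and equals there the horizontal lift of $F_{l-1}$ on $C_{l-1}$ plus a vertical field. We define $V_{l-1}:=(V_l)_{\ast}$, a locally Lipschitz continuous vector field on $M_{l-1}$ tangent to $C_{l-1}$ with $V_{l-1}|_{C_{l-1}}=F_{l-1}$. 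It remains to check that $V_{l-1}$ also verifies condition (i) of $Q(l-1)$ on every older component $H_{l-1,j}$, with $j<l$. Since $\pi_l$ is an isomorphism away from $H_{ll}$ and by \eqref{gen_str_3} restricts to an isomorphism $\pi_l|_{H_{l,j}}:H_{l,j}\xrightarrow{\cong}H_{l-1,j}$ that is compatible with the identifications $\pi^l_j|_{H_{l,j}}=\pi^{l-1}_j\circ\pi_l|_{H_{l,j}}$, the pushout transports the horizontal-plus-vertical structure of $V_l|_{H_{l,j}}$ to the same structure for $V_{l-1}|_{H_{l-1,j}}$ at every point outside $C_{l-1}$. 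The identity then extends to all of $H_{l-1,j}$ by local Lipschitz continuity of $V_{l-1}$, closing the induction.

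The main delicate point is this last compatibility check: a priori the horizontal/vertical decomposition on $H_{jj}$ is defined via the Ehresmann connection fixed by $\mathcal{R}_{j-1}$, and one must verify that transport by $\pi_l|_{H_{l,j}}$ respects this connection. This is immediate from the fact that $\pi_l|_{H_{l,j}}$ is the identity map under the identification used in the statement of Lemma \ref{Induction_Up}, so the horizontal lift of $F_{j-1}$ on $H_{l,j}$ and on $H_{l-1,j}$ coincide when both are viewed inside $H_{jj}$, and vertical fields map to vertical fields. Once the induction concludes with $Q(0)$, the vector field $V:=V_0$ on $M_0=M$ is locally Lipschitz continuous, is by construction the pushout of $\tilde V$ along $\textsf{prin}_{\mathcal{I}_U}=\pi_1\circ\cdots\circ\pi_r$, and satisfies $V|_X=V_0|_{C_0}=F_0=F$, giving the lemma.
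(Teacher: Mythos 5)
Your proof is correct and follows essentially the same strategy as the paper's: a descending induction along the blowup tower \eqref{principalization}, applying Lemma \ref{Pushout} one blowup at a time, seeded by Lemma \ref{Induction_Up} at the top and using the identifications \eqref{isomorphism_irr_components} to match the horizontal/vertical structure at each stage. The only stylistic difference is in the shape of the induction hypothesis: the paper's $Q(l)$ simply records ``$V^l$ is the (iterated) pushout of $\tilde V$ by $\pi^r_l$ and equals $F_l$ on $C_l$,'' and at each step re-derives the needed decomposition on $H_{ll}$ from Lemma \ref{Induction_Up} via the identification $H_{rl}\cong H_{ll}$; you instead build the decomposition condition on every older component $H_{lj}$ directly into $Q(l)$, which makes the hypothesis of Lemma \ref{Pushout} immediately available but forces you to propagate the decomposition on the $H_{l-1,j}$, $j<l$, across the pushout (which you do, correctly, by arguing away from the center and then using continuity, together with item \ref{fully_contained_item} of Lemma \ref{limpiando_la_cancha} guaranteeing that $C_{l-1}$ either misses $H_{l-1,j}$ or is contained in it). Both formulations are sound and the argument is faithful to the paper's.
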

\begin{proof}
We will say that a vector field $V^l$ on $M_l$ satisfies the property $Q(l)$ if it is the pushout of $\tilde{V}$ by $\pi_l^{r}$ and coincides with $F_l$ on $C_l$. By Lemma \ref{Pushout}, a vector field satisfying the property $Q(l)$ is locally Lipschitz continuous. We will prove that there exists a vector field $V^{l}$ satisfying $Q(l)$ by descending induction on $l$.
\bigskip

\noindent\textbf{Basis case:} Put $V^{r}=\tilde{V}$. Since $C_r$ is empty, the statement $Q(r)$ follows immediately.
\bigskip

\noindent\textbf{Induction step:} There is a vector field $V^{l}$ on $M_l$ satisfying $Q(l)$. By Lemma \ref{Induction_Up} and identifying as usual the irreducible component $H_{rl}$ in $E$ with the respective one $H_{ll}$ in $E_l$, we have that on $H_{ll}$ the field $V^{l}$ is the sum of the horizontal lifting of $F_{l-1}$ on $C_{l-1}$ and a vertical field. By Lemma \ref{Pushout}, there is a vector field $V^{l-1}=(V^l)_*$ on $M_{l-1}$ satisfying $Q(l-1)$. This concludes the induction argument.
\bigskip

Finally, defining $V$ as the vector field $V^{0}$ on $M$, we have the result.
\end{proof}

\begin{proof}[Proof of Theorem \ref{Theorem_codim2}]
It remains to show that the vector field $V$ in the previous Lemma \ref{Induction_downstairs} is actually weakly logarithmic for $U$.

Denote by $\tilde{U}$ the pullback of the potential $U$ by the principalization, that is to say $\tilde{U}= U\circ \textit{\textsf{prin}}_{\mathcal{I}_U}$. By item \ref{zero_locus_U_tilde} in Lemma \ref{rmrk-1}, we have that $\tilde{E}=\mathcal{V}(\tilde{U})$ hence there is a smooth real valued function $\tilde{P}$ defined on $\tilde{M}-\tilde{E}$ such that
$$\tilde{V}(\tilde{U})= \tilde{P}\,\tilde{U}.$$

First, we will prove that the function $\tilde{P}$ smoothly extends to $\tilde{E}$. Consider a point $p$ in $\tilde{E}$. By item \ref{monomialization} in Lemma \ref{rmrk-1}, there is a coordinate neighbourhood $W_p$ of $p$ in $\tilde{M}$ with real analytic coordinates $w_1,\ldots, w_n$ such that
$$\tilde{U}|_{W_p}\,  =\, \pm w_1^{d_1}\ldots w_n^{d_n}$$
for some positive integer $d_1$ and some nonnegative integers $d_2,\ldots, d_n$.

Without loss of generality, we may suppose that the first $d_1,\ldots, d_l$ are nonzero while $d_{l+1}=\ldots= d_n=0$. Equivalently, the intersection of $W_p$ with the union of the irreducible components of $\mathcal{V}(\tilde{U})$ containing $p$ is $\mathcal{V}(w_1)\cup\ldots\cup \mathcal{V}(w_l)$.

Because $\tilde{V}$ is smooth and tangent to the irreducible components $\mathcal{V}(w_1),\ldots, \mathcal{V}(w_l)$ on $W_p$, this vector field has the expression
$$\tilde{V}|_{W_p}= h_1\,w_1\ \partial_{w_1}+\ldots + h_l\,w_l\ \partial_{w_l} + h_{l+1}\, \partial_{w_{l+1}} + \ldots + h_n\,\partial_{w_n}$$
for some real valued smooth functions $h_1,\ldots, h_n$. Therefore, $\tilde{P}$ smoothly extends to $W_p$ just defining it over this neighbourhood, with respect to the coordinates of $W_p$, with the expression
$$\tilde{P}|_{W_p}=  d_1\,h_1 + \ldots + d_l\,h_l.$$
Because the point $p$ was arbitrary and the extension of $\tilde{P}$ to $W_p$ is unique, we conclude that $\tilde{P}$ extends smoothly to $\tilde{E}$.

Now, we prove that $V$ is weakly logarithmic for $U$. Since $X=\mathcal{V}(U)$, there is a continuous real valued function $P$ defined on $M-X$ such that
$$V(U)=P\,U.$$
From item \ref{zero_locus_U_tilde} in Lemma \ref{rmrk-1} and
$$(P\circ \textit{\textsf{prin}}_{\mathcal{I}_U})\,\tilde{U}= (P\,U)\circ\textit{\textsf{prin}}_{\mathcal{I}_U}
= V(U)\circ \textit{\textsf{prin}}_{\mathcal{I}_U} = \tilde{V}(\tilde{U})= \tilde{P}\,\tilde{U},$$
it immediately follows that
\begin{equation}\label{P_y_P_tilde}
P\circ \textit{\textsf{prin}}_{\mathcal{I}_U} =\tilde{P}\qquad\mbox{on}\qquad \tilde{M}-\tilde{E}.
\end{equation}

We define $P$ on $X$ just as the zero constant. We claim that $P$ is bounded over compact sets. Indeed, consider a compact set $K\subset M$ and its preimage $\tilde{K}\subset \tilde{M}$ by the map $\textit{\textsf{prin}}_{\mathcal{I}_U}$ which is a compact set as well because the map is proper. Because of the expression \eqref{P_y_P_tilde} we have
$$P(K)=P(K\cap X)\cup P(K-X)\subset \{0\}\cup \tilde{P}(\tilde{K}-\tilde{E})\subset \{0\}\cup \tilde{P}(\tilde{K}),$$
that is to say we have the inclusion
\begin{equation}\label{P_y_P_tilde_2}
P(K)\subset \,\{0\}\cup \tilde{P}(\tilde{K}).
\end{equation}
The right hand side in relation \eqref{P_y_P_tilde_2} is bounded since $\tilde{P}$ is smooth and $\tilde{K}$ is compact hence $P(K)$ is bounded as well and because the compact set $K$ was arbitrary, this proves the claim and we have the result.
\end{proof}


\section{A Lyapunov instability criterion}\label{instability_criterion}

\subsection{Statement of the theorem}

Let $M$ be a smooth manifold and consider the smooth mechanical Lagrangian in $TM$
\begin{equation}\label{Lagrangian}
L(x,v)= Q_{x}(v)- U(x),\qquad (x,v)\in TM
\end{equation}
where $Q=(x\mapsto Q_{x})$ is a smooth section of positive definite quadratic forms and $U$ is a nonnegative smooth potential whose zero locus is non empty. By polarization, the section $Q$ defines a Riemannian metric $\rho$ on $M$ such that
\begin{equation}\label{Kinetic_term}
Q_x(v)=\Vert\,v\,\Vert_x^2/2,\qquad x\in M.
\end{equation}
Every zero potential point is a critical point of the potential hence the set
$$[U=0]\times\{ {\bf 0} \}\,\subset\,TM$$
consists entirely of equilibrium points of the Lagrangian dynamics of \eqref{Lagrangian}. This will be proved in Lemma \ref{zeroes_are_critical}.

In this section we prove the following instability criterion:

\begin{teo}\label{Instability}
Consider a nonnegative smooth potential $U$ on a smooth manifold $M$ and let $p$ be a zero potential point. If there is a locally Lipschitz continuous weakly logarithmic for $U$ vector field in some neighbourhood of $p$ in $M$ and non null at this point, then $(p,{\bf 0})$ is a Lyapunov unstable equilibrium point of the Lagrangian dynamics of \eqref{Lagrangian}.
\end{teo}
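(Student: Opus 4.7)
The natural Chetaev-type function for this problem is
$$W(x,v)\,=\,\langle V(x),\,v\rangle_\rho,$$
the momentum of $v$ in the direction of $V$. The plan is to show that, for initial conditions $(x_0,v_0)=(p,\,cV(p))$ with $c>0$ arbitrarily small, $W$ remains positive on a time interval long enough to force a $c$-independent displacement of the trajectory in the direction of $V(p)$. This will in turn force the orbit to exit a fixed neighbourhood of $p$, contradicting Lyapunov stability at $(p,{\bf 0})$.

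The key computation is the derivative of $W$ along a Lagrangian trajectory. Using the covariant form of the Euler-Lagrange equation $\nabla_{\dot x}\dot x=-\nabla U$ (with $\nabla$ the Levi-Civita connection of $\rho$ and $\nabla U$ the corresponding gradient),
$$\dot W\,=\,\langle\nabla_{\dot x}V,\,\dot x\rangle_\rho\,-\,V(U)\,=\,\langle\nabla_{\dot x}V,\,\dot x\rangle_\rho\,-\,P\,U.$$
Although $V$ is only locally Lipschitz, this identity holds a.e.\ along trajectories by Rademacher's theorem and the absolute continuity of $t\mapsto V(x(t))$; equivalently, $t\mapsto W(x(t),\dot x(t))$ is Lipschitz and equals $W(0)+\int_0^t\dot W$. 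Combined with energy conservation $\tfrac{1}{2}\|\dot x\|^2_\rho+U=E$, this yields the a priori bound $|\dot W|\leq K\,E$ on any fixed precompact neighbourhood of $p$, with $K$ depending only on the Lipschitz constant of $V$, local bounds on the metric, and the supremum of $|P|$.

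For the initial conditions $(p,cV(p))$ the hypothesis $U(p)=0$ gives $E=c^2\|V(p)\|^2/2$ and $W(0)=c\|V(p)\|^2$, so the bound $|\dot W|\leq K\,E$ implies $W(t)\geq W(0)/2$ throughout $[0,T]$ with $T=1/(Kc)$. On this interval $\|\dot x\|\leq c\|V(p)\|$, so the orbit stays within distance $\|V(p)\|/K$ of $p$; by the Lipschitz property of $V$ the deviation $|V(x(t))-V(p)|$ is of order $1/K$, and this transfers the lower bound on $W(t)$ to a lower bound of order $c$ on $\langle V(p),\dot x(t)\rangle$ --- provided $K$ is large enough, which we arrange by shrinking the working neighbourhood. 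Integrating over $[0,T]$ yields a displacement of the orbit in the direction of $V(p)$ whose magnitude is a positive constant depending only on $K$ and $\|V(p)\|$, \emph{independent of $c$}. The orbit therefore exits the base ball around $p$ of half this fixed radius, and since the initial phase-space distance $\|cV(p)\|$ can be made arbitrarily small, Lyapunov stability at $(p,{\bf 0})$ is violated. The subtle point that makes the argument run is a scale match: $W(0)$ is linear in $c$ while $|\dot W|$ is quadratic, giving a time window of length $\sim 1/c$ during which the orbit of speed $\sim c$ traverses a $c$-independent distance; the weakly logarithmic hypothesis $V(U)=P\,U$ is precisely what bounds $V(U)$ by the energy, turning an otherwise unbounded term into a benign one.
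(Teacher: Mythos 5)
Your argument is correct in substance and takes a genuinely different route from the paper's. The paper rescales time ($x_\varepsilon(\tau)=x^\varepsilon(\tau/\varepsilon)$), extracts by Arzel\`a--Ascoli a uniform limit curve $x$ inside $\mathcal{V}(U)$ and a $C^1$ limit $y$ of the ``momentum integrals'' $y_n(t)=\int_0^t\langle V(x_{\varepsilon_n}),\dot x_{\varepsilon_n}\rangle$, and then derives a contradiction from the assumption that $x$ is constant. You instead run a direct Chetaev-type argument on the unrescaled orbits using $W=\langle V(x),\dot x\rangle_\rho$, obtaining a quantitative lower bound $W(t)\geq W(0)/2$ on $[0,\,1/(Kc)]$ from the differential inequality $|\dot W|\leq KE$ and deducing a $c$-independent displacement. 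The two are closely related --- your $W$ along the rescaled trajectory is precisely the paper's $\dot y_n$, and the weakly logarithmic hypothesis enters in exactly the same way, bounding the otherwise singular term $\varepsilon^{-2}V(U)=\varepsilon^{-2}PU$ by the energy --- but your route avoids passing to a subsequential $C^1$ limit and the proof-by-contradiction via the limit curve, and it produces an explicit escape radius. What the paper's version buys in exchange is the limit curve $x\subset\mathcal{V}(U)$ itself, a geometric object the authors use elsewhere in their high-frequency-limit program.

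Two small points of care. First, since $V$ is only locally Lipschitz, the a.e.\ formula $\dot W=\langle\nabla_{\dot x}V,\dot x\rangle-V(U)$ should not be read as a pointwise chain rule for $\nabla V$; the clean way is to note that $W$ is Lipschitz (so absolutely continuous), split the difference quotient as
$$\frac{\langle V(x(\tau+h))-V(x(\tau)),\dot x(\tau+h)\rangle}{h}\ +\ \frac{\langle V(x(\tau)),\dot x(\tau+h)\rangle_{g(x(\tau+h))}-\langle V(x(\tau)),\dot x(\tau)\rangle_{g(x(\tau))}}{h},$$
bound the first quotient by the Lipschitz constant of $V$ times $\|\dot x\|^2\leq 2E$, and evaluate the second (which is a smooth limit since $V(x(\tau))$ is frozen) via the Euler--Lagrange equation, where $V(U)=PU\leq C_P\,E$ appears. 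This gives $|\dot W|\leq KE$ a.e.\ without invoking Rademacher along the curve. Second, your phrase ``which we arrange by shrinking the working neighbourhood'' is misleading: shrinking the neighbourhood only \emph{decreases} the natural constants $L,\,C_P,\,K$, not increases them. The correct move is simply to enlarge $K$ by fiat (the inequality $|\dot W|\leq KE$ is monotone in $K$), which simultaneously shortens $T=1/(Kc)$, keeps the orbit inside the fixed precompact set where the constants are valid (since it travels at most $\Vert V(p)\Vert/K$), and makes the error $L/K$ from replacing $V(x(t))$ by $V(p)$ as small as desired. With that rephrasing the argument is complete.
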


The existence of a vector field satisfying the hypothesis of the criterion does not depend on the kinetic term of the Lagrangian \eqref{Lagrangian}. In particular, this remark immediately implies the following corollary:

\begin{cor}
Under the hypothesis of Theorem \ref{Instability}, the potential does not exhibit the Bertotti-Bolotin phenomenon at the point because the equilibrium point is Lyapunov unstable in the Lagrangian dynamics of \eqref{Lagrangian} with an arbitrary kinetic term.
\end{cor}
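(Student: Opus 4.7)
I would use a Chetaev-style argument with $V$ as a witness: for arbitrarily small $\epsilon>0$, construct a low-energy orbit starting near $(p,\mathbf{0})$ whose velocity retains a definite component along $V$, forcing it out of a fixed neighborhood of $p$. The key auxiliary function is
\[ I(t)\,=\,\langle\dot x(t),\,V(x(t))\rangle_{x(t)}, \]
where $\langle\cdot,\cdot\rangle$ is the Riemannian metric induced by the kinetic term $Q$. Since solutions of the Euler-Lagrange equation are $C^2$ and $V$ is locally Lipschitz, $V\circ x$ is absolutely continuous, so $I$ is absolutely continuous in $t$; using metric compatibility of the Levi-Civita connection and the intrinsic form $\nabla_{\dot x}\dot x=-\nabla U$ of the motion equations one gets, for almost every $t$,
\[ \dot I\,=\,-V(U)+\langle\dot x,\nabla_{\dot x}V\rangle\,=\,-P(x)\,U(x)+\langle\dot x,\nabla_{\dot x}V\rangle, \]
where $P$ is the function bounded on compacts that witnesses the weakly logarithmic hypothesis.

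\textbf{Uniform bound on $\dot I$ and choice of initial data.} Fix a compact coordinate neighborhood $K$ of $p$ on which $\|V\|\geq c>0$ (using $V(p)\neq\mathbf{0}$ and continuity), $V$ is $L$-Lipschitz, $|P|\leq B$, and the Christoffel symbols of the kinetic metric are bounded. Energy conservation $\tfrac12\|\dot x\|^2+U=E$ together with $U\geq 0$ gives $\|\dot x\|^2\leq 2E$ and $U\leq E$, so the previous expression for $\dot I$ yields $|\dot I(t)|\leq C_1 E$ for a constant $C_1=C_1(K)$. Take initial data $x(0)=p$, $\dot x(0)=\epsilon\,V(p)/\|V(p)\|_p$; since $U(p)=0$, this gives $E=\epsilon^2/2$ and $I(0)=\epsilon\|V(p)\|_p>0$, and as long as the orbit remains in $K$,
\[ I(t)\,\geq\,\epsilon\|V(p)\|_p-\tfrac12 C_1\epsilon^2\,t\,\geq\,\tfrac12\epsilon\|V(p)\|_p\qquad\text{for }t\in[0,T], \]
with $T=\|V(p)\|_p/(C_1\epsilon)$.

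\textbf{Displacement and conclusion.} To convert the lower bound on $I$ into an actual escape, I would choose coordinates $(y^1,\ldots,y^n)$ around $p$ with $g_{ij}(p)=\delta_{ij}$ and $V(p)=\|V(p)\|_p\,\partial_{y^1}$. By smoothness of $g$ and Lipschitz continuity of $V$, on the ball of radius $r$ around $p$ one has $I=\|V(p)\|_p\,\dot y^1+O(r)\,\|\dot x\|$; since $\|\dot x\|\leq\epsilon$ and $I\geq\tfrac12\epsilon\|V(p)\|_p$, for $r$ below a fixed threshold this forces $\dot y^1\geq\epsilon/4$. Integrating,
\[ y^1(T)-y^1(0)\,\geq\,\epsilon T/4\,=\,\|V(p)\|_p/(4 C_1)\,=:\,\delta_0, \]
a positive constant independent of $\epsilon$. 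Shrinking $K$ to have diameter $<\delta_0$ in these coordinates, the orbit cannot remain inside $K$ throughout $[0,T]$; since the initial data are within $\epsilon$ of $(p,\mathbf{0})$ and $\epsilon$ is arbitrary, orbits arbitrarily close to the equilibrium exit a fixed neighborhood, which is Lyapunov instability.

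\textbf{Main obstacle.} The chief subtlety is the merely locally Lipschitz regularity of $V$: the term $\nabla_{\dot x}V$ exists only almost everywhere, so the whole evolution of $I$ has to be handled in the framework of absolutely continuous functions (Rademacher's theorem and the fundamental theorem of calculus for Lipschitz maps), and the bound $|\dot I|\leq C_1 E$ must respect this. The delicate numerical point is the scaling: $I(0)$ is of order $\sqrt{E}$ while $|\dot I|\lesssim E$, so $I$ stays positive only for a window $T\sim 1/\sqrt{E}$, during which the velocity of order $\sqrt{E}$ accumulates a scale-invariant displacement $\delta_0$. It is precisely this balance, made possible by the non-vanishing of $V$ at $p$ and the weakly logarithmic bound, that produces a uniform escape distance independent of the size of the initial perturbation.
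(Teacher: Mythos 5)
The corollary you are asked to prove is, in the paper, an essentially one-line observation: the hypothesis of Theorem \ref{Instability} --- existence of a locally Lipschitz, weakly logarithmic vector field non-null at $p$ --- makes no reference whatsoever to the kinetic term $Q$. Therefore Theorem \ref{Instability} applies for every choice of $Q$, which is precisely the statement that $p$ does not exhibit the Bertotti--Bolotin phenomenon. The paper states this in the sentence immediately preceding the corollary and offers no further proof. Your proposal misses how short this deduction is and instead re-derives the instability theorem itself, from scratch, keeping track of how the constants depend on the metric.

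That said, the argument you give is interesting and, modulo one technical imprecision, correct, and it is genuinely different from the paper's proof of Theorem \ref{Instability}. The paper rescales time to produce the singular Lagrangians $L_\varepsilon$, extracts an Arzel\`a--Ascoli limit curve lying in $\mathcal V(U)$, and derives a contradiction from the $C^1$-limit of the holonomic functions $y_n$. You keep $\epsilon$ fixed and run a direct, quantitative Chetaev-type estimate on $I(t)=\langle \dot x,V\rangle$. The key inequality $|\dot I|\le C_1 E$ plays exactly the role of the Lipschitz bound on $\dot y_n$ in the paper's Lemma \ref{Lema3}, and the scaling balance you highlight ($I(0)\sim\sqrt E$, $|\dot I|\lesssim E$, window $T\sim 1/\sqrt E$, displacement $\sim 1$) is the correct mechanism. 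Your version avoids both the subsequence extraction and the contradiction argument with the limit curve, so it is arguably a cleaner route to Theorem \ref{Instability} and hence to the corollary.

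One point needs tightening. You justify the a.e.\ evolution of $I$ by invoking Rademacher's theorem for $\nabla_{\dot x}V$. Rademacher gives differentiability of $V$ at Lebesgue-a.e.\ point of $M$, but the image of the curve $t\mapsto x(t)$ is a set of measure zero, so this gives no control over whether $V$ is differentiable at $x(t)$ for a.e.\ $t$. The bound $|\dot I|\le C_1 E$ must instead be obtained directly from difference quotients: $I$ is Lipschitz in $t$ (being a locally Lipschitz function of the $C^1$ data $(x(t),\dot x(t))$), so $\dot I$ exists a.e.\ $t$, and in the decomposition
\[
\frac{I(t+h)-I(t)}{h}=\frac{\bigl[(gV)_j(x(t+h))-(gV)_j(x(t))\bigr]\dot x^j(t+h)}{h}+\ (gV)_j(x(t))\,\frac{\dot x^j(t+h)-\dot x^j(t)}{h}
\]
the first quotient is controlled by the Lipschitz constant of $gV$ and $\|\dot x\|^2\le 2E$, while the second converges to $g_{ij}V^i\ddot x^j=-g_{ij}V^i\Gamma^j_{kl}\dot x^k\dot x^l-PU$ by the equations of motion and is bounded by a multiple of $E$ via the weakly logarithmic hypothesis. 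This yields $|\dot I|\le C_1 E$ a.e.\ without ever invoking a pointwise $\nabla V$ along the curve. Alternatively, mollify $V$ as the paper does in Lemma \ref{Lema3}. With that fix, your estimate stands, and the rest of the argument (the coordinates centered at $p$, the lower bound $\dot y^1\ge\epsilon/4$, the scale-invariant displacement $\delta_0$) goes through. But for grading purposes, note again that none of this is necessary: the corollary follows in one sentence from the already-proved Theorem \ref{Instability}.
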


An equivalent statement of the previous corollary is that, under the hypothesis of Theorem \ref{Instability}, the instability of the equilibrium does not depend on the Riemannian metric on $M$ where the kinetic term is given by \eqref{Kinetic_term}.

Theorem \ref{Instability} clearly generalizes Theorem 1.1 in our earlier paper \cite{BP} where the vector field was assumed to the gradient of some regular function. In particular, following the first example in subsection \ref{weakly_log_subsection} regarding quasi homogeneous smooth functions, Theorem \ref{Instability} also generalizes Corollary 2 in \cite{BP} now in the context of Lagrangian dynamics:

\begin{cor}
Consider a nonnegative quasi homogeneous smooth potential $U$ on $M=\R^n$ such that
$$U(t^{\lambda_1}\,x_1,\ldots,\,t^{\lambda_n}\,x_n)\,=\,t^\lambda\,U(x_1,\ldots,\,x_n),\qquad t>0.$$
Then, every zero potential point $(x_1,\ldots,\,x_n)$ such that $\lambda_i\,x_i\neq 0$ for some index $i$ is a Lyapunov unstable equilibrium point of the Lagrangian dynamics of \eqref{Lagrangian}.
\end{cor}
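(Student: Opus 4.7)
The plan is to verify directly the hypothesis of Theorem \ref{Instability} by exhibiting the Euler vector field as a logarithmic field for $U$. Define
\begin{equation*}
V \,=\, \sum_{i=1}^{n}\,\lambda_i\,x_i\,\partial_i
\end{equation*}
on $M=\R^n$. This field is smooth with real analytic (in fact linear) coefficients, hence locally Lipschitz continuous on all of $\R^n$. Moreover, as already recalled in subsection \ref{weakly_log_subsection}, Euler's Theorem for quasi homogeneous smooth functions yields the identity $V(U)=\lambda\,U$, so $V$ is (smoothly) logarithmic for $U$ with constant coefficient $P\equiv \lambda$; in particular $V$ is weakly logarithmic for $U$ in any open subset of $\R^n$.

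Next I would check the non vanishing condition at the given point. At $(x_1,\ldots,x_n)$ the vector $V_{(x_1,\ldots,x_n)}=\sum_i \lambda_i x_i \,\partial_i$ is non null precisely when there is some index $i$ with $\lambda_i x_i\neq 0$, which is exactly the hypothesis of the corollary. Thus $V$ is a locally Lipschitz continuous weakly logarithmic for $U$ vector field defined on a neighbourhood of $(x_1,\ldots,x_n)$ and non null at this point.

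Finally I would invoke Theorem \ref{Instability}: since $U$ is a nonnegative smooth potential on $M=\R^n$, and $(x_1,\ldots,x_n)$ is a zero of $U$ at which there exists such a locally Lipschitz continuous weakly logarithmic field non null at the point, the equilibrium $((x_1,\ldots,x_n),\mathbf{0})$ is Lyapunov unstable for the Lagrangian dynamics of \eqref{Lagrangian}. There is no essential obstacle here; the only thing worth double checking is that the hypotheses of Theorem \ref{Instability} do not require the potential to vanish only at the specific point (isolation is not assumed) and do not require the logarithmic coefficient $P$ to enjoy any particular regularity beyond boundedness on compact sets, both of which are trivially satisfied since $P$ is the constant $\lambda$.
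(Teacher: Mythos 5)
Your proof is correct and is exactly the argument the paper intends: the corollary is presented as an immediate consequence of Theorem \ref{Instability} applied to the Euler vector field $V=\sum_i \lambda_i x_i\,\partial_i$, which is smooth (hence locally Lipschitz), satisfies $V(U)=\lambda\,U$ by Euler's Theorem, and is non null at the point precisely under the hypothesis $\lambda_i x_i\neq 0$ for some $i$. Your closing checks on the hypotheses of Theorem \ref{Instability} are accurate as well.
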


The criterion for instability is sufficient but not necessary as the following example shows. Consider the potential
$$U(x,y)=x^2\,y^2.$$
The axes are invariant subspaces of the corresponding Newtonian dynamics hence the origin is Lyapunov unstable since a particle can escape to infinity with arbitrary small velocity along any of the axes. However, there is no continuous weakly logarithmic vector field non null at the origin.

\subsection{Preliminaries on approximations of unity}

Consider an open set $\Omega\subset\R^n$ and a real valued locally integrable function $f:\Omega\rightarrow\R$.

\begin{defi}
A $C^1$ approximation of unity is a sequence $(u_\eta)_{\eta>0}$ such that for every $\eta>0$:
\begin{enumerate}
\item $u_\eta$ is a nonnegative $C^1$ real valued function on $\R^n$.
\item $supp(u_\eta)\subset \overline{B({\bf 0}, \eta)}\subset\R^n$.
\item $\int_{\R^n}dz\, u_\eta(z)=1$ where $dz$ denotes the Lebesgue measure.
\end{enumerate}
\end{defi}

Denote by $f_\eta$ the convolution
$$f_\eta=u_\eta* f:\Omega_\eta\rightarrow\R,\quad f_\eta(x)=\int_\Omega\,dy\,u_\eta(x-y)\,f(y)$$
where we have defined
$$\Omega_\eta=\{\,x\in \Omega\ |\ x-supp(u_\eta)\subset \Omega\,\}\subset\Omega.$$
For every $\eta>0$, the function $f_\eta$ is $C^1$ on $\Omega_\eta$ with
$$\partial_a f_\eta= (\partial_a u_\eta)* f.$$

\begin{lema}
If $f$ is Lipschitz with Lipschitz constant equals to $k$, then for every $\eta>0$ and every $a=1,\ldots, n$ we have on $\Omega_\eta$
$$\vert\,\partial_a f_\eta\,\vert\leq k.$$
\end{lema}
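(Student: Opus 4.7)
The plan is to bypass the explicit formula $\partial_a f_\eta = (\partial_a u_\eta)\ast f$ (which would force one to control the large quantity $\partial_a u_\eta$ directly), and instead deduce the derivative bound from a Lipschitz estimate on $f_\eta$ itself. This is cleaner because Lipschitz continuity transfers transparently through convolution with a probability density.

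First I would rewrite the convolution via the change of variables $z = x-y$, obtaining
\begin{equation*}
f_\eta(x)\,=\,\int_{\R^n}\,u_\eta(z)\,f(x-z)\,dz,\qquad x\in\Omega_\eta.
\end{equation*}
Fix $x\in\Omega_\eta$ and $a\in\{1,\ldots,n\}$; since $\Omega_\eta$ is open, for all sufficiently small $h\in\R$ the point $x+h\,e_a$ also lies in $\Omega_\eta$ and the integrand is well defined for every $z\in\mathrm{supp}(u_\eta)$. Using the Lipschitz property of $f$ with constant $k$, I then estimate
\begin{equation*}
|f_\eta(x+h\,e_a)-f_\eta(x)|\,\leq\,\int_{\R^n}\,u_\eta(z)\,\bigl|f(x+h\,e_a-z)-f(x-z)\bigr|\,dz\,\leq\,k\,|h|\,\int_{\R^n}\,u_\eta(z)\,dz\,=\,k\,|h|,
\end{equation*}
where the last equality uses the normalization $\int u_\eta=1$.

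Finally, since $f_\eta$ is of class $C^1$ on $\Omega_\eta$, dividing the previous inequality by $|h|$ and taking $h\to 0$ yields $|\partial_a f_\eta(x)|\leq k$, and as $x\in\Omega_\eta$ was arbitrary the conclusion follows. There is no real obstacle here; the only mild care needed is to confirm that $x+h\,e_a\in\Omega_\eta$ for small $h$, which is immediate from the openness of $\Omega_\eta$. Note that this approach avoids invoking Rademacher's theorem or weak differentiability; the argument works directly from the Lipschitz definition and the probability-measure property of $u_\eta$.
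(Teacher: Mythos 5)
Your proof is correct and takes essentially the same approach as the paper: both establish the Lipschitz estimate $|f_\eta(x')-f_\eta(x)|\leq k\,|x'-x|$ on $\Omega_\eta$ by pushing the Lipschitz bound for $f$ through the convolution and using $\int u_\eta = 1$, then specialize to $x'=x+h\,e_a$ and let $h\to 0$.
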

\begin{proof}
This follows immediately from
$$\vert f_\eta(x')-f_\eta(x)\vert=\left\vert\int_{supp(u_\eta)}dz\,u_\eta(z)\,(f(x'-z)-f(x-z))\right\vert$$
$$\leq\int_{supp(u_\eta)}dz\,u_\eta(z)\,\vert f(x'-z)-f(x-z)\vert$$
$$\leq \int_{supp(u_\eta)}dz\,u_\eta(z)\,k\,\vert x'-x\vert=k\,\vert x'-x\vert,\quad x,x'\in \Omega_\eta.$$
In effect, taking $x$ in $\Omega_\eta$ and $x'=x +\varepsilon\,e_a$ with $\varepsilon>0$ small enough such that $x'$ belongs to $\Omega_\eta$ as well where $e_a$ is the canonical $a$-th vector we have
$$\vert\,\partial_a f_\eta(x)\,\vert= \lim_{\varepsilon\to 0^+}\,\varepsilon^{-1}\vert f_\eta(x +\varepsilon\,e_a)-f_\eta(x)\vert\leq k$$
and this concludes the proof.
\end{proof}

\subsection{The limit curve}

This subsection follows closely the construction of the limit curve in the previous work of the authors in \cite{BMP}, \cite{BP} and \cite{Bu2}. We include it in here to achieve a self contained exposition.

Denote by $X$ the real analytic variety $\mathcal{V}(U)$. Let $p$ be a point in $X$, $V$ be the weakly logarithmic field in the hypothesis and for every $\varepsilon>0$ consider the solution $x^{\varepsilon}$ of the Euler-Lagrange equations of the Lagrangian \eqref{Lagrangian} with initial conditions $x^{\varepsilon}(0)=p$ and $\dot{x}^{\varepsilon}(0)=\varepsilon\, V(p)$.

For every $\varepsilon>0$, define $x_\varepsilon$ such that $x_\varepsilon(\tau)= x^{\varepsilon}(\tau/\varepsilon)$ where $x^{\varepsilon}$ is defined. These are solutions of the Euler-Lagrange equations of the Lagrangian:
\begin{equation}\label{Lagrangian2}
L_\varepsilon(x,v)= Q_{x}(v)-\varepsilon^{-2} U(x),\qquad (x,v)\in TM.
\end{equation}
Now, the initial conditions $x_{\varepsilon}(0)=p$ and $\dot{x}_{\varepsilon}(0)= V(p)$ are fixed but the motion equations become singular as $\varepsilon\to 0^{+}$. Denote by $I_\varepsilon$ the maximal interval containing zero where $x_{\varepsilon}$ is defined.

\begin{lema}\label{Lema1}
For every $\varepsilon>0$, $\Vert\dot{x}_\varepsilon(\tau)\Vert\leq \Vert V(p) \Vert$ for every $\tau$ in $I_\varepsilon$ and
$${\rm Im}(x_\varepsilon)\subset [U\leq \varepsilon^{2}\,\Vert V(p) \Vert^{2}/2].$$
\end{lema}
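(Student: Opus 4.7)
The plan is to prove this via conservation of the total energy (Hamiltonian) of the Lagrangian \eqref{Lagrangian2}. Since the kinetic term $Q_x(v)=\|v\|_x^2/2$ is time-independent and the potential $\varepsilon^{-2}U$ is likewise time-independent, the standard Euler-Lagrange machinery yields that
\[
E_\varepsilon(\tau)\,=\,\tfrac{1}{2}\,\|\dot{x}_\varepsilon(\tau)\|_{x_\varepsilon(\tau)}^2\,+\,\varepsilon^{-2}\,U(x_\varepsilon(\tau))
\]
is constant along any solution $x_\varepsilon$ of the equations of motion of $L_\varepsilon$.

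Next I would evaluate $E_\varepsilon$ at $\tau=0$. By hypothesis $p\in X=\mathcal{V}(U)$, so $U(p)=0$, and the chosen initial conditions $x_\varepsilon(0)=p$, $\dot{x}_\varepsilon(0)=V(p)$ give
\[
E_\varepsilon(0)\,=\,\tfrac{1}{2}\,\|V(p)\|_p^2\,=\,\tfrac{1}{2}\,\|V(p)\|^2.
\]
Combining conservation with the hypothesis that $U\geq 0$ on $M$, each of the two nonnegative summands in $E_\varepsilon(\tau)$ is bounded by $\tfrac{1}{2}\|V(p)\|^2$. The kinetic bound immediately reads
\[
\tfrac{1}{2}\,\|\dot{x}_\varepsilon(\tau)\|^2\,\leq\,\tfrac{1}{2}\,\|V(p)\|^2,
\]
which yields the first assertion $\|\dot{x}_\varepsilon(\tau)\|\leq\|V(p)\|$ for all $\tau\in I_\varepsilon$. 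The potential bound reads $\varepsilon^{-2}U(x_\varepsilon(\tau))\leq \tfrac{1}{2}\|V(p)\|^2$, i.e.\ $U(x_\varepsilon(\tau))\leq \varepsilon^2\|V(p)\|^2/2$, which is exactly the inclusion $\mathrm{Im}(x_\varepsilon)\subset [U\leq \varepsilon^2\|V(p)\|^2/2]$.

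There is essentially no obstacle here: the only point worth stating carefully is the conservation of $E_\varepsilon$, which follows either by the standard Legendre transform computation (the Hamiltonian associated with a natural Lagrangian of the form kinetic minus potential is the total energy, and it is conserved whenever $L_\varepsilon$ has no explicit time dependence), or directly by differentiating $E_\varepsilon(\tau)$ and substituting the Euler-Lagrange equations. This short energy argument gives both conclusions simultaneously.
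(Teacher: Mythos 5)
Your proof is correct and takes essentially the same route as the paper: conservation of the Hamiltonian $H_\varepsilon(x,v)=\Vert v\Vert^2/2+\varepsilon^{-2}U(x)$, evaluation at $\tau=0$ using $U(p)=0$, and nonnegativity of $U$ to bound each summand separately.
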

\dem
For every $\varepsilon>0$, the Hamiltonian
$$H_\varepsilon(x,v)= \Vert v \Vert^{2}/2+\varepsilon^{-2}U(x)$$
is constant along the solution $x_\varepsilon$ hence
$$\Vert \dot{x}_\varepsilon(\tau) \Vert^{2}/2,\ \varepsilon^{-2}U(x_\varepsilon(\tau))\leq H_\varepsilon(x_\varepsilon(\tau), \dot{x}_\varepsilon(\tau))= H_\varepsilon(p, V(p))= \Vert V(p) \Vert^{2}/2$$
and the result follows.
\fdem

\begin{cor}\label{Cor1}
Let $T>0$. For every $\varepsilon>0$ and every $\tau$ in $I_\varepsilon\cap [-T, T]$, $(x_{\varepsilon}(\tau), \dot{x}_\varepsilon(\tau))\in R_T$ where
$$R_T= \left\lbrace\,(x,v)\in TM\ |\ x\in \overline{B_\rho(p, T\, \Vert V(p) \Vert)},\ v\in  \overline{B_{x}({\bf 0}, \Vert V(p) \Vert)}\,\right\rbrace.$$
Note that this region is a compact set not depending on $\varepsilon$.
\end{cor}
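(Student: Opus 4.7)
The plan is to deduce the corollary immediately from Lemma \ref{Lema1} by splitting the membership $(x_\varepsilon(\tau),\dot{x}_\varepsilon(\tau))\in R_T$ into two assertions and proving each one separately.

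First, the velocity bound is the content of Lemma \ref{Lema1}: for every $\varepsilon>0$ and every $\tau\in I_\varepsilon$ we have $\Vert \dot{x}_\varepsilon(\tau)\Vert\leq \Vert V(p)\Vert$, so in particular $\dot{x}_\varepsilon(\tau)\in \overline{B_{x_\varepsilon(\tau)}({\bf 0},\Vert V(p)\Vert)}$.

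Second, for the position I would integrate the velocity bound against the Riemannian structure. The length of the curve $x_\varepsilon|_{[0,\tau]}$ (or $[\tau,0]$ when $\tau<0$) is
\[
\mathrm{Length}(x_\varepsilon|_{[0,\tau]})\,=\,\int_0^{|\tau|}\Vert \dot{x}_\varepsilon(s)\Vert_{x_\varepsilon(s)}\,ds\,\leq\, |\tau|\,\Vert V(p)\Vert\,\leq\, T\,\Vert V(p)\Vert,
\]
where the first inequality uses Lemma \ref{Lema1} pointwise and the second uses $|\tau|\leq T$. Since $\rho$-distance is the infimum of lengths over all admissible paths joining two points, the path $x_\varepsilon|_{[0,\tau]}$ realizes an upper bound for $\rho(p,x_\varepsilon(\tau))$, giving $\rho(p,x_\varepsilon(\tau))\leq T\,\Vert V(p)\Vert$, hence $x_\varepsilon(\tau)\in\overline{B_\rho(p,T\,\Vert V(p)\Vert)}$.

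Combining the two bounds yields $(x_\varepsilon(\tau),\dot{x}_\varepsilon(\tau))\in R_T$. There is no real obstacle here; the proof is essentially a one-line integration of the energy estimate of Lemma \ref{Lema1}, and the only subtlety to note is that the bound on $\dot{x}_\varepsilon$ does not depend on $\varepsilon$, which is precisely what makes $R_T$ independent of $\varepsilon$ and sets up the Arzel\`a--Ascoli compactness argument presumably used in the subsequent extraction of a limit curve.
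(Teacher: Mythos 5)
Your proof is correct and is essentially the same as the paper's: the velocity bound is quoted from Lemma \ref{Lema1} and then integrated along the curve to bound $d(x_\varepsilon(\tau),p)$ by $|\tau|\,\Vert V(p)\Vert \le T\,\Vert V(p)\Vert$. The paper writes the length estimate in the equivalent form $d(x_\varepsilon(\tau),p)\le\left|\int_0^\tau\Vert\dot{x}_\varepsilon(s)\Vert\,ds\right|$, but the content is identical.
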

\dem
By Lemma \ref{Lema1}, $\Vert \dot{x}_\varepsilon(\tau)\Vert\leq \Vert V(p) \Vert$ and
$$d( x_\varepsilon(\tau),\,p) \leq \left\vert\int_0^{\tau}ds\ \Vert\dot{x}_\varepsilon(s)\Vert\ \right\vert\leq |\tau|\ \Vert V(p) \Vert\leq T \Vert V(p) \Vert,$$
the result follows.
\fdem

\begin{cor}\label{Cor2}
For every $\varepsilon>0$, $x_\varepsilon$ is defined over the whole real line.
\end{cor}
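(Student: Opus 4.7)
The plan is to combine the a priori confinement provided by Corollary \ref{Cor1} with the standard escape principle for maximal solutions of smooth ODEs on a manifold. Since $U$ is smooth and $Q$ is a smooth section of positive definite quadratic forms, the Euler-Lagrange equations for the Lagrangian $L_\varepsilon$ in \eqref{Lagrangian2} define a smooth second order vector field on $TM$; hence the maximal lift $(x_\varepsilon, \dot{x}_\varepsilon)$ in $TM$ either exists for all time, or it leaves every compact subset of $TM$ as $\tau$ approaches the finite endpoint of $I_\varepsilon$.

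Assume for contradiction that $\tau^{\ast}:=\sup I_\varepsilon<\infty$ (the symmetric case $\inf I_\varepsilon>-\infty$ is identical). Pick any $T>\tau^{\ast}$. Then $[0,\tau^{\ast})\subset I_\varepsilon\cap[-T,T]$, so by Corollary \ref{Cor1} the trajectory $(x_\varepsilon(\tau),\,\dot{x}_\varepsilon(\tau))$ stays in the compact set $R_T\subset TM$ for every $\tau$ in $[0,\tau^{\ast})$. This contradicts the escape property, forcing $\sup I_\varepsilon=+\infty$. The same argument applied at the lower endpoint gives $\inf I_\varepsilon=-\infty$, whence $I_\varepsilon=\R$.

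There is essentially no obstacle here: the real content was already extracted in Lemma \ref{Lema1} via conservation of the Hamiltonian $H_\varepsilon$, which bounds the speed $\Vert\dot{x}_\varepsilon\Vert$ by $\Vert V(p)\Vert$ uniformly in $\tau$, and Corollary \ref{Cor1} packaged this bound into the compact confinement region $R_T$. The only subtlety worth flagging is the implicit use of compactness of the closed metric ball $\overline{B_\rho(p,T\,\Vert V(p)\Vert)}$ in $M$ entering the definition of $R_T$; this is precisely the point asserted in the last sentence of Corollary \ref{Cor1} and is what makes the escape argument go through.
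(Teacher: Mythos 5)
Your proof is correct and matches the paper's argument essentially verbatim: both invoke the escape principle for maximal solutions and contradict it using the compact confinement region $R_T$ from Corollary \ref{Cor1}, with the minor cosmetic difference that the paper takes $T=\omega_+$ directly while you take any $T>\tau^{*}$. Your closing remark correctly identifies the one non-trivial ingredient, the compactness of the closed $\rho$-ball entering $R_T$, which the paper likewise just asserts in Corollary \ref{Cor1}.
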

\dem
Consider the maximal interval $I_\varepsilon=(\omega_-, \omega_+)$ and suppose that $\omega_+$ is finite. Then, $(x_\varepsilon, \dot{x}_\varepsilon)|_{[0,\omega_+)}$ is contained in the compact set $R_{\omega_+}$ which is absurd hence $\omega_+=+\infty$. Analogously, $\omega_-=-\infty$.
\fdem
\bigskip

\textbf{Denote by $E$ the neighbourhood of $p$ in $M$ where the vector field $V$ is defined. Let $T>0$ be small enough such that $\overline{B_\rho(p, T\, \Vert V(p) \Vert)}$ is contained in $E$.}
\bigskip

In particular, for every $\varepsilon>0$ the image of $x_\varepsilon$ restricted to $[-T,T]$ is contained in $E$.

\begin{cor}\label{Cor3}
There is a continuous curve $x:[-T,T]\rightarrow X$ with $x(0)=p$ and a sequence $(\varepsilon_n)$ such that $\varepsilon_n>0$, $\varepsilon_n\to 0^{+}$ and $x_{\varepsilon_n}\rightarrow x$ uniformly on $[-T,T]$.
\end{cor}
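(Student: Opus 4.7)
The plan is to apply the Arzelà\,-\,Ascoli theorem to the family $\{x_\varepsilon\}_{\varepsilon>0}$ restricted to the compact interval $[-T,T]$. By Corollary \ref{Cor1}, every image $x_\varepsilon([-T,T])$ lies inside the fixed compact ball $\overline{B_\rho(p,T\,\Vert V(p)\Vert)}\subset E$, so the family is uniformly bounded in a fixed compact subset of $M$. By Lemma \ref{Lema1}, $\Vert \dot{x}_\varepsilon(\tau)\Vert\leq \Vert V(p)\Vert$ for every $\tau$, whence
$$d_\rho\!\left(x_\varepsilon(\tau),\,x_\varepsilon(\tau')\right)\,\leq\,\Vert V(p)\Vert\,|\tau-\tau'|,\qquad \tau,\tau'\in[-T,T],$$
so the family is equi\,-\,Lipschitz and in particular equicontinuous.

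Next, I would apply Arzelà\,-\,Ascoli to extract a sequence $\varepsilon_n\to 0^{+}$ and a continuous curve $x:[-T,T]\to M$ with $x_{\varepsilon_n}\to x$ uniformly on $[-T,T]$. Since $x_{\varepsilon_n}(0)=p$ for every $n$, passing to the limit gives $x(0)=p$.

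Finally, I would show that $x$ takes values in $X=\mathcal{V}(U)$. By Lemma \ref{Lema1} we have $U(x_{\varepsilon_n}(\tau))\leq \varepsilon_n^{2}\,\Vert V(p)\Vert^{2}/2$ for every $\tau\in[-T,T]$. Letting $n\to\infty$ and using the continuity of $U$ together with the uniform convergence $x_{\varepsilon_n}\to x$ yields $U(x(\tau))=0$ for every $\tau\in[-T,T]$, i.e.\ $x(\tau)\in X$. No step of this argument is delicate; Arzelà\,-\,Ascoli on the ambient manifold works because the common compact ball $\overline{B_\rho(p,T\,\Vert V(p)\Vert)}$ is metrizable and the equi\,-\,Lipschitz bound is with respect to the Riemannian distance $\rho$.
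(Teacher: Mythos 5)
Your proof is correct and follows essentially the same route as the paper: apply Arzelà\,-\,Ascoli using the equi\,-\,Lipschitz bound from Lemma \ref{Lema1} and the uniform compactness from Corollary \ref{Cor1}, then pass the estimate $0\leq U(x_{\varepsilon_n}(\tau))\leq\varepsilon_n^{2}\,\Vert V(p)\Vert^{2}/2$ to the limit to place the image of $x$ inside $\mathcal{V}(U)=X$. The only cosmetic difference is that you make the equi\,-\,Lipschitz inequality explicit, whereas the paper simply invokes equicontinuity from Lemma \ref{Lema1}.
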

\dem
Consider the family $\mathcal F$ of functions $x_{\varepsilon}$ defined in $[-T, T]$ that are solutions of the Euler-Lagrange equations of  \eqref{Lagrangian2} such that 
$x_{\varepsilon}(0)=p$ and $\dot x_{\varepsilon}(0)= V(p)$.  By Lemma \ref{Lema1} the family $\mathcal F$ is equicontinuous.  
Consider a sequence    $(x_{\varepsilon_n})$ such that $\varepsilon_n\to 0^{+}$. By Arzel\`a--Ascoli Theorem there is a subsequence, that we still call by   $(x_{\varepsilon_n})$, 
converging uniformly to a continuous curve $x$. Because $x_\varepsilon(0)=p$ for every $\varepsilon>0$ we conclude that $x(0)=p$. 
Let  $t\in [-T, T]$. By Lemma \ref{Lema1} 

$$0\leq U(x_{\varepsilon_n}(t))\leq \varepsilon_n^{2}\,\Vert V(p) \Vert^{2}/2.$$

Therefore $U(x(t))=0$ for every $t\in [-T, T]$, i.e.,  

$${\rm Im}(x)\subset   \mathcal{V}(U)=X. $$
and this finishes the proof.
\fdem

The limit curve is the uniform limit of a sequence of curves with bounded velocity and unbounded acceleration, that is to say a \textit{high frequency limit} and it will not be differentiable in general. However, there are situations where the limit curve is differentiable. See the work by the first author in \cite{Bu2} as an example of this phenomenon.

\subsection{The holonomic curve}

For every natural $n$, define the function $y_n: [-T, T]\rightarrow \R$ such that
$$y_n(t)= \int_0^{t}\,d\tau\,\left\langle\, V\left( x_{\varepsilon_n}(\tau)\right),\,\dot{x}_{\varepsilon_n}(\tau)\,\right\rangle_{x_{\varepsilon_n}(\tau)}.$$

\begin{lema}\label{Lema2}
Every $y_n$ is Lipschitz with Lipschitz constant independent of $n$. In particular, the sequence $(y_n)_{n\in\N}$ is equicontinuous and uniformly bounded.
\end{lema}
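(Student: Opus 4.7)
The plan is to bound the integrand of $y_n$ uniformly in $n$ and then deduce a common Lipschitz constant from the fundamental theorem of calculus.

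First, I would apply Cauchy\,-\,Schwarz pointwise:
\[
\left|\,\left\langle\, V(x_{\varepsilon_n}(\tau)),\,\dot{x}_{\varepsilon_n}(\tau)\,\right\rangle_{x_{\varepsilon_n}(\tau)}\,\right|\,\leq\,\Vert V(x_{\varepsilon_n}(\tau))\Vert_{x_{\varepsilon_n}(\tau)}\cdot\Vert \dot{x}_{\varepsilon_n}(\tau)\Vert_{x_{\varepsilon_n}(\tau)}.
\]
The second factor is bounded by $\Vert V(p)\Vert$ for every $\tau$ and every $n$ thanks to Lemma \ref{Lema1}, so the only nontrivial task is to bound the first factor uniformly in $n$ and $\tau\in[-T,T]$.

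For this, I would invoke Corollary \ref{Cor1}, which says that the image of $x_{\varepsilon_n}$ on $[-T,T]$ lies in the compact ball $\overline{B_\rho(p, T\,\Vert V(p)\Vert)}$; by the choice of $T$ made just before Corollary \ref{Cor3}, this ball is contained in the neighbourhood $E$ whereon $V$ is defined. Since $V$ is locally Lipschitz continuous, it is in particular continuous on $E$, hence bounded on the compact set $\overline{B_\rho(p, T\,\Vert V(p)\Vert)}$. Call
\[
M\,=\,\max\left\{\,\Vert V(q)\Vert_q\ :\ q\in\overline{B_\rho(p, T\,\Vert V(p)\Vert)}\,\right\}.
\]
Combining both bounds, the integrand in the definition of $y_n$ is bounded in absolute value by $M\,\Vert V(p)\Vert$ uniformly in $n$ and $\tau$.

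From here the conclusion is immediate: for every $s,t\in[-T,T]$,
\[
|y_n(t)-y_n(s)|\,\leq\,\int_s^t d\tau\ M\,\Vert V(p)\Vert\,=\,M\,\Vert V(p)\Vert\,|t-s|,
\]
so $y_n$ is Lipschitz with constant $K=M\,\Vert V(p)\Vert$ independent of $n$, whence the sequence $(y_n)_{n\in\N}$ is equicontinuous. Uniform boundedness follows from $y_n(0)=0$ and the Lipschitz bound, which give $|y_n(t)|\leq K\,T$ on $[-T,T]$. No step is really an obstacle here; the one place to be careful is using the locally Lipschitz hypothesis of Theorem \ref{Instability} (rather than mere continuity) only insofar as it forces $V$ to be continuous and hence bounded on the compact trapping region supplied by Corollary \ref{Cor1}.
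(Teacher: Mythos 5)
Your proof is correct and follows essentially the same route as the paper: Cauchy--Schwarz, the velocity bound from Lemma \ref{Lema1}, and boundedness of $V$ on the compact trapping ball from Corollary \ref{Cor1}. The only cosmetic difference is that the paper bounds both factors by the same constant $M$ (giving $M^2$) rather than separating out $\Vert V(p)\Vert$; the substance is identical.
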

\begin{proof}
It follows immediately from the Cauchy-Schwarz inequality, Lemma \ref{Lema1} and the continuity of $V$ for
$$|\dot{y}_n(t)|= |\left\langle\, V\left( x_{\varepsilon_n}(t)\right),\,\dot{x}_{\varepsilon_n}(t)\,\right\rangle|
\leq ||V\left( x_{\varepsilon_n}(t)\right)||\,||\dot{x}_{\varepsilon_n}(t)||
\leq M^{2}$$
where $M$ is the maximum of the norm of $V$ over the compact set $\overline{B_\rho(p, T\, \Vert V(p) \Vert)}$ which is clearly independent of $n$ and we have the result.
\end{proof}

\begin{lema}\label{Lema3}
Every $\dot{y}_n$ is Lipschitz with Lipschitz constant independent of $n$. In particular, the sequence $(\dot{y}_n)_{n\in\N}$ is equicontinuous and uniformly bounded.
\end{lema}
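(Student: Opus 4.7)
The strategy is to exploit the weakly logarithmic hypothesis to tame the high-frequency term $\ddot{x}_{\varepsilon_n}$ precisely where it couples with $V$. Since $V$ is locally Lipschitz on the compact region of Corollary \ref{Cor1} and $x_{\varepsilon_n}$ is at least $C^2$ (as $U$ is smooth), the function $\dot{y}_n$ is Lipschitz in $\tau$, with a constant depending on $n$ a priori, and in particular absolutely continuous. My goal therefore reduces to producing an almost everywhere bound $|\ddot{y}_n(\tau)|\leq C$ with $C$ independent of $n$.

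Working in local coordinates near $p$, I will write $\dot{y}_n(\tau)=g_{ij}(x_{\varepsilon_n}(\tau))\,V^i(x_{\varepsilon_n}(\tau))\,\dot{x}_{\varepsilon_n}^j(\tau)$ and differentiate the product. This produces three kinds of contributions: terms in which the derivative hits the metric coefficients, controlled by smoothness of $g$ and the uniform velocity bound of Lemma \ref{Lema1}; terms in which it hits $V\circ x_{\varepsilon_n}$, bounded almost everywhere by $k\,\Vert V(p)\Vert^2$ where $k$ is the Lipschitz constant of $V$ on the compact set (note that $V^i\circ x_{\varepsilon_n}$ is Lipschitz in $\tau$ with constant $k\,\Vert V(p)\Vert$, hence a.e.\ differentiable with bounded derivative); and the potentially dangerous contraction $V^i(x_{\varepsilon_n})\,g_{ij}(x_{\varepsilon_n})\,\ddot{x}_{\varepsilon_n}^j$, in which $\ddot{x}_{\varepsilon_n}$ diverges as $\varepsilon_n\to 0^+$.

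The heart of the argument is to handle this last term via the Euler--Lagrange equation for $L_{\varepsilon_n}$, which in coordinates takes the form
$$g_{ij}(x_{\varepsilon_n})\,\ddot{x}_{\varepsilon_n}^j\,=\,-\varepsilon_n^{-2}\,\partial_i U(x_{\varepsilon_n})\,+\,R_i(x_{\varepsilon_n},\dot{x}_{\varepsilon_n}),$$
where $R_i$ is a quadratic-in-$\dot{x}_{\varepsilon_n}$ Christoffel-type expression, uniformly bounded on the region of Corollary \ref{Cor1}. Contracting with $V^i(x_{\varepsilon_n})$ converts the singular piece into $-\varepsilon_n^{-2}\,V(U)(x_{\varepsilon_n})=-\varepsilon_n^{-2}\,P(x_{\varepsilon_n})\,U(x_{\varepsilon_n})$ by the weakly logarithmic identity $V(U)=P\,U$. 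Lemma \ref{Lema1} then gives $\varepsilon_n^{-2}\,U(x_{\varepsilon_n}(\tau))\leq \Vert V(p)\Vert^2/2$ and $P$ is bounded on the compact region by definition of weakly logarithmic, so this combination is uniformly bounded in $n$ and $\tau$. This is precisely where the definition of weakly logarithmic field is engineered to work.

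Assembling the three bounds yields $|\ddot{y}_n(\tau)|\leq C$ for almost every $\tau\in[-T,T]$ with $C$ independent of $n$, and absolute continuity of $\dot{y}_n$ then delivers the desired uniform Lipschitz estimate, whence equicontinuity. Uniform boundedness is then immediate from the Lipschitz bound together with the common initial value $\dot{y}_n(0)=\Vert V(p)\Vert^2$. The main conceptual obstacle is the realization that, although $\ddot{x}_{\varepsilon_n}$ individually blows up as $\varepsilon_n\to 0^+$, its projection $\langle V,\nabla_{\dot{x}_{\varepsilon_n}}\dot{x}_{\varepsilon_n}\rangle=-\varepsilon_n^{-2}V(U)$ is a scalar quantity precisely controlled by the weakly logarithmic hypothesis combined with the energy conservation of Lemma \ref{Lema1}; once this is identified, the rest is bookkeeping.
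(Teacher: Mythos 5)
Your argument is correct and reaches the same key reduction as the paper: the dangerous contraction $g_{ij}V^i\ddot{x}^j_{\varepsilon_n}$ is converted by the Euler--Lagrange equation into a Christoffel term plus $-\varepsilon_n^{-2}V(U)=-\varepsilon_n^{-2}PU$, which Lemma \ref{Lema1} and the boundedness of $P$ on compacts render uniformly bounded. Where you diverge from the paper is in how you handle the fact that $V$ is only locally Lipschitz, not $C^1$. You invoke Rademacher-type reasoning: $\dot{y}_n$ is Lipschitz on $[-T,T]$, hence absolutely continuous and a.e.\ differentiable, so a uniform a.e.\ bound on $\ddot{y}_n$ suffices; the term where the derivative falls on $V\circ x_{\varepsilon_n}$ is controlled by the Lipschitz constant of the composition. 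The paper instead regularizes $V$ by convolution with a $C^1$ approximation of unity $(u_\eta)$, differentiates the mollified quantity $h_\eta=g_{ab}V^a_\eta\dot{x}^b_{\varepsilon_n}$ classically, integrates, and passes to the limit $\eta\to 0^+$, obtaining the Lipschitz estimate in integral form without ever appealing to a.e.\ differentiability of the unmollified field. Your route is shorter and conceptually cleaner, at the cost of assuming the standard facts about absolutely continuous functions and a.e.\ derivatives of Lipschitz compositions; the paper's mollification argument is more self-contained and is why it sets up the \emph{Preliminaries on approximations of unity} subsection beforehand. Both are sound, and both isolate the weakly logarithmic identity as the decisive input.
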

\begin{proof}
Let $n$ be a natural number. Because $[-T, T]$ is compact, it is enough to prove that for every $\tau$ in this interval there is $\delta>0$ such that $\dot{y}_n$ restricted to $[\tau-\delta, \tau+ \delta]$ is Lipschitz with Lipschitz constant independent of $n$.

Let $\tau$ be in $[-T, T]$ and $(W,\psi)$ be a coordinate neighbourhood of $x_{\varepsilon_n}(\tau)$ in $M$ small enough such that $W\subset E$ and $V|_W$ is Lipschitz. There is $\delta>0$ such that the segment of the curve $x_{\varepsilon_n}$ on $[\tau-\delta, \tau+ \delta]$ is fully contained in $W$.

With respect to these coordinates, the metric $\rho$, the field $V$ and the velocity read as follows:
$$(\psi^{-1})^{*}(\rho) = g_{ab}\,dw^{a}\otimes dw^{b},\quad V_*= \psi_*(V)= V^{a}\,\partial_{a},\quad \psi_*(\dot{x}_{\varepsilon_n})= \dot{x}_{\varepsilon_n}^{a}\,\partial_{a}$$
where $(x_{\varepsilon_n}^{a})= \psi(x_{\varepsilon_n})$ are the coordinates of the curve and $w_1,\ldots, w_n$ are the coordinates of $(W,\psi)$. The coordinates of the curve verify the Euler-Lagrange equations
\begin{equation}\label{Euler-Lagrange}
\ddot{x}_{\varepsilon_n}^{a} + \Gamma_{ij}^{a}\, \dot{x}^{i}_{\varepsilon_n}\, \dot{x}^{j}_{\varepsilon_n}
+ \varepsilon_n^{-2}\,g^{ab}\, \partial_b \tilde{U}=0
\end{equation}
where $\Gamma_{ij}^{a}$ are the Christoffel symbols of the second kind and $\tilde{U}= U\circ\psi^{-1}$. Note that $V_*$ is weakly logarithmic for $\tilde{U}$:
$$V_*(\tilde{U})= V(U)\circ \psi^{-1}= (P\,U)\circ\psi^{-1}= \tilde{P}\,\tilde{U}$$
where we have defined $\tilde{P}= P\circ\psi^{-1}$ which is clearly bounded over compact sets since $\psi$ is a diffeomorphism over its image.

In particular, on the interval $[\tau-\delta, \tau+ \delta]$ we have
$$\dot{y}_n= g_{ab}\,V^{a}\, \dot{x}^{b}_{\varepsilon_n}$$
where the first two factors are evaluated over the coordinates of the curve. We will show that this function is Lipschitz.

Consider a $C^{1}$ approximation of unity $(u_\eta)_{\eta >0}$ and suppose without loss of generality that $\psi(W)_\eta$ contains the segment of the curve $\psi(x_{\varepsilon_n})$ on $[\tau-\delta, \tau+ \delta]$ for every $\eta>0$. Define $h_\eta: [\tau-\delta, \tau+ \delta]\rightarrow \R$ by the expression 
$$h_\eta= g_{ab}\,V^{a}_\eta\, \dot{x}^{b}_{\varepsilon_n},\qquad V_\eta^{a}= u_\eta * V^{a}, \quad \eta>0$$
where again the first two factors of the left hand expression are evaluated over the coordinates of the curve. The functions $h_\eta$ and $V_\eta^{a}$ uniformly converge to $\dot{y}_n$ and $V^{a}$ respectively on $[\tau-\delta, \tau+ \delta]$ as $\eta\to 0^{+}$. If $V^{a}$ has Lipschitz constant $k$, then
\begin{equation}\label{Bound}
|\,\partial_b\,V_\eta^{a}\,|\leq k, \qquad \eta>0.
\end{equation}

For every $\eta>0$, the function $h_\eta$ is $C^{1}$ and its derivative reads as follows
$$\dot{h}_\eta= g_{ab,c}\,V^{a}_\eta\, \dot{x}^{b}_{\varepsilon_n}\, \dot{x}^{c}_{\varepsilon_n}
+ g_{ab}\,\partial_c V^{a}_\eta\, \dot{x}^{b}_{\varepsilon_n}\, \dot{x}^{c}_{\varepsilon_n}
+ g_{ab}\,V^{a}_\eta\, \ddot{x}^{b}_{\varepsilon_n}$$

By Lemma \ref{Lema1} and relation \eqref{Bound}, the first two terms are bounded over $[\tau-\delta, \tau+ \delta]$ by a constant $k'$ independent of $n$ and $\eta>0$ hence after integrating we have
$$\left| h_\eta(t_2)-h_\eta(t_1)-\int_{t_1}^{t_2}\,g_{ab}\,V^{a}_\eta\, \ddot{x}^{b}_{\varepsilon_n}\right|\leq k'\, |t_2-t_1|.$$
Taking the limit as $\eta\to 0^{+}$ in the expression above gives
\begin{equation}\label{triangle1}
\left| \dot{y}_n(t_2)-\dot{y}_n(t_1)-\int_{t_1}^{t_2}\,g_{ab}\,V^{a}\, \ddot{x}^{b}_{\varepsilon_n}\right|\leq k'\, |t_2-t_1|.
\end{equation}
The integrand of the integral in the expression above is bounded by a constant independent of $n$. In effect, by the Euler-Lagrange equations \eqref{Euler-Lagrange} we have the expression
\begin{eqnarray*}
g_{ab}\,V^{a}\, \ddot{x}^{b}_{\varepsilon_n}&=& -[ij,a]\, \dot{x}^{i}_{\varepsilon_n}\, \dot{x}^{j}_{\varepsilon_n}\, V^{a}
- \varepsilon_n^{-2}\,V^{a}\, \partial_a \tilde{U} \\
&=& -[ij,a]\, \dot{x}^{i}_{\varepsilon_n}\, \dot{x}^{j}_{\varepsilon_n}\, V^{a}
- \varepsilon_n^{-2}\,\tilde{P}\,\tilde{U}
\end{eqnarray*}
and the right hand side is bounded since every coefficient is so on $\psi(W)$, $V_*$ is weakly logarithmic for $\tilde{U}$ and the potential evaluated over curve times $\varepsilon_n^{-2}$ is bounded by a constant independent of $n$ by Lemma \ref{Lema1} again. Therefore, there is a constant $k''$ independent of $n$ such that
\begin{equation}\label{triangle2}
\left| \int_{t_1}^{t_2}\,g_{ab}\,V^{a}\, \ddot{x}^{b}_{\varepsilon_n}\right|\leq 
\int_{t_1}^{t_2}\,\left| g_{ab}\,V^{a}\, \ddot{x}^{b}_{\varepsilon_n}\right|\leq k''\,|t_2-t_1|.
\end{equation}
By triangle inequality and expressions \eqref{triangle1} and \eqref{triangle2} we finally get 
$$\left| \dot{y}_n(t_2)-\dot{y}_n(t_1) \right|\leq k'''\, |t_2-t_1|$$
for every $t_2$ and $t_1$ in $[\tau-\delta, \tau+ \delta]$ where we have defined $k'''= k'+k''$ which is clearly independent of $n$ and this finishes the proof.
\end{proof}

\begin{lema}\label{Lema4}
There is a convergent subsequence of $(y_n)$ in the space $C^{1}[-T, T]$ whose limit $y$ satisfies $y(0)=0$ and $\dot{y}(0)= ||V(p)||^{2}$.
\end{lema}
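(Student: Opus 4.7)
The plan is a straightforward double application of the Arzel\`a--Ascoli theorem. By Lemma \ref{Lema2} the sequence $(y_n)$ is equicontinuous and uniformly bounded on $[-T,T]$, so I would first extract a subsequence converging uniformly to some continuous function $y$ on $[-T,T]$. By Lemma \ref{Lema3} the same properties hold for the derivative sequence $(\dot{y}_n)$, so passing to a further subsequence (which I keep denoting $(y_n)$ for brevity) I obtain a continuous function $z$ on $[-T,T]$ with $\dot{y}_n\to z$ uniformly.

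Next I would identify $z$ with $\dot{y}$. Since each $y_n$ is $C^1$ with $y_n(0)=0$ by its defining integral, the Fundamental Theorem of Calculus gives $y_n(t)=\int_0^t \dot{y}_n(s)\,ds$, and uniform convergence $\dot{y}_n\to z$ allows exchanging limit and integral to yield $y(t)=\int_0^t z(s)\,ds$. Hence $y$ is $C^1$ with $\dot{y}=z$, and the two uniform convergences assemble into convergence of $(y_n)$ to $y$ in $C^1[-T,T]$.

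Finally, the boundary values fall out by evaluating at $t=0$. Plainly $y(0)=\lim_n y_n(0)=0$. Moreover, the initial conditions $x_{\varepsilon_n}(0)=p$ and $\dot{x}_{\varepsilon_n}(0)=V(p)$ hold for every $n$ by construction, so the integrand defining $y_n$ equals $\langle V(p),V(p)\rangle_p=\Vert V(p)\Vert^2$ at $t=0$, giving $\dot{y}_n(0)=\Vert V(p)\Vert^2$ for every $n$ and therefore $\dot{y}(0)=\Vert V(p)\Vert^2$. No genuine obstacle arises at this step; the delicate work has already been carried out in Lemmas \ref{Lema2} and \ref{Lema3}, and this lemma is essentially the harvest of those two equicontinuity estimates.
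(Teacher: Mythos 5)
Your proof is correct and follows essentially the same route as the paper's: two applications of Arzel\`a--Ascoli (to $(y_n)$ via Lemma \ref{Lema2}, and to $(\dot y_n)$ via Lemma \ref{Lema3}), followed by the Fundamental Theorem of Calculus with an interchange of limit and integral to identify the limit of $(\dot y_n)$ as $\dot y$, and evaluation at $t=0$ using the initial conditions $x_{\varepsilon_n}(0)=p$, $\dot x_{\varepsilon_n}(0)=V(p)$. There is no substantive difference between your argument and the one in the paper.
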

\begin{proof}
Because $y_n(0)=0$ for every $n$ and by Lemma \ref{Lema2} the sequence $(y_n)$ is equicontinuous, by Arzel\`a-Ascoli Theorem there is a convergent subsequence of $(y_n)$ in the space $C^{0}[-T, T]$ uniformly converging to some continuous function $y$ which clearly satisfies $y(0)=0$

Because $\dot{y}_n(0)=||V(p)||^{2}$ for every $n$ and by Lemma \ref{Lema3} the sequence $(\dot{y}_n)$ is equicontinuous, by Arzel\`a-Ascoli Theorem there is a further convergent subsequence of $(\dot{y}_n)$ in the space $C^{0}[-T, T]$ uniformly converging to some continuous function $e$ which clearly satisfies $e(0)=||V(p)||^{2}$.

It rest to show that $y$ is in $C^{1}$ and $\dot{y}=e$ because in this case the subsequence converges to $y$ in $C^{1}[-T, T]$. In this respect we have
$$y_n(t)= \int_0^{t}\,d\tau\, \dot{y}_n(\tau)$$
and taking the limit as $n\to+\infty$ we get
$$y(t)= \int_0^{t}\,d\tau\, e(\tau)$$
hence $\dot{y}=e$ and in particular $\dot{y}(0)= e(0)= ||V(p)||^{2}$. This concludes the proof.
\end{proof}

\subsection{Proof of Theorem \ref{Instability}}

\begin{prop}\label{prop_instability}
The curve $x$ is not constant neither on $[0,T]$ nor on $[-T, 0]$.
\end{prop}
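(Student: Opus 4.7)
\smallskip

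The plan is to prove the proposition by contradiction, leveraging the $C^1$ convergence $y_n\to y$ with $\dot{y}(0)=\|V(p)\|^2>0$ established in Lemma \ref{Lema4}. Suppose, for instance, that $x$ is constant on $[0,T]$, i.e.\ $x(\tau)=p$ for all $\tau\in[0,T]$. Since $x_{\varepsilon_n}\to x$ uniformly on $[-T,T]$ by Corollary \ref{Cor3}, this forces $x_{\varepsilon_n}\to p$ uniformly on $[0,T]$. The goal is to derive $y\equiv 0$ on $[0,T]$, contradicting $\dot y(0)=\|V(p)\|^2>0$.

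To this end, I would fix a coordinate neighbourhood $(W,\psi)$ around $p$ and, for $n$ large enough, confine $x_{\varepsilon_n}|_{[0,T]}$ to $W$. Writing $\tilde x_{\varepsilon_n}=\psi\circ x_{\varepsilon_n}$, $V_*=\psi_*V$, and the metric $g_{ab}$ in these coordinates,
\begin{equation*}
\dot y_n(t)\,=\,g_{ab}(\tilde x_{\varepsilon_n}(t))\,V_*^a(\tilde x_{\varepsilon_n}(t))\,\dot{\tilde x}_{\varepsilon_n}^b(t).
\end{equation*}
The key step is to split this as
\begin{equation*}
\dot y_n(t)\,=\,g_{ab}(\tilde p)\,V_*^a(\tilde p)\,\dot{\tilde x}_{\varepsilon_n}^b(t)\,+\,R_n(t),
\end{equation*}
where $R_n(t)$ consists of differences of $g_{ab}V_*^a$ evaluated at $\tilde x_{\varepsilon_n}(t)$ versus $\tilde p$. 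By continuity of $g_{ab}V_*^a$ and the uniform convergence $\tilde x_{\varepsilon_n}\to\tilde p$, together with the uniform velocity bound $\|\dot x_{\varepsilon_n}\|\le\|V(p)\|$ from Lemma \ref{Lema1}, the remainder $R_n$ goes to zero uniformly on $[0,T]$.

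Integrating and using $x_{\varepsilon_n}(0)=p$, I would then obtain
\begin{equation*}
y_n(t)\,=\,g_{ab}(\tilde p)\,V_*^a(\tilde p)\,\bigl(\tilde x_{\varepsilon_n}^b(t)-\tilde p^b\bigr)\,+\,\int_0^t R_n(\tau)\,d\tau,
\end{equation*}
and both terms on the right tend to zero uniformly on $[0,T]$: the first because $\tilde x_{\varepsilon_n}\to\tilde p$ uniformly, the second because $R_n\to 0$ uniformly. Hence $y_n\to 0$ uniformly on $[0,T]$, so by the $C^1$ convergence $y_n\to y$ from Lemma \ref{Lema4} we conclude $y\equiv 0$ on $[0,T]$, whence $\dot y\equiv 0$ on $[0,T]$. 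This contradicts $\dot y(0)=\|V(p)\|^2>0$, which is nonzero by the hypothesis that $V(p)\neq{\bf 0}$ in the statement of Theorem \ref{Instability}. The argument for $[-T,0]$ is entirely analogous, reversing the sign of $t$. No serious obstacle is anticipated here; the content is essentially bookkeeping of uniform estimates in a chart, and the force of the proof lies entirely in having already secured $\dot y(0)=\|V(p)\|^2$ in Lemma \ref{Lema4}, which is where the weakly logarithmic hypothesis was actually used.
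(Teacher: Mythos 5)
Your proposal is correct and follows essentially the same argument as the paper: express $\dot y_n$ in a chart via the coefficient function $a\mapsto g_{ab}(a)V_*^a(a)$ (the paper calls its Riesz dual $A$), split off the value of this coefficient at $p$, integrate that constant part exactly to get a boundary term proportional to $\tilde x_{\varepsilon_n}(t)-\tilde p$, and bound the remainder by continuity of the coefficient together with the velocity bound. Both proofs then conclude $y\equiv 0$ on $[0,T]$, contradicting $\dot y(0)=\|V(p)\|^2>0$; the only detail you elide (converting the Lemma~\ref{Lema1} bound on $\|\dot x_{\varepsilon_n}\|_\rho$ into a Euclidean bound on $\dot{\tilde x}_{\varepsilon_n}$ via positive-definiteness of $\rho_*$ on a compact set) is spelled out in the paper but is routine.
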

\begin{proof}
Suppose that the curve $x$ is constant on $[0,T]$, that is to say $x(t)=p$ for every $t$ in $[0, T]$. The proof for the interval $[-T,0]$ is almost verbatim. Along the proof, unless otherwise stated all the curves will be restricted to the interval $[0,T]$.

Take the convergent subsequence $(y_n)$ in Lemma \ref{Lema4} with limit the function $y$. Let $(W,\psi)$ be a coordinate neighbourhood of $p$ in $M$ centered at $p$ and without loss of generality we may suppose that the image of the curves $x_{\varepsilon_n}$ are fully contained in a compact subset $K\subset W$. 

With respect to these coordinates, the metric $\rho$, the field $V|_W$ and the curve $x_{\varepsilon_n}$ read as follows:
$$\rho_*=(\psi^{-1})^{*}(\rho) = g_{ab}\,dw^{a}\otimes dw^{b},\qquad V_*= \psi_*(V)= V^{a}\,\partial_{a},\qquad z_n= \psi\circ x_{\varepsilon_n}$$
where $w_1,\ldots, w_n$ are the coordinates of $(W,\psi)$. Define the vector field $A$ on $\psi(W)$ as the Riesz dual of the one form $\langle\,V_*\,,\,\cdot\,\rangle_{\rho_*}$ with respect to the Euclidean metric. Explicitly,
$$A= \sum_{a=1}^{n}\,A_a\,\partial_a,\qquad A_a= g_{ab}\, V^{b}.$$
Then we have the following equivalent expression for the function $y_n$ but now, instead of using explicitly the metric $\rho$, we use the Euclidean metric:
$$y_n(t)= \int_0^{t}\,d\tau\,\left\langle\, A\left( z_n(\tau)\right),\,\dot{z}_n(\tau)\,\right\rangle_E.$$
The advantage of this alternative formulation is that it enables the following calculation
$$y_n(t)= \int_0^{t}\,d\tau\,\left\langle\, A({\bf 0}),\,\dot{z}_n(\tau)\,\right\rangle_E
+ \int_0^{t}\,d\tau\,\left\langle\, A\left( z_n(\tau)\right)- A({\bf 0}),\,\dot{z}_n(\tau)\,\right\rangle_E$$
$$= \left\langle\, A({\bf 0}),\,\int_0^{t}\,d\tau\,\dot{z}_n(\tau)\,\right\rangle_E
+ \int_0^{t}\,d\tau\,\left\langle\, A\left( z_n(\tau)\right)- A({\bf 0}),\,\dot{z}_n(\tau)\,\right\rangle_E$$
\begin{equation}\label{Truco}
= \left\langle\, A({\bf 0}),\,z_n(t)\,\right\rangle_E
+ \int_0^{t}\,d\tau\,\left\langle\, A\left( z_n(\tau)\right)- A({\bf 0}),\,\dot{z}_n(\tau)\,\right\rangle_E
\end{equation}
because $x_{\varepsilon_n}(0)=p$ and the coordinate neighbourhood is centered at $p$ hence $z_n(0)={\bf 0}$ for every natural $n$.

Consider the quadratic form $(v\mapsto \rho_*(v,v))$ on the tangent bundle $T\,\psi(W)$. Restricted to the unit sphere bundle $\pi: S\,\psi(W)\rightarrow \psi(W)$ it is a strictly positive continuous function and attains a minimum $m>0$ over the compact set $\pi^{-1}(K)$. In particular, by Lemma \ref{Lema1} we have
$$m\,\Vert \dot{z}_n(t) \Vert_E^{2}\leq \Vert \dot{x}_{\varepsilon_n}(t) \Vert_\rho^{2}\leq \Vert V(p) \Vert_\rho^{2}$$
and we have proved the relation
\begin{equation}\label{Truco2}
\Vert \dot{z}_n(t) \Vert_E \leq m^{-1/2}\,\Vert V(p) \Vert_\rho.
\end{equation}

By Cauchy-Schwartz inequality, expression \eqref{Truco} and the previous expression \eqref{Truco2} we have
$$\left|\,y_n(t) \,\right| \leq 
\Vert\, A({\bf 0})\,\Vert_E\, \Vert\,z_n(t)\,\Vert_E
+ m^{-1/2}\,\Vert V(p)\Vert_\rho\,\int_0^{t}\,d\tau\,\Vert\, A\left( z_n(\tau)\right)- A({\bf 0})\,\Vert_E.$$
Because the sequence $(z_n)$ converges uniformly to ${\bf 0}$ as $n\to +\infty$, we have that the sequence $(y_n)$ converges uniformly to the constant zero in the same limit. In particular, the limit $y$ equals the constant zero on $[0,T]$ which is absurd because $\Vert V(p)\Vert$ is not zero hence by Lemma \ref{Lema4} the function $y$ cannot be constant on this interval.

We have proved that the curve $x$ cannot be constant $[0,T]$ and this finishes the proof.
\end{proof}

\begin{proof}[Proof of Theorem \ref{Instability}]
By Proposition \ref{prop_instability}, there is a positive $t_*>0$ such that $x(t_*)$ isdistinct from $x(0)=p$. Define $d= dist_\rho(x(t_*), p)$. Then, there is a natural $n_0$ such that
$$dist_\rho(x(t_*), x_{\varepsilon_n}(t_*))<d/2,\qquad n\geq n_0$$
hence
$$x^{\varepsilon_n}(t_*/\varepsilon_n) = x_{\varepsilon_n}(t_*)\notin B_\rho(p,d/2),\qquad n\geq n_0.$$
However,
$$\left(\,x^{\varepsilon_n}(0)\,,\, \dot{x}^{\varepsilon_n}(0)\,\right)= \left(\,p\,,\, \varepsilon_n\,V(p)\,\right)\xrightarrow{\ \ n\to +\infty\ \ } (\,p\,,\,{\bf 0}\,)$$
therefore we conclude that the equilibrium point $(\,p\,,\,{\bf 0}\,)$ is Lyapunov unstable and we have the result.
\end{proof}


\section{Proof of Theorem \ref{main}}

Let $M$ be a real analytic manifold and consider the smooth mechanical Lagrangian in $TM$
\begin{equation}\label{Lagrangian3}
L(x,v)= Q_{x}(v)- U(x),\qquad (x,v)\in TM
\end{equation}
where $Q=(x\mapsto Q_{x})$ is a smooth section of positive definite quadratic forms and $U$ is a real analytic potential. Every critical point of the potential is an equilibrium point of the respective Lagrangian dynamics.

\begin{lema}\label{zeroes_are_critical}
Consider a nonnegative smooth potential $U$. Then, every zero potential point is critical.
\end{lema}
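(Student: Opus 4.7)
The plan is to observe that a zero of a nonnegative function is automatically a global minimum, and then invoke the standard fact that interior local minima of a smooth function on a manifold are critical points. More precisely, let $p$ in $M$ satisfy $U(p) = 0$. Since $U \geq 0$ everywhere on $M$, we have $U(p) \leq U(q)$ for all $q$ in $M$, so $p$ is a global (hence local) minimum of $U$.

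Next, I would work in a chart $(W, \varphi)$ around $p$ and consider the smooth function $\tilde U = U \circ \varphi^{-1}$ defined on the open set $\varphi(W) \subset \R^n$. Then $\varphi(p)$ is a local minimum of $\tilde U$, so for each coordinate direction $e_i$ the one-variable smooth function $t \mapsto \tilde U(\varphi(p) + t\, e_i)$ attains a local minimum at $t = 0$, which forces $\partial_i \tilde U(\varphi(p)) = 0$ by the elementary one-variable calculus fact. Therefore $d_{\varphi(p)} \tilde U = 0$, and pulling back along the chart gives $d_p U = 0$, i.e.\ $p$ is a critical point of $U$.

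There is no substantive obstacle here; the only mild point worth stating explicitly is the chart-independence of the notion of critical point, which is immediate from the chain rule since the transition maps are diffeomorphisms. The argument uses only the nonnegativity of $U$ and the smoothness needed to make sense of $d_p U$, so no hypothesis beyond what is stated is required.
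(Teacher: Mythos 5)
Your proof is correct, and it takes a genuinely different (and somewhat more elementary) route than the paper. The paper argues by contradiction: it assumes $p$ is a regular point, invokes the implicit function theorem (the submersion normal form) to write $U$ locally as a coordinate projection, and then observes that a projection takes negative values arbitrarily near $p$, contradicting $U \geq 0$. You instead note directly that $U(p) = 0 \leq U(q)$ for all $q$ makes $p$ a global minimum, and then apply the elementary one-variable first-order condition coordinate by coordinate to conclude $d_p U = 0$. Both arguments hinge on nonnegativity in the same essential way, but yours bypasses the implicit function theorem entirely and relies only on the first-derivative test, which is arguably cleaner; the paper's version is equivalent and just as short, but carries the overhead of citing the submersion normal form. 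Either is a perfectly acceptable proof of the lemma.
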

\begin{proof}
Consider a zero potential point $x$ and suppose that it is a regular point of the potential. By the implicit function theorem, there is a coordinate neighbourhood of $x$ centered at the origin such that the potential is simply the projection onto one of the coordinates. In particular, there are points whereat the potential is negative which is absurd by hypothesis.
\end{proof}

\begin{lema}\label{Lema_semifinal}
Consider a nonnegative real analytic potential $U$ on a three dimensional real analytic manifold $M$. Let $X$ be the set of limit points of the zero locus of $U$, that is to say its derived set. Then, there is an open dense subset of $X$ such that every one of its points is a Lyapunov unstable equilibrium point of the Lagrangian dynamics of \eqref{Lagrangian3}.
\end{lema}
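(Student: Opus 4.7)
The plan is to combine the two extension theorems of Section~2 with the instability criterion of Theorem~\ref{Instability}. Since $\dim M = 3$ and $U$ is not identically zero (otherwise the statement is vacuous), every regular component of $\mathcal{V}(U)$ has dimension $0$, $1$, or $2$: a three-dimensional regular component would force $U\equiv 0$ by the analytic continuation principle. The zero-dimensional regular components are isolated zeros of $U$ and thus lie outside the derived set $X$. By Theorem~\ref{smooth_points}, the singular locus $\Sigma\mathcal{V}(U)$ is nowhere dense in $\mathcal{V}(U)$, and hence also in $X$. Consequently the union $X_{12} = \mathring{\mathcal{V}(U)}^{(1)}\cup \mathring{\mathcal{V}(U)}^{(2)}$ is open and dense in $X$, and it suffices to exhibit an open dense subset of $X_{12}$ consisting of Lyapunov unstable equilibria.

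For every two-dimensional (codimension one) regular component $X'$, Theorem~\ref{Theorem_codim1} supplies an open set $A\subset M$ with $A\cap\mathcal{V}(U)=X'$ and a closed nowhere dense $S'\subset X'$ such that any smooth vector field tangent to $X'$ on an open $B\subset A$ is logarithmic for $U$ on $B-S'$. Given $p\in X'-S'$, I would pick a small open $B\subset A$ containing $p$ and any smooth vector field $V$ on $B$ tangent to $B\cap X'$ with $V(p)\neq 0$, which exists by a local coordinate construction adapted to $X'$. Then on the open neighborhood $B-S'$ of $p$ the field $V$ is smoothly, and therefore locally Lipschitz weakly, logarithmic for $U$, and $V(p)\neq 0$; Theorem~\ref{Instability} gives Lyapunov instability at $p$. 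Hence every point of $X'-S'$ is unstable, and $X'-S'$ is open and dense in $X'$.

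For every one-dimensional (codimension two) regular component $X'$, I restrict similarly to an open $A$ with $A\cap \mathcal{V}(U)=X'$. Since a connected one-dimensional manifold is diffeomorphic to $\R$ or $S^1$, there is a nowhere vanishing smooth tangent vector field $F$ on $X'$. Theorem~\ref{Theorem_codim2} then furnishes a closed nowhere dense $S''\subset X'$ such that for each $p\in X'-S''$ the field $F$ admits a locally Lipschitz weakly logarithmic for $U$ extension $\widehat F$ to a neighborhood $W_p$ of $p$ in $M$. Since $\widehat F|_{X'}=F$ does not vanish, $\widehat F(p)\neq 0$, so Theorem~\ref{Instability} once again yields Lyapunov instability at $p$.

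Assembling the pieces, let $\mathcal{U}$ be the union of the sets $X'-S'$ over all two-dimensional regular components and the sets $X'-S''$ over all one-dimensional regular components. Every point of $\mathcal{U}$ is Lyapunov unstable. Each regular component $X'$ is open in $X$ because connected components of the smooth stratum $\mathring{\mathcal{V}(U)}^{(d)}$ are open in that stratum, and $\mathring{\mathcal{V}(U)}^{(d)}\subset X$ is open in $X$ for $d=1,2$ by Theorem~\ref{smooth_points}; thus $\mathcal{U}$ is open in $X$, and it is dense in $X_{12}$ since each bad set is nowhere dense in its component, and $X_{12}$ is dense in $X$. The substantive work has already been done in Theorems~\ref{Theorem_codim1}, \ref{Theorem_codim2}, and~\ref{Instability}; the only content specific to this lemma is the three-dimensional bookkeeping that exhausts all possible codimensions of regular components and the trivial construction of tangent fields non-null at the point.
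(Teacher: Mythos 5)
Your handling of the dimension-one and dimension-two regular components matches the paper's proof: restrict to an open set meeting $\mathcal{V}(U)$ in a single regular component, build a nowhere-vanishing tangent field (from the classification of one-manifolds, or from adapted coordinates for a hypersurface), invoke Theorem~\ref{Theorem_codim2} or Theorem~\ref{Theorem_codim1} to obtain the locally Lipschitz weakly logarithmic field off a closed nowhere dense set, and apply Theorem~\ref{Instability}. Your assembly into an open dense subset of $X$ is also fine.

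The gap is your dismissal of three-dimensional regular components. You write that ``a three-dimensional regular component would force $U\equiv 0$ by the analytic continuation principle'' and that in this case ``the statement is vacuous.'' Neither half is correct. First, the analytic continuation principle (Theorem~\ref{analytic_continuation}) applies \emph{per connected component} of $M$, so even if $U$ is not identically zero globally, $U$ can vanish on an entire connected component of $M$ and produce a three-dimensional regular component; $M$ is not assumed connected. Second, and more importantly, the case $U\equiv 0$ (on a component, or everywhere) is not vacuous: then $\mathcal{V}(U)$ contains that entire open component, its derived set $X$ is nonempty, and the lemma is asserting a genuine open-dense instability statement there. This case really does arise when the lemma is applied in the proof of Theorem~\ref{main} to the shifted potential $U-c$ on $Y_c$, precisely at a plateau of non-strict minima. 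The paper treats this third case separately: on a three-dimensional regular component $U$ vanishes identically, so the Lagrangian dynamics there reduces to (reparameterized) geodesic flow for the metric given by polarization of the kinetic term, and every zero-velocity point is Lyapunov unstable because arbitrarily small nonzero velocities yield geodesics that leave any bounded position neighbourhood. Your proof needs this case to be complete.
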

\begin{proof}
By the previous Lemma \ref{zeroes_are_critical}, every point in $X$ is an equilibrium point and it rest to show the existence of an open dense subset of $X$ of Lyapunov unstable points.

By Theorem \ref{smooth_points}, the subset $\mathring{\mathcal{V}(U)}$ of smooth points in the zero locus $\mathcal{V}(U)$ is open and dense in the locus and splits in $d$-dimensional real analytic submanifolds $\mathring{X}^{(d)}$,
$$\mathring{\mathcal{V}(U)}\,=\,\mathring{X}^{(0)}\sqcup\mathring{X}^{(1)}\sqcup\mathring{X}^{(2)}\sqcup \mathring{X}^{(3)}.$$

The set of isolated points of the zero locus $\mathcal{V}(U)$ is $\mathring{X}^{(0)}$. These points are strict minimums of the potential hence by the Lagrange\,-\,Dirichlet Theorem they are Lyapunov stable. In particular,
$$X=\mathcal{V}(U)-\mathring{X}^{(0)}$$
hence the subset $\mathring{X}$ of smooth points of $X$ is open and dense and splits in $d$-dimensional real analytic submanifolds $\mathring{X}^{(d)}$,
$$\mathring{X}\,=\,\mathring{X}^{(1)}\sqcup\mathring{X}^{(2)}\sqcup \mathring{X}^{(3)}.$$

Now we prove that every regular component of $X$, that is to say every connected component of $\mathring{X}$, has an open and dense set of Lyapunov unstable points. We consider each dimension separately.
\bigskip

\noindent\textbf{Dimension one:} If $\mathring{X}^{(1)}$ is empty, then there is nothing to prove. Otherwise, consider a non empty connected component $X_1$ of $\mathring{X}^{(1)}$. Then, $X_1$ is a one dimensional submanifold of $M$ hence there is a non null vector field $F_1$ defined on $X_1$ and tangent to it. By Theorem \ref{Theorem_codim2} there is an open and dense subset $A_1$ of $X_1$ such that every point in $A_1$ has a neighbourhood in $M$ whereat $F_1$ admits a locally Lipschitz continuous weakly logarithmic for $U$ extension. Therefore, by Theorem \ref{Instability} every point in $A_1$ is Lyapunov unstable.
\bigskip

\noindent\textbf{Dimension two:} If $\mathring{X}^{(2)}$ is empty, then there is nothing to prove. Otherwise, consider a non empty connected component $X_2$ of $\mathring{X}^{(2)}$. By Theorem \ref{Theorem_codim1}, there is an open subset $A\subset M$ and a closed nowhere dense zero Lebesgue measure subset $S\subset X_2$ such that
\begin{enumerate}
\item $A\cap \mathcal{V}(U)=X_2$.
\item Every smooth vector field defined on some open subset $B\subset A$ and tangent to $X_2$ at $B\cap X_2$, possibly empty, is smoothly logarithmic for $U$ in $B-S$.
\end{enumerate}

For every point $p$ in $X_2-S$ there is a neighbourhood $W_p$ of $p$ in $A$ and a non vanishing smooth vector field $V$ defined on $W_p$ and tangent to $X_2$ at $W_p\cap X_2$. Indeed, there is a coordinate neighbourhood $(W_p,\,\varphi_p)$ of $p$ in $A$ such that
$$\varphi(W_p\cap X_2)\,=\,\varphi(W_p)\cap [x_3\,=\,0]$$
where $x_1,\,x_2,\,x_3$ are the canonical coordinates of $\R^3$. Define the vector field $V$ on $W_p$ as the pushout of the first canonical vector field by $\varphi_p^{-1}$, that is to say
$$V\,=\,(\varphi_p^{-1})_*({\bf e_1}).$$

Therefore, by Theorem \ref{Instability} every point in the open and dense subset $A_2=X_2-S$ of $X_2$ is Lyapunov unstable.
\bigskip

\noindent\textbf{Dimension three:} If $\mathring{X}^{(3)}$ is empty, then there is nothing to prove. Otherwise, consider a non empty connected component $X_3$ of $\mathring{X}^{(3)}$. By Theorem \ref{analytic_continuation}, the analytic continuation principle, $U$ is identically zero on $X_3$ hence every point in this component is Lyapunov unstable since the non constant motions are reparameterizations of geodesics with respect to the metric induced by the kinetic term through polarization.
\bigskip

We have proved the existence of an open and dense subset in $\mathring{X}$ of Lyapunov unstable points which is open and dense in $X$ as well. This concludes the proof.
\end{proof}

\begin{proof}[Proof of Theorem \ref{main}]
By definition, a point $p$ such that $U(p)=c$ is a minimum of $U$ if and only if
$$p\,\in\,\mbox{Int}\left([\,U\geq c\,]\right)\cap [\,U=c\,]\,=\,Y_c\cap [\,U=c\,]\,=\,Z_c$$
where we have denoted by $Y_c$ the interior of $[\,U\geq c\,]\subset M$. Moreover, the isolated points of $Z_c$ are the the strict minimums points with potential $c$ while the limit points of $Z_c$ are the non strict minimums with potential $c$, that is to say
$$Z_c'\,=\,\left\lbrace\,p\ \mbox{is a non strict minimum of}\ U\ |\ U(p)=c\,\right\rbrace$$
where $Z_c'$ denotes the derived set of $Z_c$, the set of limit points of $Z_c$.

Applying the previous Lemma \ref{Lema_semifinal} to the potential $U-c$ on the three dimensional real analytic manifold $Y_c$, we have that there is an open and dense subset $A_c\subset Z_c'$ whose points are Lyapunov unstable equilibrium points of the Lagrangian dynamics of \eqref{Lagrangian3} because the dynamics of \eqref{Lagrangian3} with $U$ is the same as the one with $U-c$.

If $Z_c'$ is non empty, then applying Lemma \ref{zeroes_are_critical} to $U-c$ we have that $c$ is a critical value of $U$ hence
$$\bigsqcup_{c\,\in\, U(\Sigma\, U)}\, Z_c'\,=\,\left\lbrace\,p\ \mbox{is a non strict minimum of}\ U\,\right\rbrace.$$

Finally, it is clear then that the subset
$$A\,=\,\bigsqcup_{c\,\in\, U(\Sigma\, U)}\, A_c\,\subset\,\bigsqcup_{c\,\in\, U(\Sigma\, U)}\, Z_c'$$
is open and dense in the set of non strict minimums of $U$ and every one of its points is a Lyapunov unstable equilibrium of the Lagrangian dynamics of \eqref{Lagrangian3}. This concludes the proof.
\end{proof}

\section{Acknowledgments}
The authors would like to express a very special thank to Mark Spivakovsky for several suggestions and explanations.

\end{document}